\let\oldsqrt\sqrt
\def\sqrt{\mathpalette\DHLhksqrt}
\def\DHLhksqrt#1#2{%
\setbox0=\hbox{$#1\oldsqrt{#2\,}$}\dimen0=\ht0
\advance\dimen0-0.2\ht0
\setbox2=\hbox{\vrule height\ht0 depth -\dimen0}%
{\box0\lower0.4pt\box2}}
\newcommand{\R}{\mathbb{R}} % reelle Zahlen
\newcommand{\N}{\mathbb{N}} % natuerliche Zahlen
\newcommand{\supp}{\textnormal{supp}} % supp ...
\renewcommand{\phi}{\varphi}
\newcommand{\cE}{{\mathcal E}}
\newcommand{\cG}{{\mathcal G}}
\newcommand{\cH}{{\mathcal H}}
\newcommand{\cL}{{\mathcal L}}
\newcommand{\Ds}{{(-\Delta)}^s}
\newcommand{\dn}{D}
\newcommand{\ch}{\mathcal H}
\theoremstyle{definition}
\newtheorem{defi}{Definition}[section]
\newtheorem{remark}[defi]{Remark}
\theoremstyle{plain} %default%plain
\newtheorem{thm}[defi]{Theorem}
\newtheorem{prop}[defi]{Proposition}
\newtheorem{lemma}[defi]{Lemma}
\newtheorem{cor}[defi]{Corollary}
\theoremstyle{definition}
\numberwithin{equation}{section} 
\title{
Integral representation of solutions to higher-order fractional Dirichlet problems on balls
}
\author{
Nicola Abatangelo\footnote{D\'{e}partement de math\'{e}matique, Universit\'{e} Libre de Bruxelles CP 214, Boulevard du Triomphe, 1050 Ixelles, Belgium, nicola.abatangelo@ulb.ac.be}, 
\ Sven Jarohs\footnote{Institut f\"ur Mathematik, Goethe-Universit\"at, Frankfurt, Robert-Mayer-Stra\ss e 10, D-60054 Frankfurt, Germany, jarohs@math.uni-frankfurt.de}, and
Alberto Salda\~{n}a\footnote{Institut f\"ur Analysis, Karlsruhe Institute for Technology, Englerstra\ss e 2, 76131, Karlsruhe, Germany, \newline alberto.saldana@partner.kit.edu}
}
\date{}
\begin{document}

\maketitle

\begin{abstract}
We provide closed formulas for (unique) solutions of nonhomogeneous Dirichlet problems on balls involving any positive power $s>0$ of the Laplacian.  We are able to prescribe values outside the domain and boundary data of different orders using explicit Poisson-type kernels and a new notion of higher-order boundary operator, which recovers normal derivatives if $s\in\N$. Our results unify and generalize previous approaches in the study of polyharmonic operators and fractional Laplacians.  As applications, we show a novel characterization of $s$-harmonic functions in terms of Martin kernels, a higher-order fractional Hopf Lemma, and examples of positive and sign-changing Green functions. 

\end{abstract}

\section{Introduction}

The study of explicit solutions and representation formulas for differential operators is a very classical and important problem in PDEs, which dates back to \cite{P1820,green} for the Laplace operator. In general, this kind of expressions are powerful tools to obtain a wide range of qualitative properties\textemdash symmetry, a priori bounds, regularity\textemdash and precise quantitative estimates for solutions and its derivatives.  In this work, we study such explicit expressions involving any positive power of the Laplacian, \emph{i.e.,} for $(-\Delta)^s$ with $s>0$ in the unitary ball $B$ complemented with suitable (Dirichlet) boundary conditions. This paper complements and extends our previous work on higher-order fractional Laplacians \cite{AJS16a,AJS16b} (an extended preprint can be found in \cite{AJS16}, see also \cite{ADFJS18} for the half-space case).

One of the main difficulties in this setting is that for $s\not\in \N$ the fractional Laplacian $(-\Delta)^s$ becomes a nonlocal operator.
The pointwise notion of the $s$-Laplacian is a subtle issue, see Remark \ref{pointwise:rmk} below. For our main results, we use the following definition: for $N\in\N$, $m\in\N_0$, $\sigma\in(0,1)$, and $s=m+\sigma$, we write $(-\Delta)^s u$ to denote $(-\Delta)^m(-\Delta)^\sigma u$, where 
\begin{equation}\label{fractional}
(-\Delta)^{\sigma}u(x)=c_{N,\sigma} \lim_{\epsilon\to 0^+}\int_{\R^N\backslash\overline{B}_\epsilon(0)} \frac{u(x)-u(x+y)}{|y|^{N+2\sigma}}\ dy\qquad \text{ for }\  x\in \R^N,
\end{equation}
and $c_{N,\sigma}$ is a positive normalization constant, see \eqref{cnsigma}. To guarantee integrability in \eqref{fractional}, we require a growth condition at infinity encoded in the space
\begin{align*}%\label{R:space}
 \cL^1_{\sigma}:=\left\{u\in L^1_{loc}(\R^N)\;:\; \|u\|_{\cL^1_{\sigma}}<\infty \right\}, \qquad \text{ where }\quad \|u\|_{\cL^1_{\sigma}}:=\int_{\R^N}\frac{|u(x)|}{1+|x|^{N+2\sigma}}\ dx,
\end{align*}
see  \cite{FW16,S07,BB99}.  Thus, $(-\Delta)^\sigma u$ is well defined in $B$ if, for example, $u\in C^{2\sigma+\alpha}(B)\cap\cL_\sigma^1$ for some $\alpha>0$. For the physical and mathematical importance of $(-\Delta)^s$ we refer to \cite{AJS16a} and the references therein.

As studied in \cite{B1905,BGR61,GGS10,DG2016,AJS16b}, the Green function for $(-\Delta)^s$ in $B$ is given by Boggio's formula
\begin{equation}\label{green}
G_s(x,y)\ =\ k_{N,s} {|x-y|}^{2s-N}\int_0^{\rho(x,y)}\frac{t^{s-1}}{(t+1)^{\frac{N}{2}}}\ dt,
\qquad s>0,\ \ \ x,y\in \R^N,\ \ \ x\neq y,
\end{equation}
where $\rho(x,y)=\delta(x)\delta(y)|x-y|^{-2}$ for $x,y\in \R^N,$ $x\neq y$, $\delta(x):=(1-|x|^2)_+$, and $k_{N,s}$ is a positive normalization constant given in \eqref{c} below. 

Using \eqref{green}, one can obtain unique representation formulas for suitable functions, for example, an element $u\in C^{2,0}(B)\cap C^{1,0}(\overline{B})$ (see Section \ref{Notation} below to clarify notation) is represented by
\begin{equation}\label{poisson-green-classical}
u(x)=\int_{B} G_1(x,y)(-\Delta) u(y)\;dy- \int_{\partial B} \partial_{\nu}G_1(x,z)\,u(z)\;dz \quad\text{ for $x\in B$.}
\end{equation}
The term $-\partial_{\nu}G_1$ is called the \emph{Poisson kernel} (for $-\Delta$ in $B$) \cite{P1820} and $\partial_\nu$ denotes the normal derivative. This kernel can also be used to obtain  the \emph{harmonic extension} of a given boundary condition (b.c.) $g\in C(\partial B)$, that is, the function
\begin{align*}
u(x)=- \int_{\partial B}\partial_{\nu}G_1(x,z)\,g(z)\;dz\qquad \text{ for } x\in B
\end{align*}
is a solution of $-\Delta u = 0$ in $B$ and $u=g$ on $\partial B$.

The notion of Poisson kernel can be extended to other operators, for example, to the polyharmonic operator $(-\Delta)^m$, $m\in\N$, or to the fractional Laplacian $(-\Delta)^s$, $s\in(0,1)$, where similarities but also fundamental differences appear. Consider, for instance, the \emph{bilaplacian} $\Delta^2=(-\Delta)\circ(-\Delta)$.  As a higher-order operator, additional b.c. are required in order to obtain well-posedness of linear problems; for instance, the equation $\Delta^2 u = f\in C(\overline{B})$ has a unique solution $u\in C^{4,0}(\overline{B})$ if the \emph{Dirichlet} b.c. $u=\partial_\nu u=0$ on $\partial B$ is assumed.  Furthermore, for $h_0\in C^{1,0}(\partial B)$ and $h_1\in C(\partial B)$, the \emph{biharmonic extension} \cite[p. 141]{GGS10} is given by
\begin{equation}\label{ubihar}
u(x)=\int_{\partial B} K(x,z) h_0(z)\ dz + \int_{\partial B}L(x,z) h_1(z)\ dz\qquad \text{ for }\ x\in B,
\end{equation}
where $K(x,z):=\partial_\nu|_{z}\Delta G_2(x,\cdot)$ and $L(x,z):=-\Delta G_2(x,\cdot)(z)$ are usually also called Poisson kernels.  In this case, $u$ given by \eqref{ubihar} satisfies $\Delta^2 u=0$ in $B$, $u=h_0$ on $\partial B$, and $\partial_\nu u = h_1$ on $\partial B$. In the general case $s\in\N$, b.c. of the type
\begin{align}\label{kbc}
 \partial_\nu^k u = h_k\qquad \text{ on }\partial B\quad \text{ for }k\in\{0,1,\ldots,s-1\},
\end{align}
can be prescribed for $h_k\in C^{s-k,0}(\partial B)$. For details, see \cite{E75-1,E75-2,GGS10}.

For the fractional Laplacian $(-\Delta)^\sigma$ with $\sigma\in(0,1)$, due to the nonlocality, well-posedness of linear problems is achieved by prescribing b.c. in the whole complement of the ball $\R^N\backslash\overline{B}$. A Poisson kernel for this notion of b.c. can be found in \cite[equation (3), Chapitre V]{R37}, see also \cite{L72,B99,B16,FW16}. More precisely, if $g\in C(\R^N)\cap L^\infty(\R^N)$, then the \emph{$\sigma$-harmonic extension} $u:\R^N\to \R$ of $g$ in $B$ is given by 
\begin{align}\label{P:rep:sigma}
u(x)=\chi_{\overline{B}}(x)\int_{\R^N\backslash\overline{B}} \Gamma_\sigma(x,y)g(y)\;dy  + \chi_{\R^N\backslash\overline{B}}(x)g(x)\qquad\text{ for }\ x\in\R^N,
\end{align}
where $\Gamma_\sigma(x,\cdot):=-(-\Delta)^\sigma G_\sigma(x,\cdot)$ in $\mathbb R^N\backslash\overline{B}$ for $x\in \R^N$.  The function $u$ given by \eqref{P:rep:sigma} is the unique continuous $\sigma$-harmonic extension of $g$.  However, in this nonlocal case, 
it is possible to prescribe another type of b.c.: a \emph{singular trace}.  As a consequence, a $\sigma$-harmonic function $u$ is represented, for $x\in B$, via 
\begin{align}\label{singular}
u(x)=\chi_{\overline{B}}(x)\int_{\R^N\setminus\overline{B}}\Gamma_\sigma(x,y) u(y)\;dy
\ +\ 
m_\sigma\int_{\partial B}M_\sigma(x,z)v(z)\;dz,\qquad v(z)=\lim_{B\ni y\to z}\frac{u(y)}{(1-|y|^2)^{\sigma-1}},
\end{align}
where $m_\sigma>0$ is given in \eqref{c} and $M_\sigma$ is known as a \emph{Martin kernel} \cite{M41,
MS99,B99,nicola}, which, in the case of a ball, has the following explicit formula \cite{B99,nicola,AJS16b} for all 
$s>0$ 
\begin{equation}\label{martin}
M_s(x,z)\ :=\lim_{B\ni y\to z}\frac{G_s(x,y)}{{(1-|y|^2)}^{s}}
=\frac{k_{N,s}}{s}\,\frac{{(1-|x|^2)}_+^s}{{|x-z|}^N},
\quad x\in\R^N,\ z\in\partial B,\ x\neq z, 
\end{equation}
where $k_{N,s}>0$ is given in \eqref{c} below.  A function $u$ given by \eqref{singular} is sometimes called a \emph{large solution} or a \emph{boundary blow-up solution}, because of its singular behavior at $\partial B$.  Although the Martin kernel $M_\sigma$ can be used to prescribe a (singular) boundary condition and it converges (pointwisely) to the Poisson kernel for the Laplacian as $\sigma\to 1$, it is usually not called a Poisson kernel; this name is reserved for $\Gamma_\sigma$, which relates to values in $\R^N\backslash\overline{B}$, see Remark \ref{conv:rem}.  We also note that, for $s\in(0,1)$, abstract representation formulas for more general domains are available \cite{nicola,B99}, but the kernels are rarely explicit. 

\medskip

In this paper, we aim to combine these two settings\textemdash the nonlocal case, $s\in(0,1)$, and the classical higher-order case, $s\in\N$\textemdash to better understand the nature of higher-order operators in general. Previous results in the higher-order setting have considered b.c. of the type 
\begin{align}\label{lh}
 u=0\quad \text{ in }\R^N\backslash\overline{B}\qquad \text{ and }\qquad\lim_{B\ni x\to z}\frac{u(x)}{(1-|x|^2)^{s-1}}=\varphi(z)\quad \text{ for all }z\in\partial B.
\end{align}
for suitable $\varphi$.  Regularity, existence, and uniqueness of solutions of $(-\Delta)^su=f$ in $B$ satisfying \eqref{lh} are studied (in a more general setting) in \cite{G15:2}.  Similar boundary operators to \eqref{lh} (with $s$ instead of $s-1$) were also used in \cite{RS15} to show an integration-by-parts formula and a Pohozaev identity.  This paper is the first to study \emph{explicit} solutions for $(-\Delta)^su=f$ in $B$ satisfying a general notion of b.c. (see \eqref{dn-vs-partial} below), which generalizes \eqref{kbc} and \eqref{lh}.

\medskip

Our first result studies the \emph{Poisson kernel} for $(-\Delta)^s$ in $B$, which prescribes values in $\R^N\backslash\overline{B}$. Let $\sigma\in(0,1)$, $m\in\N_{0}$, $s=m+\sigma$, and 
\begin{equation}\label{PK}
\Gamma_s(x,y)\ :=\ (-1)^m\frac{\gamma_{N,\sigma}}{{|x-y|}^N}\frac{(1-|x|^2)_+^s}{(|y|^2-1)^s}\qquad \text{ for }x\in \mathbb R^N,\ y\in \R^N\backslash \overline{B},
\end{equation}
where $\gamma_{N,\sigma}$ is a positive normalization constant given in \eqref{c} below.
The kernel \eqref{PK} was previously known to be a Poisson kernel only for $s\in(0,1)$, see \eqref{P:rep:sigma}. The following Theorem shows that $\Gamma_s$ is a Poisson kernel for any $s>0$, if integrability is guaranteed. To include the case $s\in \N$ in some of our next results we also define $\Gamma_s\equiv 0$ if $s\in \N$ and to ease notation we write $B_r$ to denote the ball of radius $r>0$ centered at zero.  
\begin{thm}[Poisson kernel]\label{poisson:thm}
Let $N\in\N$, $m\in\N_0$, $\sigma\in(0,1)$, $s=m+\sigma$, $G_s$ as in \eqref{green}, and $\Gamma_s$ as in \eqref{PK}. Then, 
\begin{align}\label{GsDsG}
\Gamma_s(x,y)=\ -(-\Delta)_y^m(-\Delta)_y^\sigma G_s(x,y)\qquad \text{for $x\in \mathbb R^N$, $y\in \R^N\setminus\overline{B}$} 
\end{align}
and, if $\psi\in \cL_\sigma^1$ with $\psi=0$ in $B_{r}$, $r>1$,
and $u:\R^N\to \R$ is given by 
\begin{equation}\label{u:def}
u(x)\ =\int_{\R^N\setminus\overline{B}}\Gamma_s(x,y)\psi(y)\;dy\ +\ \psi(x),
 \end{equation}
then $u\in C^{\infty}(B)\cap C^s(B_{r})\cap H^s(B_{\rho})$ for any $\rho\in(1,r)$ and $u$ is the unique pointwise solution in the space $C^s(\overline{B})\cap H^s(B)$ of 
\begin{equation}\label{u:prob}
(-\Delta)^s u = 0\quad \text{ in }B\qquad \text{ with }\qquad u=\psi\quad \text{ on }\R^N\backslash\overline{B}.
\end{equation}
\end{thm}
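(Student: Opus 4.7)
My plan has three blocks: the identity \eqref{GsDsG}, the regularity of $u$, and the verification of \eqref{u:prob} together with uniqueness.

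For \eqref{GsDsG}, the case $s=\sigma\in(0,1)$ is the classical Riesz computation. For $s=m+\sigma$ with $m\geq 1$, the guiding observation is that Boggio's formula \eqref{green} forces $G_s(x,\cdot)\equiv 0$ on $\R^N\setminus\overline B$, since $\delta(y)=0$ makes $\rho(x,y)=0$. Thus for $y\notin\overline B$ the principal value defining $(-\Delta)_y^\sigma G_s(x,y)$ collapses to the absolutely convergent integral $-c_{N,\sigma}\int_B G_s(x,z)\,|y-z|^{-N-2\sigma}\,dz$. I would then pass $(-\Delta)_y^m$ under this integral (classical differentiation of a function that is smooth in $y$ away from $\overline B$) and compare with \eqref{PK}, matching normalization constants by reducing to the base $m=0$ case. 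I expect this step to be the main obstacle, both because of the bookkeeping on constants and because of the justification of the commutation of $(-\Delta)^m$ with the $y$-integral.

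For regularity, I would decompose $u=v+\psi$ with $v(x):=\int_{\R^N\setminus\overline B}\Gamma_s(x,y)\psi(y)\,dy$. The factor $(1-|x|^2)_+^s$ in $\Gamma_s$ makes $v$ vanish on $\R^N\setminus\overline B$, so $u=\psi$ there automatically. On any compact $K\Subset B$ the integrand is smooth in $x$ with uniform integrable $y$-bounds (since $\supp\psi\subset\{|y|\geq r\}$ and $\psi\in\cL^1_\sigma$), and differentiation under the integral yields $v\in C^\infty(B)$. The prefactor $(1-|x|^2)_+^s$ controls the boundary behavior and gives $v\in C^s(\overline B)$, which combined with $v\equiv 0$ outside $\overline B$ and $\psi\equiv 0$ on $B_r$ upgrades to $u\in C^s(B_r)$. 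The $H^s(B_\rho)$-bound for $\rho\in(1,r)$ then follows from this $C^s$-regularity together with the vanishing outside $\overline B$.

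For $(-\Delta)^su=0$ in $B$, I would argue in duality. Given $\phi\in C_c^\infty(B)$, Fubini and \eqref{GsDsG} give
\begin{equation*}
\int_B v(x)(-\Delta)^s\phi(x)\,dx=-\int_{\R^N\setminus\overline B}\psi(y)\,(-\Delta)^s_y\int_B G_s(x,y)(-\Delta)^s\phi(x)\,dx\,dy.
\end{equation*}
The inner $x$-integral equals $\phi(y)$ for $y\in B$ by the Green representation and vanishes for $y\notin\overline B$ (since $G_s(\cdot,y)\equiv 0$), so it coincides with the extension of $\phi$ by zero, a $C_c^\infty(\R^N)$-function. Hence $\int_B v(-\Delta)^s\phi=-\int_{\R^N\setminus\overline B}\psi(-\Delta)^s\phi$, and combining with the $\psi$-part of $u$ gives $\int_{\R^N}u(-\Delta)^s\phi\,dx=0$, i.e., $(-\Delta)^su=0$ in $B$ distributionally; the $C^\infty(B)$-regularity promotes this to the pointwise statement. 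Finally, uniqueness reduces to showing that any $w\in C^s(\overline B)\cap H^s(B)$ with $(-\Delta)^sw=0$ in $B$ and $w\equiv 0$ on $\R^N\setminus\overline B$ vanishes identically, which follows from an energy argument in the quadratic form associated to $(-\Delta)^s$ or from the Dirichlet-type uniqueness in \cite{AJS16b,G15:2}.
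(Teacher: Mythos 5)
Your plan reverses the logical order of the paper and, in doing so, creates a circularity that is not resolved. You propose to establish the identity \eqref{GsDsG} first by direct computation and then feed it into the Fubini/duality argument to deduce $(-\Delta)^su=0$ in $B$. But the direct verification of \eqref{GsDsG} for $m\geq1$ is not a matter of ``matching normalization constants'': after collapsing the principal value one is left with
\begin{equation*}
-(-\Delta)_y^m(-\Delta)_y^\sigma G_s(x,y)=C_{N,s}\int_B\frac{G_s(x,z)}{|y-z|^{N+2s}}\,dz,
\end{equation*}
and to identify this with the explicit expression $\Gamma_s(x,y)=(-1)^m\gamma_{N,\sigma}\,|x-y|^{-N}\,\delta(x)^s(|y|^2-1)^{-s}$ one would need a closed-form evaluation of the integral $\int_B G_s(x,z)|y-z|^{-N-2s}\,dz$ — a genuinely nontrivial identity, not an elementary reduction to the $m=0$ case. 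You flag this as the ``main obstacle'' but do not close it, and everything downstream (the duality step producing $(-\Delta)^su=0$) depends on it, so the argument does not go through as stated.

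The paper inverts the logic precisely to avoid this. It first shows that $u$ built from the explicit kernel $\Gamma_s$ is $s$-harmonic in $B$ by an induction in $s$ based on Proposition~\ref{Gamma:it}, a recurrence expressing $\Gamma_s(x,y)$ as $\Gamma_{s-1}(x,y)$ minus a Martin-kernel integral; the first term is $(s-1)$-harmonic by the induction hypothesis (hence $s$-harmonic), the second is $s$-harmonic by Proposition~\ref{lem:sharm-intro}, and the base case $s\in(0,1)$ is classical. Uniqueness in $C^s(\overline B)\cap H^s(B)$ is then proved via the weak formulation and Lemma~\ref{ibyp}. Only \emph{after} these two facts does the paper run the duality/Fubini argument you describe — but applied to $\widetilde\Gamma_s:=-(-\Delta)^m_y(-\Delta)^\sigma_y G_s$, whose pointwise formula is never computed — and concludes $\widetilde\Gamma_s=\Gamma_s$ by comparing the two resulting solutions and invoking uniqueness plus density of test data. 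Your regularity step (prefactor $\delta(x)^s$ times a smooth integral, Lemma~\ref{l:ws}) and your uniqueness step match the paper. To repair your plan, either carry out the integral identity above in full, or prove harmonicity of $u=\cH_s\psi$ by a route independent of \eqref{GsDsG} — the recurrence of Proposition~\ref{Gamma:it} is exactly the device that makes this possible — and then recover \eqref{GsDsG} as a corollary of uniqueness, as the paper does.
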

The proof of Theorem \ref{poisson:thm} is made by induction, using a new recurrence equation that relates $\Gamma_s$ with $\Gamma_{s-1}$, see Proposition~\ref{Gamma:it}.  In the following, as it is customary in the fractional setting, we use the name \emph{Poisson kernel} only for the kernel $\Gamma_s$.

If $\psi\not\in\cL_\sigma^1$, then, in general, it is \emph{not} possible to compute pointwisely $(-\Delta)^m(-\Delta)^\sigma u$, with $u$ as in \eqref{u:def}, see Remark~\ref{pointwise:rmk}.  On the other hand, notice that $\Gamma_s(x,\cdot)$ has a strong singularity at the boundary $\partial B$ and, because of this, we require that $\psi=0$ near $\partial B$ in Theorem \ref{poisson:thm}.  For functions which are different from zero near $\partial B$, the Poisson kernel for $(-\Delta)^s$ is more involved: it is the sum of the Poisson kernel $\Gamma_\sigma$ for $(-\Delta)^\sigma$ and suitable (boundary) correction terms, see Corollary \ref{I-2} and Theorem \ref{uniqueness:thm:2} below. To describe these corrector terms and a more general family of solutions with boundary values we introduce first a new notion of \emph{higher-order trace operator}.
\begin{defi}
For $N\in\N$, $k\in\N_0,$ $\sigma\in(0,1],$ and $x\in \R^N$, let 
%$\delta(x):=(1-|x|^2)_+$ and
\begin{equation}\label{dn-vs-partial}
\dn^{k+\sigma-1}u(z):=\frac{(-1)^k}{k!}\lim_{x\to z}\frac{\partial^k}{\partial (|x|^2)^k}[(1-|x|^2)^{1-\sigma} u(x)]\qquad \text{ for }z\in\partial B.
\end{equation}
Here and in the following, all limits are taken in the normal direction with respect to $\partial B$ from inside $B$, that is, the limit $\lim\limits_{x\to z}$ is always meant for $x\in B$ such that $\frac{x}{|x|}=z\in \partial B$.  
\end{defi}

We remark that the traces $\dn^{k+\sigma-1}$ are \emph{not} fractional differential operators in general (observe that the composition $D^s\circ D^t$ is not well-defined). The constants and the derivatives with respect to $|x|^2$ in \eqref{dn-vs-partial} are very useful to simplify expressions; however, analogous results can be obtained using, for example, standard normal derivatives $\partial_\nu$, see Remark \ref{K:Equiv}. We refer to Subsection \ref{remarks-derivative} below for the main properties of $\dn^{k+\sigma-1}$.
Trace operators combining weights and derivatives were also used in \cite[Theorem 6.1]{G15:2} (see also \cite{grubb16,G15:3}), where solvability in a more general setting is studied.

The traces $\dn^{k+\sigma-1}$ unify several previous approaches. For instance, $\dn^{\sigma-1}$ with $\sigma\in(0,1)$ is the \emph{singular trace} used in \eqref{singular} and, under suitable assumptions (see Lemma \ref{prop:de} below), $\dn^{k+\sigma-1}$ can be reduced to the boundary operator in \eqref{lh}. Of particular importance is the case $\sigma=1$, where $\dn^{k+\sigma-1}=D^k$ is (up to a constant) the differential operator $(\frac{\partial}{\partial |x|^2})^k$. These derivatives were used in \cite{E75-1, E75-2} together with explicit boundary kernels to study closed formulas for polyharmonic Dirichlet problems. Using \eqref{dn-vs-partial}, we extend the results in \cite{E75-1, E75-2} to the higher-order fractional setting. 

\begin{defi}
For $N\in\N$, $\sigma\in(0,1]$, $m\in\N_0$, $s=m+\sigma$, $k\in\{0,1,\ldots,m\}$, $x,y\in \R^N,$ $x\neq y$, and $\theta\in \partial B$, the (fractional) \emph{Edenhofer kernels} are given by
\begin{align}\label{sEK}
E_{k,s}(x,\theta):=\frac{1}{\omega_N}(1-|x|^2)_+^{s}\dn^{m-k} \zeta_x(\theta),\qquad \text{ where }\quad \zeta_x(y):=\frac{|y|^{N-2}}{|x-y|^N}
\end{align}
and $\omega_N:=|\partial B|$.
\end{defi}
See Remark \ref{noconstants} for the original formulation in \cite{E75-1, E75-2}. The following theorem uses \eqref{PK} and \eqref{sEK} to provide explicit solutions to nonhomogeneous linear problems with  one nonlocal condition and $m+1$ prescribed boundary traces. Let 
\begin{align}\label{delta}
\delta(x):=(1-|x|^2)_+\qquad \text{for }x\in\R^N.
\end{align}

\begin{thm}[Explicit solution]\label{main:thm}
Let $m\in\N_0$, $\sigma,\alpha\in(0,1]$, $s=m+\sigma$, $2s+\alpha\notin \N$, $g_k\in C^{m-k,0}(\partial B)$ for $k=0,\ldots,m$, $f\in C^\alpha(\overline{B})$, $h\in \cL^1_{\sigma}$ such that $h=0$ in $B_{r}$, $r>1$, and $u:\R^N\to\R$ be given by 
$u(x)=h(x)$ for $x\in\R^N\backslash\overline{B}$ and 
\begin{align*}
u(x)=\int_B G_s(x,y)f(y)\ dy+\int_{\R^N\backslash\overline{B}}\Gamma_s(x,y)h(y)\ dy+
\sum_{k=0}^{m}\ \int_{\partial B}E_{k,s}(x,\theta)\ g_k(\theta)\ d\theta\qquad \text{for $x\in B$}.
\end{align*}
Then, $u\in C^{2s+\alpha}(B)$, $\delta^{1-\sigma}u\in C^{m,0}(\overline{B})$ and, for $k=0,1,\ldots,m,$
\begin{align*}
(-\Delta)^s u=f\quad \text{ in }\ \ B,\qquad u=h\quad \text{ on }\ \ \mathbb R^N\backslash\overline{B},\qquad \text{ and }\quad \dn^{k+\sigma-1} u= g_k\quad \text{ on }\ \ \partial B.
\end{align*}
\end{thm}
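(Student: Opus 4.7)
The plan is to exploit linearity: write $u = u_f + u_h + u_g$ for the source, exterior, and boundary contributions, respectively, and verify that each summand independently carries its own datum while leaving the other data trivial. For $u_f(x) = \int_B G_s(x,y)f(y)\,dy$, the mapping properties of Boggio's Green function established in \cite{AJS16b}, together with Schauder-type estimates for $f\in C^\alpha(\overline B)$ with $2s+\alpha\notin\N$, yield $u_f\in C^{2s+\alpha}(B)$ with $(-\Delta)^s u_f = f$ in $B$; moreover, Boggio's formula~\eqref{green} gives $G_s(x,y) = O(\delta(x)^s)$ uniformly in $y\in B$, which produces $u_f\equiv 0$ outside $\overline B$, $\delta^{1-\sigma}u_f\in C^{m,0}(\overline B)$, and $D^{k+\sigma-1}u_f \equiv 0$ on $\partial B$ for every $0\le k\le m$. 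For $u_h$, Theorem~\ref{poisson:thm} already delivers $(-\Delta)^s u_h = 0$ in $B$ together with the exterior condition, while the factor $(1-|x|^2)^s$ present in the explicit formula~\eqref{PK} for $\Gamma_s$ supplies, by the same boundary expansion argument, $D^{k+\sigma-1}u_h\equiv 0$ on $\partial B$.

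The substance of the proof is the analysis of $u_g(x) = \sum_{k=0}^m \int_{\partial B} E_{k,s}(x,\theta) g_k(\theta)\,d\theta$. Vanishing outside $\overline B$ is immediate from the factor $(1-|x|^2)_+^s$ in~\eqref{sEK}. For $s$-harmonicity, I would show that $E_{k,s}(\,\cdot\,,\theta)$ itself is $s$-harmonic in $B$ for each fixed $\theta\in\partial B$. The idea is to expand $D^{m-k}\zeta_x(\theta)$ as a finite linear combination, with polynomial-in-$|x|^2$ coefficients, of the Martin-kernel building blocks $|x-\theta|^{-N-2j}$, and then to check directly that $(-\Delta)^s\bigl[(1-|x|^2)^s|x-\theta|^{-N-2j}\bigr]=0$ in $B$; a natural way to organise this is by induction on $m$ using the recurrence furnished by Proposition~\ref{Gamma:it}, reducing matters to the base case $m=0$ where $E_{0,\sigma}$ coincides, up to normalization, with the Martin kernel~\eqref{martin}. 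Interior smoothness and dominated convergence then give $u_g\in C^\infty(B)$ and $\delta^{1-\sigma}u_g\in C^{m,0}(\overline B)$.

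The main obstacle is the biorthogonality identity
\[
\lim_{x\to z}D^{j+\sigma-1}_x\int_{\partial B} E_{k,s}(x,\theta)g_k(\theta)\,d\theta = \delta_{jk}\,g_k(z),\qquad 0\le j,k\le m,\ z\in\partial B,
\]
which expresses that the kernels $E_{0,s},\ldots,E_{m,s}$ form a dual basis to the traces $D^{0+\sigma-1},\ldots,D^{m+\sigma-1}$. After multiplying by $(1-|x|^2)^{1-\sigma}$ as in the definition~\eqref{dn-vs-partial}, the $k$-th integrand becomes $\omega_N^{-1}(1-|x|^2)^{m+1}D^{m-k}\zeta_x(\theta)\,g_k(\theta)$. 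The strategy is to expand this expression in powers of $(1-|x|^2)$ using the explicit form of $D^{m-k}\zeta_x$ and to observe that differentiating $j$ times in $|x|^2$ and sending $x\to z$ produces a singular integral on $\partial B$ that concentrates at $\theta=z$ precisely when $j=k$: for $j<k$ the surviving prefactor $(1-|x|^2)^{m+1-j}$ still vanishes; for $j>k$ the excess singularity $|x-\theta|^{-N-2(j-k)}$ yields an integrable but cancellation-driven limit zero, using the Hölder regularity $g_k\in C^{m-k,0}(\partial B)$; and for $j=k$, after a change of variables on the sphere, one recovers the classical Poisson-type approximate identity driven by the kernel $(1-|x|^2)/|x-\theta|^N$ in the spirit of \cite{E75-1,E75-2}. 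Combining the three pieces $u_f,u_h,u_g$ by linearity then yields the theorem together with the stated regularity.
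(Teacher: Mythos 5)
Your decomposition $u=u_f+u_h+u_g$ and the treatment of $u_f$ and $u_h$ match the paper: both pieces have vanishing traces $D^{k+\sigma-1}$ on $\partial B$ by Lemma~\ref{prop:de} together with the $\delta(x)^s$ factor visible in \eqref{green} and \eqref{PK}, and they produce $f$ and $h$ respectively by \cite[Theorem 1.1]{AJS16b} and Theorem~\ref{poisson:thm}. The divergence is entirely in how you propose to handle $u_g$, and there your route is genuinely different and, as sketched, incomplete.

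The paper does not analyze $E_{k,s}(\cdot,\theta)$ directly. Theorem~\ref{thm:sHTOD} instead sets $v:=\sum_k\int_{\partial B}E_{k,m+1}(\cdot,\theta)g_k(\theta)\,d\theta$ (the $\sigma=1$, i.e.\ polyharmonic, version), cites Edenhofer \cite[Satz 2 \& 3]{E75-2} to conclude $(-\Delta)^{m+1}v=0$ with $D^kv=g_k$ on $\partial B$, and then observes that $u_g=\delta^{\sigma-1}v$. The two nontrivial facts one needs about $u_g$ are then obtained for free: the traces, because by definition $D^{k+\sigma-1}u_g=D^k[\delta^{1-\sigma}u_g]=D^kv=g_k$, so the ``biorthogonality identity'' you single out as the main obstacle is inherited verbatim from Edenhofer's integer-order result rather than reproved; and $s$-harmonicity, because Almansi's decomposition (Lemma~\ref{Almansi:l}) writes $v=\sum_k m_{k+1}\int_{\partial B}M_{k+1}(\cdot,\theta)h_k(\theta)\,d\theta$, so $u_g=\delta^{\sigma-1}v=\sum_k m_{k+1}\int_{\partial B}M_{k+\sigma}(\cdot,\theta)h_k(\theta)\,d\theta$ is $s$-harmonic by Proposition~\ref{lem:sharm-intro}. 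This ``multiply a polyharmonic function by $\delta^{\sigma-1}$'' reduction is the key structural insight of the section; it is what lets the paper avoid any singular-integral analysis of the Edenhofer kernels in the fractional case.

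Your proposal, by contrast, tries to redo the singular-integral computation from scratch: you want to expand $D^{m-k}\zeta_x$, check $s$-harmonicity building-block by building-block, and then verify $\lim_{x\to z}D^{j+\sigma-1}_x\int E_{k,s}(x,\theta)g_k(\theta)\,d\theta=\delta_{jk}g_k(z)$ by a concentration/cancellation argument. Two concrete issues. First, Proposition~\ref{Gamma:it}, which you want to drive an induction on $m$, is a recurrence for the \emph{exterior} Poisson kernel $\Gamma_s$ and does not give you a recurrence for the boundary kernels $E_{k,s}$; the relevant recurrences for the $E_{k,s}$'s are not in the paper, and this step is not supported by the lemma you cite. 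Second, the biorthogonality limit\textemdash which you correctly identify as the heart of the matter\textemdash is exactly what Edenhofer proves (for $\sigma=1$) in \cite{E75-1,E75-2} by delicate spherical-harmonic/approximate-identity estimates, and the ``cancellation-driven limit zero for $j>k$'' would need quantitative control of exactly the kind those papers provide. Re-deriving it for fractional $\sigma$ without the $\delta^{\sigma-1}$ reduction is a nontrivial undertaking that the proposal only gestures at; as written it is a gap rather than a proof. If you want to pursue a direct route, you would need to either reproduce Edenhofer's estimates for each fixed $j,k$, or else discover the $\delta^{\sigma-1}$/Almansi reduction, at which point you are doing what the paper does.
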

The proof is based on the results from \cite{E75-1, E75-2} and the following extraordinary fact: 
if $v$ is a suitable $(m+1)$-harmonic function in $B$ and $\delta$ as in \eqref{delta}, then $u=\delta^{\sigma-1} v$ is an $(m+\sigma)$-harmonic function in $B$, that is,
\begin{align*}
(-\Delta)^{m+1}v=0 \quad\text{ in }B\quad \text{ implies }\quad (-\Delta)^{m+\sigma}(\delta^{\sigma-1}v)=0 \quad \text{ in $B$};
\end{align*}
see also Corollary \ref{Almansi:cor} below for a more general and precise statement.  This relationship seems to be previously not known 
and relies on properties of Martin kernels and a variant of \emph{Almansi's formula}, 
which decomposes an $m$-harmonic function into a finite sum of harmonic functions with polynomial coefficients, see \eqref{Almansi:f} and \eqref{A:d}.  For a discussion on the convergence of these kernels and solutions as $\sigma\to 1$, we refer to Remark \ref{conv:rem}.

\medskip

In a particular set of functions, the solution given by Theorem \ref{main:thm} is unique, which yields the following integral representation formula. 
\begin{thm}[Integral representation formula]\label{uniqueness:thm}
Let $m\in\N_0$, $\sigma,\alpha\in(0,1]$, $r>1$, $s=m+\sigma$, $2s+\alpha\not\in\N$, $u\in \cL_{\sigma}^1\cap C^{2s+\alpha}(B)$ be such that 
\begin{align*}
\delta^{1-\sigma} u \in C^{ m+\alpha}(\overline{B}),\qquad (-\Delta)^s u\in C^\alpha(\overline{B}),\qquad \text{ and }\qquad u=0\ \ \text{ in }B_{r}\backslash\overline{B}.
\end{align*}
Then, for $x\in B$,
\begin{align*}
u(x)=\int_B G_s(x,y)(-\Delta)^s u(y)\ dy + \int_{\R^N\backslash\overline{B}}\Gamma_s(x,y)u(y)\ dy+ \sum_{k=0}^{m}\ \int_{\partial B}E_{k,s}(x,\theta)\dn^{k+\sigma-1} u(\theta)\ d\theta.
\end{align*}
\end{thm}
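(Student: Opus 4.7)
The plan is to construct the right-hand side as a concrete candidate $v$, match its $s$-Dirichlet data to that of $u$ via Theorem~\ref{main:thm}, and then deduce $u=v$ from a uniqueness statement that I would reduce to a classical polyharmonic one through the Almansi-type correspondence of Corollary~\ref{Almansi:cor}. First I would set
\[
v(x):=\int_B G_s(x,y)(-\Delta)^s u(y)\,dy+\int_{\R^N\setminus\overline{B}}\Gamma_s(x,y)u(y)\,dy+\sum_{k=0}^{m}\int_{\partial B}E_{k,s}(x,\theta)\dn^{k+\sigma-1}u(\theta)\,d\theta
\]
for $x\in B$, extended by $v=u$ on $\R^N\setminus\overline{B}$. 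The hypotheses on $u$ are exactly what Theorem~\ref{main:thm} requires once we take $f:=(-\Delta)^s u\in C^\alpha(\overline{B})$, $h:=u\chi_{\R^N\setminus\overline{B}}\in\cL^1_\sigma$ (which vanishes in $B_r$ after extending by zero inside $\overline{B}$), and $g_k:=\dn^{k+\sigma-1}u$; the assumption $\delta^{1-\sigma}u\in C^{m+\alpha}(\overline{B})$ together with definition~\eqref{dn-vs-partial} yields $g_k\in C^{m-k,0}(\partial B)$. Invoking Theorem~\ref{main:thm} then produces $(-\Delta)^s v=(-\Delta)^s u$ in $B$, $v=u$ on $\R^N\setminus\overline{B}$, and $\dn^{k+\sigma-1}v=\dn^{k+\sigma-1}u$ on $\partial B$ for each $k\in\{0,\ldots,m\}$.

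Setting $w:=u-v$, the theorem reduces to the uniqueness claim that any function in the ambient class which is $s$-harmonic in $B$, vanishes in $\R^N\setminus\overline{B}$, and whose traces $\dn^{k+\sigma-1}$ on $\partial B$ vanish for $k=0,\ldots,m$ must itself vanish. For this I would introduce $W:=\delta^{1-\sigma}w\in C^{m+\alpha}(\overline{B})$ and apply Corollary~\ref{Almansi:cor}, which identifies $s$-harmonic functions in $B$ that vanish outside $\overline{B}$ with $\delta^{\sigma-1}$ times $(m+1)$-polyharmonic functions; this delivers $(-\Delta)^{m+1}W=0$ in $B$. Because $(1-|x|^2)^{1-\sigma}w(x)=W(x)$ in $B$, the definition~\eqref{dn-vs-partial} rewrites each vanishing trace as
\[
\dn^{k+\sigma-1}w(z)=\frac{(-1)^k}{k!}\lim_{x\to z}\frac{\partial^k W(x)}{\partial(|x|^2)^k}=0 \qquad(k=0,\ldots,m),
\]
so $W$ solves a polyharmonic Dirichlet problem on $B$ with the $m+1$ Edenhofer-type boundary traces (the $\sigma=1$ case of our operators) all vanishing. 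The classical uniqueness result of Edenhofer \cite{E75-1,E75-2} then forces $W\equiv 0$, hence $w\equiv 0$ in $B$; combined with the exterior vanishing, $u\equiv v$ on $\R^N$.

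The main obstacle is the verification of the Almansi-type correspondence in the regularity at hand: extracting that $\delta^{1-\sigma}w$ is $(m+1)$-polyharmonic in $B$ from the single nonlocal identity $(-\Delta)^m(-\Delta)^\sigma w=0$ together with the exterior vanishing of $w$, while propagating enough regularity up to $\partial B$ to justify the polyharmonic uniqueness argument. Once Corollary~\ref{Almansi:cor} is in place, the translation of the boundary traces and the appeal to the classical polyharmonic Dirichlet uniqueness theorem are essentially algebraic, and the remaining steps are routine checks already supplied by Theorem~\ref{main:thm}.
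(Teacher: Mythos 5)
Your construction of $v$ via Theorem \ref{main:thm}, the reduction to showing $w:=u-v\equiv 0$, and your identification of what the uniqueness step requires all match the paper's strategy (the paper applies Theorem \ref{u:thm} to $u-\cG_s u-\cH_s u$, which is the same thing in slightly different packaging). Where you genuinely diverge is in how that uniqueness claim is proved, and there your argument has a real gap — one you partly acknowledge but then understate by calling the rest "routine."

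The uniqueness you need is that an $s$-harmonic $w$ with $\delta^{1-\sigma}w\in C^{m+\alpha}(\overline B)$, vanishing on $\R^N\setminus\overline B$, with $D^{k+\sigma-1}w=0$ for $k=0,\dots,m$, must vanish. The paper supplies this in Lemma \ref{u:2:l}: Grubb's boundary regularity theorem (\cite[Theorem 4, eq. (6)]{G15:2}) upgrades the regularity to $\delta^{-s}w\in C^{m+1+\alpha}(\overline B)$, Lemma \ref{l:ws} then shows $w\in\cH_0^s(B)$, and uniqueness of weak solutions concludes. Your proposal instead routes through the Almansi-type correspondence "$w$ $s$-harmonic $\Rightarrow$ $\delta^{1-\sigma}w$ is $(m+1)$-polyharmonic" and then classical polyharmonic Dirichlet uniqueness. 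But that correspondence is exactly Corollary \ref{Almansi:cor}, which in the paper is \emph{derived from} Theorem \ref{u:thm}, whose uniqueness ingredient is Lemma \ref{u:2:l} itself. So either you invoke Corollary \ref{Almansi:cor} as a black box (in which case the Edenhofer appeal at the end is redundant and the Grubb machinery is merely hidden, not avoided), or you must establish directly that $(-\Delta)^m(-\Delta)^\sigma w=0$ in $B$ together with $w=0$ outside $\overline B$ forces $\Delta^{m+1}(\delta^{1-\sigma}w)=0$ in $B$ — and that implication is the hard point, for which you offer no mechanism. There is no elementary algebraic passage from the nonlocal equation to the local polyharmonic one; the boundary regularity theory of Grubb (or something equivalent) is precisely what makes it work, and Theorem \ref{main:thm} cannot supply it, since Theorem \ref{main:thm} only gives existence of a solution with prescribed data, not uniqueness.
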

The proof uses the theory of weak solutions as developed in \cite{AJS16a} and on regularity in H\"{o}lder and fractional Sobolev spaces, see Subsection \ref{reg:sec}; in particular, the results from \cite{G15:2} play an important role in optimizing the assumptions.  The requirement $u=0$ in $B_{r}\backslash\overline{B}$, $r>1$, is used to ensure integrability with $\Gamma_s$; however, this assumption can be removed at the expense of extra regularity hypothesis, as the next results shows.
\begin{thm}\label{uniqueness:thm:2}
Let $m\in\N_0$, $\sigma,\alpha\in(0,1]$, $r>1$, $s=m+\sigma$, $2s+\alpha\not\in\N$, $u\in \cL_{\sigma}^1\cap C^{2s+\alpha}(B)\cap L^{\infty}(B_{r}\backslash\overline{B})$, $(-\Delta)^s u\in C^\alpha(\overline{B}),$ and 
\begin{align}\label{reg:assu}
(-\Delta)^\sigma u\in C^{m-\sigma-1+\beta}(\overline{B}),\qquad \text{where}\quad \beta>0\quad \text{ is such that }\quad m-\sigma-1+\beta>0.
\end{align}
For $x\in B$, set $w(x):=u(x)-\int_{\R^N\backslash\overline{B}}\Gamma_\sigma(x,y)u(y)\ dy$. Then
\begin{align}\label{w:reg}
\delta^{1-\sigma} w\in C^{m+\alpha}(\overline{B})
\end{align}
and, for $x\in B$,
\begin{align*}
u(x)=\int_B G_s(x,y)(-\Delta)^s u(y)\ dy + \int_{\R^N\backslash\overline{B}}\Gamma_\sigma(x,y)u(y)\ dy&+ \sum_{k=0}^{m}\ \int_{\partial B}E_{k,s}(x,\theta)\dn^{k+\sigma-1} w(\theta)\ d\theta,
\end{align*}
where $E_{k,s}$, $\Gamma_\sigma$ are given in \eqref{sEK}, \eqref{PK} respectively. Moreover, if $\sigma\in(0,1)$, then $\dn^{\sigma-1} w=\dn^{\sigma-1} u$ at $\partial B$ and, for $k\in\{1,2,\ldots,m\}$,
\begin{align*}
\dn^{k+\sigma-1} w(\theta)=\gamma_{N,\sigma}\frac{(-1)^k}{k!}\lim_{x\to\theta} \frac{\partial^{k-1}}{\partial (|x|^2)^{k-1}}\int_{\R^N\backslash\overline{B}}\frac{u(x)-u(y)}{|x-y|^N\ (|y|^2-1)^\sigma}\ dy,\qquad \theta\in\partial B.
\end{align*}
\end{thm}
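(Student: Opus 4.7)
The strategy is to reduce Theorem~\ref{uniqueness:thm:2} to Theorem~\ref{uniqueness:thm} by subtracting from $u$ the $\sigma$-harmonic extension of its exterior values, thereby producing a function that vanishes outside $\overline{B}$ and is amenable to the representation formula of the previous theorem.

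Concretely, first set
\[
U(x):=\int_{\R^N\backslash\overline{B}}\Gamma_\sigma(x,y)\,u(y)\,dy\quad\text{for }x\in B,\qquad U:=u\quad\text{on }\R^N\backslash\overline{B}.
\]
The assumptions $u\in\cL^1_\sigma\cap L^\infty(B_r\backslash\overline{B})$ control both the strong singularity of $\Gamma_\sigma$ at $\partial B$ (local integrability holds since $\sigma<1$) and the decay at infinity, so $U$ is well defined and smooth in $B$; moreover, by the Poisson theory of $(-\Delta)^\sigma$ on $B$ (see \eqref{P:rep:sigma} and Theorem~\ref{poisson:thm} with $s=\sigma$), $(-\Delta)^\sigma U\equiv0$ pointwise in $B$, whence $(-\Delta)^sU=(-\Delta)^m(-\Delta)^\sigma U\equiv 0$ in $B$. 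Setting $w:=u-U$ on $B$ and extending $w\equiv 0$ on $\R^N\backslash\overline{B}$, the function $w$ inherits $w\in\cL^1_\sigma\cap C^{2s+\alpha}(B)$, $w\equiv 0$ in $B_r\backslash\overline{B}$, and $(-\Delta)^sw=(-\Delta)^su\in C^\alpha(\overline{B})$. Using the mean-value identity $\int_{\R^N\backslash\overline{B}}\Gamma_\sigma(x,y)\,dy=1$ for $x\in B$, $w$ admits the commutator representation
\[
w(x)=\int_{\R^N\backslash\overline{B}}\Gamma_\sigma(x,y)\bigl(u(x)-u(y)\bigr)\,dy=\gamma_{N,\sigma}\,\delta(x)^\sigma F(x),\qquad F(x):=\int_{\R^N\backslash\overline{B}}\frac{u(x)-u(y)}{|x-y|^N(|y|^2-1)^\sigma}\,dy,
\]
so that $\delta^{1-\sigma}w=\gamma_{N,\sigma}\,\delta\cdot F$.

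The main obstacle is the regularity claim \eqref{w:reg}, namely $\delta^{1-\sigma}w\in C^{m+\alpha}(\overline{B})$. Since $(-\Delta)^\sigma U\equiv 0$ in $B$, the function $w$ satisfies $(-\Delta)^\sigma w=(-\Delta)^\sigma u\in C^{m-\sigma-1+\beta}(\overline{B})$ by hypothesis \eqref{reg:assu}. Viewing $w$ as a solution of the restricted $\sigma$-Dirichlet problem on $B$ with homogeneous exterior datum and H\"older-regular source, the boundary regularity theory developed in Subsection~\ref{reg:sec} and in \cite{G15:2} then upgrades the H\"older regularity of $(-\Delta)^\sigma w$ to $\delta^{1-\sigma}w\in C^{m+\alpha}(\overline{B})$. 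This bootstrap from $C^{m-\sigma-1+\beta}$ of the source to $C^{m+\alpha}$ of the weighted solution is the technically delicate step and is precisely where the sharpness of \eqref{reg:assu} enters.

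With \eqref{w:reg} in hand, $w$ meets every hypothesis of Theorem~\ref{uniqueness:thm}. Applying that theorem to $w$, and noting that the $\Gamma_s$-integral vanishes since $w\equiv 0$ outside $\overline{B}$, yields
\[
w(x)=\int_B G_s(x,y)(-\Delta)^su(y)\,dy+\sum_{k=0}^m\int_{\partial B}E_{k,s}(x,\theta)\,\dn^{k+\sigma-1}w(\theta)\,d\theta,\qquad x\in B,
\]
and adding $U(x)$ to both sides produces the claimed integral representation for $u$. The boundary identities are extracted from $\delta^{1-\sigma}w=\gamma_{N,\sigma}\,\delta\,F$: for $\sigma\in(0,1)$ and $k=0$, $\dn^{\sigma-1}w(\theta)-\dn^{\sigma-1}u(\theta)=-\lim_{x\to\theta}\delta(x)^{1-\sigma}U(x)$; writing $\delta^{1-\sigma}U(x)=\gamma_{N,\sigma}\,\delta(x)\int_{\R^N\backslash\overline{B}}u(y)/(|x-y|^N(|y|^2-1)^\sigma)\,dy$ and using $u\in L^\infty(B_r\backslash\overline{B})\cap\cL^1_\sigma$, a direct polar estimate shows the integral grows at most like $\delta(x)^{-\sigma}$ as $x\to\partial B$, so the whole expression is $O(\delta(x)^{1-\sigma})\to 0$, whence $\dn^{\sigma-1}w=\dn^{\sigma-1}u$. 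For $k\geq 1$, since $\delta=1-|x|^2$ is affine in $|x|^2$, the Leibniz rule along the radial direction gives $\partial_{|x|^2}^k(\delta F)=\delta\,\partial_{|x|^2}^kF-k\,\partial_{|x|^2}^{k-1}F$; the first summand vanishes in the limit $x\to\theta$ by \eqref{w:reg}, and combining the surviving second summand with the normalisation from \eqref{dn-vs-partial} produces the announced expression for $\dn^{k+\sigma-1}w(\theta)$.
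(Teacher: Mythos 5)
Your proposal is correct and follows essentially the same route as the paper, whose proof is a one-liner: apply Lemma~\ref{dnG:l} to justify the hypotheses of Theorem~\ref{uniqueness:thm} for $w=u-\cH_\sigma u$, then conclude. You have unpacked the content of that lemma — subtract the $\sigma$-harmonic extension, observe $(-\Delta)^s w=(-\Delta)^s u$ in $B$, verify $w\equiv 0$ outside $\overline B$, establish the weighted regularity \eqref{w:reg}, apply the previous representation formula to $w$, and then restore $u=w+\cH_\sigma u$.

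Two small points. First, the regularity step \eqref{w:reg} is precisely the content of Lemma~\ref{dnG:l}, and your sketch omits a step the paper needs: one must first show $w\in C^\sigma(\R^N)$ (via the Ros-Oton--Serra estimate, since $(-\Delta)^\sigma w\in L^\infty(B)$ and $w\equiv 0$ outside $B$) so that $\dn^{\sigma-1}w=0$ at $\partial B$; only then can \cite[eq.~(7.17)]{G15:2} be applied with $a=\sigma$ and $\varphi=\dn^{\sigma-1}w=0$. Pointing at ``the boundary regularity theory in \cite{G15:2}'' without this intermediate step leaves the application of Grubb's result unjustified. Second, your Leibniz computation, carried out carefully with the normalisation in \eqref{dn-vs-partial}, gives
\[
\dn^{k+\sigma-1}w(\theta)=\frac{(-1)^k}{k!}\gamma_{N,\sigma}\lim_{x\to\theta}\bigl(\delta(x)\,\partial_{|x|^2}^kF(x)-k\,\partial_{|x|^2}^{k-1}F(x)\bigr)=\frac{(-1)^{k-1}}{(k-1)!}\,\gamma_{N,\sigma}\lim_{x\to\theta}\partial_{|x|^2}^{k-1}F(x),
\]
which differs from the displayed constant $\frac{(-1)^k}{k!}$ in the theorem statement by the factor $-k$. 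A sanity check at $k=1$ confirms your sign agrees with Lemma~\ref{dnG:l} ($\dn^\sigma w=+\gamma_{N,\sigma}\lim F$), so the theorem's displayed constant appears to contain a typo rather than your derivation being wrong; still, since you claim your argument ``produces the announced expression,'' you should verify the constants rather than assert the match.
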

Assumption \eqref{reg:assu} is used to guarantee \eqref{w:reg}, see Lemma \ref{dnG:l} below. This allows to construct suitable corrector terms enabling the use of $\Gamma_\sigma$ instead of $\Gamma_s$.  Property \eqref{w:reg} may also follow from regularity assumptions on $u$ in $B_{r}\backslash\overline{B}$, but this is not entirely trivial and to keep this paper short we do not pursue this here and we only comment on the particular case $s=1+\sigma$ in Remark \ref{e:s:1sigma}.

As a consequence of our approach, we can provide a characterization for suitable $s$-harmonic functions in terms of {Martin kernels}. This yields in particular a very simple way to construct $(s+t)$-harmonic functions. The following corollary generalizes \cite[Remark 6.16.3]{AJS16}. 
Recall the definition of Martin kernel $M_s$ given in \eqref{martin}. 

\begin{cor}\label{Almansi:cor} Let $\alpha,\sigma\in(0,1]$, $m\in\N_0$, $s=m+\sigma$, $2s+\alpha\not\in\N$, and $u\in C^{2s+\alpha}(B)$ be such that $(-\Delta)^s u=0$ in $B$, $\delta^{1-\sigma}u\in C^{ m+\alpha}(\overline{B})$, and $u=0$ in $\R^N\backslash\overline{B}$.  Then $u\in C^{\infty}(B)$ and there are unique functions $g_k\in C(\partial B)$ such that
  \begin{align*}
u(x)=\sum_{k=0}^{m}\int_{\partial B} M_{k+\sigma}(x,\theta)\ g_k(\theta)\ d\theta.
  \end{align*}
  In particular, if $u\in C^{2s+\alpha}(B)$, $\delta^{1-\sigma}u\in C^{ m+\alpha}(\overline{B})$, and $u=0$ in $\R^N\backslash\overline{B}$ then 
\begin{align*}
(-\Delta)^{s}u=0 \quad\text{ in B}\qquad \text{ implies }\qquad (-\Delta)^{s+t}(\delta^{t}u)=0 \quad \text{ in $B$}\qquad \text{ for }t>m-s=-\sigma.
\end{align*}
\end{cor}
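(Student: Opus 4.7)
The plan is to apply Theorem~\ref{uniqueness:thm} to obtain an Edenhofer-type integral representation of $u$ and then convert it, via a triangular inversion on boundary traces, into the desired Martin-kernel representation. Since $u=0$ in $\R^N\backslash\overline{B}$, trivially $u\in\cL^1_\sigma$ and the support hypothesis ``$u=0$ in $B_r\backslash\overline{B}$'' of Theorem~\ref{uniqueness:thm} holds for every $r>1$. Combined with $(-\Delta)^s u=0\in C^\alpha(\overline{B})$ and $\delta^{1-\sigma}u\in C^{m+\alpha}(\overline{B})$, Theorem~\ref{uniqueness:thm} yields
\begin{equation*}
u(x)=\sum_{k=0}^{m}\int_{\partial B}E_{k,s}(x,\theta)\,\dn^{k+\sigma-1}u(\theta)\,d\theta,\qquad x\in B.
\end{equation*}
The smoothness $u\in C^\infty(B)$ follows from iterated interior regularity: $w:=(-\Delta)^\sigma u$ satisfies $(-\Delta)^m w=0$ in $B$, hence is classically $m$-polyharmonic and smooth in $B$, and this transfers to $u$ via interior Schauder estimates for the fractional Laplacian.

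Next, for each $\ell\in\{0,\ldots,m\}$ and smooth $\phi\in C^\infty(\partial B)$, set $V_\ell(x):=\int_{\partial B}M_{\ell+\sigma}(x,\theta)\phi(\theta)d\theta$, which is $s$-harmonic in $B$, vanishes in $\R^N\backslash\overline{B}$, and $\delta^{1-\sigma}V_\ell\in C^\infty(\overline{B})$ (since $\delta^{1-\sigma}V_\ell(x)=c(1-|x|^2)^\ell$ times the classical harmonic extension of $\phi$). The key analytic claim is that the boundary-trace operators
\begin{equation*}
T_{j\ell}\phi:=\dn^{j+\sigma-1}V_\ell\qquad\text{for }j,\ell\in\{0,\ldots,m\}
\end{equation*}
are lower-triangular: $T_{j\ell}\equiv 0$ for $j<\ell$, and $T_{\ell\ell}=c_\ell\,\mathrm{Id}$ with an explicit nonzero constant $c_\ell$. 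This is proved by writing $\delta^{1-\sigma}V_\ell(x)=c\int(1-|x|^2)^{\ell+1}|x-\theta|^{-N}\phi(\theta)d\theta$ and applying Leibniz's rule in $r=|x|^2$: every term carries a factor $(1-r)^{\ell+1-i}$ with $\ell+1-i\geq 1$ whenever $i\leq\ell$, and a scaling analysis near $\theta=z$ (at scale $|z-\theta|\sim\sqrt{1-r}$) shows the corresponding integrals are $O((1-r)^{1/2})$ and vanish as $x\to z$; the only surviving contribution to $T_{\ell\ell}$ comes from the Leibniz term proportional to $(1-r)/|x-\theta|^N$, which is (up to a constant) the classical Poisson kernel for $-\Delta$ on $B$ and produces $c_\ell\phi(z)$ in the limit.

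Given this triangular structure, the linear system $\sum_{\ell=0}^{m} T_{j\ell}g_\ell=\dn^{j+\sigma-1}u$, $j=0,\ldots,m$, can be solved iteratively: $g_0=c_0^{-1}\dn^{\sigma-1}u$, then $g_1=c_1^{-1}(\dn^{\sigma}u-T_{10}g_0)$, and so on, yielding unique $g_0,\ldots,g_m\in C(\partial B)$. Setting $\tilde u(x):=\sum_{\ell=0}^{m}\int_{\partial B}M_{\ell+\sigma}(x,\theta)g_\ell(\theta)d\theta$, the difference $u-\tilde u$ is $s$-harmonic in $B$, vanishes in $\R^N\backslash\overline{B}$, has $\dn^{j+\sigma-1}(u-\tilde u)\equiv 0$ on $\partial B$ for every $j\in\{0,\ldots,m\}$, and satisfies the regularity hypotheses of Theorem~\ref{uniqueness:thm}; the uniqueness part of that theorem then forces $u=\tilde u$. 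The uniqueness of $(g_\ell)$ is immediate from the invertibility of the triangular system.

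The ``in particular'' claim is a direct consequence: since $(1-|x|^2)_+^t M_{\ell+\sigma}(x,\theta)$ is a positive constant multiple of $M_{\ell+\sigma+t}(x,\theta)$ whenever $\ell+\sigma+t>0$ (which is guaranteed by $t>-\sigma$), one has $\delta^t u=\sum_{\ell=0}^{m}c_{\ell,t}\int_{\partial B}M_{\ell+\sigma+t}(\cdot,\theta)g_\ell\,d\theta$, a finite sum of $(\ell+\sigma+t)$-harmonic functions, each annihilated by $(-\Delta)^{s+t}=(-\Delta)^{m-\ell}(-\Delta)^{\ell+\sigma+t}$ since $\ell\leq m$. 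The main obstacle is the rigorous justification of the triangular structure: the Martin kernel $M_{\ell+\sigma}(\cdot,\theta)$ is singular at $x=\theta\in\partial B$, so showing that the subleading Leibniz terms indeed vanish in the trace limit, and that the leading term recovers $c_\ell\phi$, requires careful scaling estimates, dominated convergence, and exploitation of the Poisson-kernel structure along the normal direction to $\partial B$.
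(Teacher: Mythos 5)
Your strategy is sound and arrives at the right representation, but it takes a genuinely different route from the paper, and that route has one real gap.

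The paper's proof is a direct computation: from the Edenhofer representation $u=\sum_k\int_{\partial B}E_{k,s}(\cdot,\theta)\,h_k\,d\theta$ (Theorem~\ref{u:thm}), the explicit form \eqref{sEK} gives $u=\delta^{\sigma-1}\varphi$ with $\varphi:=\sum_k\int_{\partial B}E_{k,m+1}(\cdot,\theta)\,h_k\,d\theta$, a genuinely $(m+1)$-harmonic function. Applying the Almansi decomposition (Lemma~\ref{Almansi:l}) to $\varphi$ and multiplying back by $\delta^{\sigma-1}$ turns the $\delta^k\cH h_k$ summands into Martin-kernel integrals $\int M_{k+\sigma}(\cdot,\theta)g_k\,d\theta$ \emph{by identity}. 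No uniqueness argument is needed, and the $g_k$ only need to be continuous. Your proposal instead reconstructs the coefficients by inverting the triangular trace system $T_{j\ell}\phi=\dn^{j+\sigma-1}\!\int M_{\ell+\sigma}(\cdot,\theta)\phi\,d\theta$ and then tries to conclude $u=\tilde u$ via the uniqueness part of Theorem~\ref{uniqueness:thm}. Morally this is the same triangular structure that underlies Almansi, so the idea is correct.

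Two remarks on your route. First, the technical worry you raise at the end (singularity of $M_{\ell+\sigma}(\cdot,\theta)$ at $\theta$, careful scaling estimates for the subleading Leibniz terms) is not actually there if you use the factorization you yourself write down: $\delta^{1-\sigma}V_\ell(x)=c_\ell\,\delta(x)^\ell\,\cH\phi(x)$, a polynomial in $|x|^2$ times the classical harmonic extension, which is smooth up to $\overline B$ for smooth $\phi$. The whole triangular structure and the diagonal constants $c_\ell\neq0$ then follow from an elementary Leibniz computation in $r=|x|^2$, no singular-kernel analysis needed. Second, and more substantively, your final step \emph{does} have a gap: to apply the uniqueness of Theorem~\ref{uniqueness:thm} (equivalently Lemma~\ref{u:2:l}) to $u-\tilde u$, you must verify $\delta^{1-\sigma}\tilde u\in C^{m+\alpha}(\overline B)$, and with $g_\ell$ declared only in $C(\partial B)$ this fails. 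Your iterative inversion in fact produces $g_j\in C^{m-j+\alpha}(\partial B)$ if one tracks the regularity carefully (this \emph{is} enough for $\delta^{1-\sigma}\tilde u\in C^{m+\alpha}(\overline B)$ via the last clause of Theorem~\ref{thm:sHTOD} and Schauder estimates for $\cH g_\ell$), but you never check this, and the statement ``yielding unique $g_0,\ldots,g_m\in C(\partial B)$'' immediately followed by ``satisfies the regularity hypotheses of Theorem~\ref{uniqueness:thm}'' is a non sequitur as written. This is exactly the bookkeeping the paper sidesteps by arguing via the identity $u=\delta^{\sigma-1}\varphi$ plus Almansi, rather than via traces and uniqueness.

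The ``in particular'' part of your argument matches the paper's: from $\delta^t M_{\ell+\sigma}=cM_{\ell+\sigma+t}$ (for $\ell+\sigma+t>0$, i.e., $t>-\sigma$) and Proposition~\ref{lem:sharm-intro}, each summand of $\delta^t u$ is $(\ell+\sigma+t)$-harmonic, hence $(s+t)$-harmonic since $\ell\le m$.
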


We remark that the Edenhofer kernels $E_{k,s}$ are also related to higher-order traces of the Green function $G_s$.  In particular, for $E_{m-1,s}$ and $E_m$, we have the following.
\begin{lemma}\label{explicit-edenhofer}
For $m\in\N_0$, $\sigma\in(0,1]$, $s=m+\sigma$, $x\in B,$ and $z\in \partial B$ we have that
\begin{equation}
E_{m,s}(x,z)=m_s\dn^s[G_s(x,\cdot)](z)=\frac{1}{\omega_N}\frac{\delta(x)^s}{|x-z|^N}=m_s M_s(x,z)\label{comp1a}
\end{equation}	
and, if $m\geq 1$,
\begin{align}
E_{m-1,s}(x,z)&=m_{s-1}\ \dn^{s-1} ( -\Delta G_{s}(x,\cdot))(z)=\frac{1}{4\omega_N}\frac{\delta(x)^{s}}{|x-z|^{N+2}}(N\delta(x)-(N-4)|x-z|^2),\label{comp1}\\
E_{m,s}(x,z)&=m_{s-1}\ \dn^{s-2}(\Delta G_{s}(x,\cdot))(z).\label{complement}
\end{align}
\end{lemma}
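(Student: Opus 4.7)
The plan is to establish each identity by direct evaluation of the Edenhofer kernels from their definition \eqref{sEK}, combined with an explicit boundary expansion of Boggio's Green function. The rightmost equalities in \eqref{comp1a} and \eqref{comp1} are elementary: since $|z|=1$, one has $\dn^0\zeta_x(z)=|x-z|^{-N}$, whence $E_{m,s}(x,z)=\omega_N^{-1}\delta(x)^s|x-z|^{-N}$; parametrizing $y=rz$ along the inward normal so that $\partial/\partial|y|^2=(2r)^{-1}\partial/\partial r$ and $|x-rz|^2=|x|^2-2rx\cdot z+r^2$, differentiating $|y|^{N-2}|x-y|^{-N}$ in $r$ at $r=1$, and using $1-x\cdot z=(|x-z|^2+\delta(x))/2$ gives
\[
\dn^1\zeta_x(z)=\frac{N\delta(x)-(N-4)|x-z|^2}{4|x-z|^{N+2}},
\]
hence the rightmost equality in \eqref{comp1}. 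The Martin-kernel identity $E_{m,s}=m_sM_s$ is a direct comparison with \eqref{martin} using the normalization $m_s=s/(\omega_Nk_{N,s})$ from \eqref{c}.

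To relate $E_{m,s}$ and $E_{m-1,s}$ to boundary traces of $G_s$ and $\pm\Delta G_s$, I Taylor expand $(1+t)^{-N/2}$ in Boggio's formula \eqref{green}:
\[
G_s(x,y)=k_{N,s}\sum_{k\geq 0}\frac{\binom{-N/2}{k}}{s+k}\frac{\delta(x)^{s+k}\delta(y)^{s+k}}{|x-y|^{N+2k}}=:\sum_{k\geq 0}h_k(x,y)\delta(y)^{s+k},
\]
with $h_k$ smooth in $y$ up to $\partial B$. Multiplying by $\delta(y)^{1-\sigma}$ and taking the $(m+1)$-th radial derivative in $|y|^2$ that defines $\dn^s$, a short Leibniz argument shows that only the single contribution $h_0(x,z)\,\partial^{m+1}_u(1-u)^{m+1}|_{u=1}$ survives, giving $\dn^sG_s(x,\cdot)(z)=h_0(x,z)=M_s(x,z)$ and completing \eqref{comp1a}.

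For \eqref{comp1} and \eqref{complement} I apply $-\Delta_y$ termwise to the series above; with $\Delta\delta^\alpha=-2N\alpha\delta^{\alpha-1}+4\alpha(\alpha-1)|y|^2\delta^{\alpha-2}$ and $|y|^2=1-\delta$, this reorganizes into $-\Delta_yG_s(x,y)=\sum_{j\geq -2}\beta_j(x,y)\delta(y)^{s+j}$ with $\beta_{-2}=-4s(s-1)h_0$ and $\beta_{-1}$ an explicit combination of $h_0$, $h_1$, $y\cdot\nabla_yh_0$. Taking the $m$ radial derivatives behind $\dn^{s-1}$, only $\beta_{-2}\delta^{m-1}$ (contributing one radial derivative of $\beta_{-2}$) and $\beta_{-1}\delta^m$ (contributing $\beta_{-1}(x,z)$) survive, and combining the two with $1-x\cdot z=(|x-z|^2+\delta(x))/2$ and the Boggio recursion $k_{N,s}/k_{N,s-1}=1/[4(s-1)^2]$ (from $\Gamma(s)=(s-1)\Gamma(s-1)$) collapses the sum to $m_{s-1}^{-1}E_{m-1,s}$. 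For \eqref{complement}, $\dn^{s-2}$ takes only $m-1$ derivatives, so only the $\beta_{-2}\delta^{m-1}$ term contributes (with no derivative on $\beta_{-2}$), producing $4(s-1)k_{N,s}\delta(x)^s|x-z|^{-N}$, which multiplied by $m_{s-1}$ equals $E_{m,s}$. The main obstacle is precisely this algebra: the pieces arising from $\beta_{-2}$ and $\beta_{-1}$ individually contain terms in both $\delta(x)^s|x-z|^{-N}$ and $\delta(x)^{s+1}|x-z|^{-N-2}$, and only after carefully combining them and substituting the gamma-function ratio does the sum collapse into the compact Edenhofer numerator $N\delta(x)-(N-4)|x-z|^2$ with the normalization $1/(4\omega_N)$.
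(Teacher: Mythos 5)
Your proposal is correct in outline and reaches the same identities, but it takes a genuinely different route from the paper's proof. The parts you compute directly from the definition \eqref{sEK} (namely $\dn^0\zeta_x(z)=|x-z|^{-N}$, $\dn^1\zeta_x(z)=\frac{N\delta(x)-(N-4)|x-z|^2}{4|x-z|^{N+2}}$, and the Martin-kernel identity via $m_s=s/(\omega_Nk_{N,s})$) are exactly the paper's \eqref{expE}. Where the two arguments diverge is in relating the Edenhofer kernels to traces of $G_s$ and $\pm\Delta G_s$. For $\dn^sG_s=M_s$ the paper invokes nothing beyond the definition \eqref{martin} (which already \emph{is} $\lim_{y\to z}G_s(x,y)/\delta(y)^s$) together with Lemma~\ref{prop:de}; your Taylor expansion of $(1+t)^{-N/2}$ in Boggio's formula is a heavier way to get the identical conclusion. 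For \eqref{comp1} and \eqref{complement} the paper uses the differential recurrence \eqref{green-recurrence}, $(-\Delta)_yG_s=G_{s-1}-4(s-1)k_{N,s}P_{s-1}$ from~\cite{AJS16b}, which packages $-\Delta_yG_s$ into exactly two terms, one ($G_{s-1}$) that vanishes to order $\delta(y)^{s-1}$ so that $\dn^{s-2}$ ignores it, and one ($P_{s-1}$) that is an explicit rational function of order $\delta(y)^{s-2}$ to which $\dn^{s-1},\dn^{s-2}$ can be applied by a one-line Leibniz computation. Your termwise $-\Delta_y$ of the series reconstructs the same information (your $\beta_{-2}=-4s(s-1)h_0$ indeed agrees, on $\partial B$, with $-4(s-1)k_{N,s}P_{s-1}/\delta(y)^{s-2}$), but only after identifying and combining $\beta_{-2}$ and $\beta_{-1}$ from an infinite sum; the ``collapsing algebra'' you flag as the main obstacle and leave unfinished is precisely the work that the closed-form recurrence avoids. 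So what your route buys is self-containment (no external recurrence formula needed), at the cost of more bookkeeping: one must also justify termwise application of $-\Delta_y$ to the binomial series near $\partial B$ (fine since $\rho=\delta(x)\delta(y)/|x-y|^2<1$ there and the coefficients grow only polynomially), and then carry through the $\beta_{-1},\dn^1\beta_{-2}$ combination explicitly — neither step is done in your sketch, but neither would fail. As a small aside, the published proof has its display labels transposed (it proves \eqref{complement} first while citing \eqref{comp1}, then \eqref{comp1} while citing \eqref{complement}), so don't be confused by that when comparing.
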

	
Lemma \ref{explicit-edenhofer} can be used to obtain a \emph{higher-order fractional Hopf-Lemma} for the homogeneous Dirichlet problem (see \cite[Theorem 5.7]{GGS10} for the case $s\in \N$).
\begin{cor}[Hopf Lemma]\label{hopf:lemma}
Let $s>1$, $\alpha\in(0,1)$, $2s+\alpha\not\in\N$, $f\in C^{\alpha}(\overline{B})\backslash\{0\}$ be nonnegative, and let $u\in \cH_0^s(B)$ be the unique weak solution of
\begin{align*}
(-\Delta)^{s} u=f\gneq 0\quad \text{ in }B\qquad\text{ with }\qquad u=0\quad \text{ on }\R^N\backslash\overline{B}.
\end{align*}
Then, $u\in C^{2s+\alpha}(B)\cap C^{s}(\overline{B})$ and, for $z\in\partial B$,
\begin{align} 
\dn^s u(z)&=\frac{m_{s-1}}{m_s}\dn^{s-2} \Delta u(z)=\int_B M_s(y,z)f(y)\ dy> 0,\nonumber\\
\dn^{s-1}(- \Delta) u(z)&=\frac{s-1}{4k_{N,s-1}}\int_B \frac{\delta(y)^{s}}{|y-z|^{N+2}}(N\delta(y)-(N-4)|y-z|^2)f(y)\ dy.\label{s:eq}
\end{align}
In particular, $\dn^{s-1} (-\Delta) u>0$ at $\partial B$ if $N\leq 4$.
\end{cor}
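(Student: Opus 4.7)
\textbf{Proof plan for Corollary \ref{hopf:lemma}.} The strategy is to first reduce $u$ to a pure Green-function integral via Theorem \ref{uniqueness:thm}, and then pass each of the higher-order boundary operators $\dn^{s-2}\Delta$, $\dn^{s-1}(-\Delta)$, $\dn^s$ through the integral, invoking Lemma \ref{explicit-edenhofer} on the Green function.

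First I would record the regularity. Interior Schauder estimates applied to the equation $(-\Delta)^su=f$ with $f\in C^\alpha(\overline B)$ (together with the Green-representation approach used elsewhere in the paper, cf.~\cite{G15:2}) yield $u\in C^{2s+\alpha}(B)$, and the boundary regularity $u\in C^s(\overline B)$ follows from the $\cL^1_\sigma\cap\cH_0^s(B)$ setting of homogeneous Dirichlet data. In particular, $\delta^{1-\sigma}u\in C^{m+\alpha}(\overline B)$, $u\equiv 0$ in $\R^N\setminus\overline{B}$, and, by the very definition of the space $\cH_0^s(B)$, all the traces $\dn^{k+\sigma-1}u$ for $k=0,1,\ldots,m$ vanish on $\partial B$.

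Next I would apply Theorem \ref{uniqueness:thm} directly to $u$. Every boundary and exterior term drops out, leaving simply
\begin{equation*}
u(x)=\int_B G_s(x,y)\,f(y)\,dy\qquad\text{for }x\in B.
\end{equation*}
From here I would apply the three relevant boundary operators under the integral sign, relying on the symmetry of $G_s$ to replace $\dn_x^\cdot G_s(x,y)|_{x\to z}$ by $\dn^\cdot[G_s(y,\cdot)](z)$, and then invoke Lemma \ref{explicit-edenhofer}. Concretely: \eqref{comp1a} gives $\dn^s[G_s(y,\cdot)](z)=M_s(y,z)$, yielding
\begin{equation*}
\dn^s u(z)=\int_B M_s(y,z)\,f(y)\,dy;
\end{equation*}
combining the two expressions for $E_{m,s}$ in \eqref{comp1a} and \eqref{complement} gives the identity $\dn^s G_s(y,\cdot)(z)=\tfrac{m_{s-1}}{m_s}\dn^{s-2}(\Delta G_s(y,\cdot))(z)$, which, integrated against $f$, produces $\dn^s u(z)=\tfrac{m_{s-1}}{m_s}\dn^{s-2}\Delta u(z)$; finally \eqref{comp1} supplies the explicit formula for $\dn^{s-1}(-\Delta)u(z)$ after dividing by $m_{s-1}$ and simplifying. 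A short justification of the interchange of $\dn^\cdot$ with the integral (using uniform boundary control of the kernels in Lemma~\ref{explicit-edenhofer}) will be needed; this is where I expect the most technical effort, and it is the main obstacle of the proof, since the kernels $\dn^{s-1}(-\Delta G_s)$ and $\dn^s G_s$ are singular along the diagonal and have nontrivial boundary decay of precise order $\delta^s$.

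For positivity, the explicit formula $M_s(y,z)=\tfrac{k_{N,s}}{s}\tfrac{(1-|y|^2)^s}{|y-z|^N}>0$ for $y\in B$, combined with $f\geq 0$, $f\not\equiv 0$, immediately gives $\dn^s u(z)>0$. For the final assertion, observe that when $N\leq 4$ we can rewrite
\begin{equation*}
N\delta(y)-(N-4)|y-z|^2=N\delta(y)+(4-N)|y-z|^2\geq 0,
\end{equation*}
and this quantity is strictly positive for $y\in B$ with $y\neq z$; multiplying by the positive factor $\delta(y)^s|y-z|^{-(N+2)}$ and by $f\geq 0$, $f\not\equiv 0$, the integrand is nonnegative and positive on a set of positive measure, so $\dn^{s-1}(-\Delta)u>0$ on $\partial B$.
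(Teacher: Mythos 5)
Your proposal takes essentially the same route as the paper: reduce $u$ to the pure Green-function integral $u(x)=\int_B G_s(x,y)f(y)\,dy$, then pass $\dn^s$, $\dn^{s-2}\Delta$, and $\dn^{s-1}(-\Delta)$ through the integral by dominated convergence and plug in Lemma~\ref{explicit-edenhofer}. The only difference is at the reduction step: the paper invokes \cite[Theorem 1.1]{AJS16b} directly to get the Green representation, whereas you go through Theorem~\ref{uniqueness:thm}, which forces you to verify the extra hypotheses $\delta^{1-\sigma}u\in C^{m+\alpha}(\overline B)$ and the vanishing of all traces $\dn^{k+\sigma-1}u$, $k=0,\dots,m$, on $\partial B$. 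That is doable (it follows from boundary regularity, not, as you write, ``by the very definition of $\cH_0^s(B)$''), but it is an unnecessary detour compared to citing the Green-function uniqueness result directly. The identity $\dn^s u=\tfrac{m_{s-1}}{m_s}\dn^{s-2}\Delta u$ indeed comes from equating the two expressions for $E_{m,s}$ in \eqref{comp1a} and \eqref{complement} as you say, and your argument for the positivity of $\dn^{s-1}(-\Delta)u$ when $N\le 4$ is the same sign analysis of $N\delta(y)-(N-4)|y-z|^2$ implicit in the statement. The dominated-convergence justification for the interchange of $\dn^\cdot$ and $\int_B$, which you correctly identify as the technical crux, is exactly what the paper invokes.
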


To close this introduction, we discuss some implications that the previous results have on positivity preserving properties for $(-\Delta)^s$ with $s>1$. In \cite[Theorem 1.1]{AJS16a} we showed that maximum principles fail for $(-\Delta)^s$ in disjoint sets if $s=m+\sigma$ with $m\in \N$ \emph{odd} and $\sigma\in(0,1)$, i.e. there is a positive function $f$, a disjoint set $\Omega\subset \R^N$, and a unique (weak) solution $u\in \cH^s_0(\Omega)$ such that $(-\Delta)^su=f$, but $u$ is sign-changing (see Theorem \ref{mp-fails} below).  

Using the Poisson kernel $\Gamma_s$ and Theorem \ref{poisson:thm} we complement this result by showing that, if $m$ is \emph{even}, then the Green function of two disjoint balls is \emph{positive} inside the domain $\Omega$.  In particular, this implies that the maximum principle \emph{does} hold for two (arbitrarily far away) disjoint balls if $m$ is even.  Moreover, the following result provides an alternative proof of the fact that maximum principles fail if $m$ is odd, since it guarantees that the Green function changes sign in this case.

To state the result we introduce some notation: let $\Omega(t):=B\cup B^t$, where $t>2$, $B^t:=B_1(te_1)$, denote by $G_{\Omega(t)}$ the Green function of $(-\Delta)^s$ in $\Omega(t)$ (see Proposition \ref{existenceuniqueness} for existence), and set
\begin{align}\label{T0:def}
T_0:=2+ |B|\gamma_{N,\sigma}=2+\left(\frac{2\sin(\pi\sigma)}{N\,\pi}\right)^{1/N}\in(2,3).
\end{align}
\begin{thm}\label{even:thm2}
Let $N\in \mathbb N$, $m\in\mathbb N_0$, $\sigma\in(0,1)$, $s=m+\sigma$, and $t>T_0$. Then 
\begin{align}\label{inB}
G_{\Omega(t)}&>0\qquad \text{ in }\quad \{(x,y)\in(B\times B)\cup (B^t\times B^t)\::\: x \neq y\},\\
G_{\Omega(t)}&>0\qquad \text{ in }\quad (B\times B^t)\cup (B^t\times B),\qquad \text{ if }\quad m \text{ is even}\label{niBe},\\
G_{\Omega(t)}&<0\qquad \text{ in }\quad (B\times B^t)\cup (B^t\times B),\qquad \text{ if }\quad m \text{ is odd}\label{niBo}.
\end{align}
\end{thm}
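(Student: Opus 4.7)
The plan is to construct $G_{\Omega(t)}$ explicitly as a convergent Neumann series built from Boggio's Green function $G_s$ on a single ball and the Poisson kernel $\Gamma_s$ of Theorem~\ref{poisson:thm}, and then to read the sign of each term off the factor $(-1)^m$ appearing in \eqref{PK}. Fix $x\in B$ and set $v(y):=G_{\Omega(t)}(x,y)$ for $y\in\R^N$. Since $t>T_0>2$ the two balls are strictly disjoint, so $v$ vanishes in a full neighborhood of $\partial B$ and of $\partial B^t$ from the outside, and in particular $v\equiv 0$ on $B_r\setminus\overline B$ for any $r\in(1,t-1)$. Setting $w:=v-G_s(x,\cdot)$ on $B$ cancels the singularity at $x$ so that $(-\Delta)^s w=0$ in $B$ with exterior datum $\psi:=v\,\mathbf{1}_{B^t}$; Theorem~\ref{poisson:thm} identifies $w$ with the unique $s$-harmonic extension of $\psi$ (the trace condition being automatic because $v$, and hence $w$, vanish identically just outside $\partial B$) and, after translating by $-te_1$, does the same for $v$ on $B^t$ (where $v$ is $s$-harmonic since $x\in B$). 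This produces the coupled system
\begin{align*}
v(y) &= G_s(x,y) + \int_{B^t}\Gamma_s(y,z)\,v(z)\,dz, \qquad y\in B,\\
v(y) &= \int_{B}\Gamma_s^t(y,z)\,v(z)\,dz, \qquad y\in B^t,
\end{align*}
where $\Gamma_s^t(y,z):=\Gamma_s(y-te_1,z-te_1)$ is the translated Poisson kernel of $B^t$.

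Substituting the second identity into the first gives the fixed-point equation $v|_B = G_s(x,\cdot) + T\,v|_B$ on $B$ for the integral operator
\begin{align*}
(Tf)(y) := \int_{B^t}\Gamma_s(y,z)\int_{B}\Gamma_s^t(z,w)\,f(w)\,dw\,dz, \qquad y\in B.
\end{align*}
The threshold $T_0$ in \eqref{T0:def} is chosen precisely so that $T$ becomes a strict contraction on a suitable Banach space once $t>T_0$: one bounds $|\Gamma_s(y,z)|$ using $\delta(y)^s\le 1$, $|y-z|\ge t-2$, and $(|z|^2-1)^s\ge (t(t-2))^s$ for $y\in B$ and $z\in B^t$, together with the analogous bound for $\Gamma_s^t$, the explicit evaluation $|B|\gamma_{N,\sigma}=(2\sin(\pi\sigma)/(N\pi))^{1/N}$ recorded in \eqref{T0:def} being exactly what is needed to close the inequality. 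Thus $v|_B=\sum_{n\ge 0}T^n G_s(x,\cdot)$ converges absolutely and, by the uniqueness in Proposition~\ref{existenceuniqueness}, coincides with $G_{\Omega(t)}(x,\cdot)|_B$; the restriction $v|_{B^t}$ is recovered by one further application of the $\Gamma_s^t$-integral to $v|_B$.

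The sign analysis is then immediate. Boggio's formula \eqref{green} gives $G_s(x,y)>0$ for all distinct $x,y\in B$. By \eqref{PK} the two kernels $\Gamma_s$ and $\Gamma_s^t$ each carry the sign $(-1)^m$ on the relevant product domains, so the kernel of $T$, being a product of two such factors, is pointwise nonnegative and $T$ preserves positivity. Hence $v(y)=\sum_{n\ge 0}T^n G_s(x,\cdot)(y)>0$ for every $y\in B\setminus\{x\}$, proving \eqref{inB} on $B\times B$; the $B^t\times B^t$ case follows by the translational symmetry $x\leftrightarrow x-te_1$. For $y\in B^t$, the identity $v(y)=\int_B\Gamma_s^t(y,z)\,v(z)\,dz$ integrates a strictly positive function against a kernel of sign $(-1)^m$, yielding $v(y)>0$ when $m$ is even and $v(y)<0$ when $m$ is odd; the symmetry $G_{\Omega(t)}(x,y)=G_{\Omega(t)}(y,x)$ handles the remaining slice $B^t\times B$, establishing \eqref{niBe} and \eqref{niBo}.

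The delicate points are twofold. First, one must verify in Step~1 that all higher-order traces $D^{k+\sigma-1}v$ on $\partial B$ and $\partial B^t$ vanish so that no Edenhofer terms contaminate the representation, and that $w$ has the regularity demanded by Theorem~\ref{poisson:thm} despite $v$ carrying a pole at $x$; both follow from the fact that $v$ vanishes identically in a neighborhood of each boundary on the outside, together with the interior smoothness and $\delta^s$-boundary behavior of the Green function. Second, and more substantially, the contraction estimate has to be executed carefully enough to reach the explicit threshold $T_0$: a crude $L^\infty\to L^\infty$ bound based only on the pointwise estimates above gives a threshold stricter than $T_0$, so the proof must be carried out in a weighted function space such as $\{f:\delta^{-s}f\in L^\infty(B)\}$, where the boundary decay $(1-|y|^2)^s$ of the Poisson kernel can be absorbed, or exploit the identity $\int_{\R^N\setminus\overline B}\Gamma_\sigma(y,z)\,dz=1$ valid for $\sigma\in(0,1)$, to match the cutoff declared in \eqref{T0:def}.
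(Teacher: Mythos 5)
Your overall strategy matches the paper's: set up the two coupled Poisson-kernel identities on $B$ and $B^t$, iterate the substitution, and read off the sign of each term from the factor $(-1)^m$ in $\Gamma_s$. However, there are two genuine gaps that keep this from being a complete proof.

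First, the contraction estimate is never actually closed, and you acknowledge as much. You correctly observe that a naive $L^\infty\to L^\infty$ bound misses the threshold $T_0$ and that a weighted norm such as $\|f\|_*:=\|\delta^{-s}f\|_{L^\infty(B)}$ is what is needed, but you do not supply the inequality that makes the weighted estimate work. The missing ingredient is exactly Lemma~\ref{Rt:lem}: for $z\in B$ and $t\geq 2$ one has $1-|z|^2\leq |R_tz|^2-1$, where $R_t$ is the reflection across $\{2x_1=t\}$. This bound is what makes the boundary weights from $\Gamma_s$ and $\Gamma_s^t$ cancel in the composed kernel, yielding $K(a,b)\leq\gamma_{N,\sigma}^2\big(\tfrac{|a|^2-1}{|b|^2-1}\big)^s\frac{|B|}{(t-2)^{2N}}$ and hence the sharp threshold $T_0=2+(|B|\gamma_{N,\sigma})^{1/N}$. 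Without this observation the iteration argument cannot be completed at the stated $T_0$. The paper sidesteps the Neumann-series formalism entirely: it iterates the pointwise inequality $G_\Omega(x,y)\geq\int_{B^t}G_\Omega(x,w)K(y,w)\,dw$ (reversed for $m$ odd) and shows the $n$-fold residual tends to zero, which needs only decay of $K^{(n)}$ rather than summability of the series, but the same telescoping via Lemma~\ref{Rt:lem} is the engine in either formulation.

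Second, your derivation of the coupled system leans on Theorem~\ref{poisson:thm}'s pointwise uniqueness for $w=v-G_s(x,\cdot)$, which requires $w\in C^s(\overline B)\cap H^s(B)$ and $(-\Delta)^s w=0$ pointwise in $B$; you assert these properties ("both follow from...") without proof, and the singularity of $v=G_{\Omega(t)}(x,\cdot)$ at $y=x$ makes the boundary regularity of $w$ nontrivial to verify. The paper avoids this entirely via Proposition~\ref{disjoint-sets-prop}, which derives the same identities by integrating against $\psi\in C^\infty_c$, observing the difference is a weak solution in $\cH_0^s(B)$, and invoking uniqueness of weak solutions together with the fundamental lemma of the calculus of variations. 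That duality argument is robust precisely because it never needs pointwise regularity of $v$ near $\partial B$ or at $y=x$. You should either adopt this weak-formulation route for the coupled system, or carefully justify the regularity of $w$ required by Theorem~\ref{poisson:thm}.
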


We do not expect that maximum principles hold in general for $m$ even, but a counterexample in this case is still missing, see \cite{AJS16a} for further discussions in this regard.

Finally, we show that maximum principles also fail for $(-\Delta)^s$ in \emph{connected domains} if $s=m+\sigma$ with $m\in\N$ odd, by connecting two disjoint balls with a thin tube and using a perturbation argument. This theorem complements our previous result \cite[Theorem 1.1]{AJS16a} (see Theorem \ref{mp-fails} below).  For the definition of weak solutions, see Subsection \ref{Notation} below.

\begin{thm}\label{connected:cor}
	Let $N\geq 2$, $m\in \N$ be an odd number, $\sigma\in(0,1)$, and $s=m+\sigma$. Moreover, let $\Omega=B_1(0)\cup B_1(3e_1)$, $L:=\{t e_1\::\: 0<t<3\}$, and 
	\begin{align*}
	\Omega_n=\Omega \cup \{\ x\in\R^N\::\: \operatorname{dist}(x\ ,\ L)<\frac{1}{n}\ \}\qquad \text{ for } n\in\N.
	\end{align*}
	There is $n\in\N$, a nonnegative function $f_n\in L^\infty(\Omega_n)$, and a weak solution $u_n\in\cH_0^{s}(\Omega_n)$ of $(-\Delta)^{s}u_n=f_n\geq 0$ in $\Omega_n$, $u=0$ on $\R^N\backslash\overline{B}$,
	such that $\operatorname{essinf}_{\Omega_n} u_n<0$ and $\operatorname{esssup}_{\Omega_n} u_n>0$.
\end{thm}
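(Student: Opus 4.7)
The plan is a domain-perturbation argument: transfer the sign-changing behaviour of the Green function on the disconnected reference set $\Omega=B\cup B_1(3e_1)$ given by Theorem~\ref{even:thm2} to the connected perturbations $\Omega_n$ for $n$ sufficiently large.

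\emph{Step 1 (reference solution).} Observe that $3>T_0$ by \eqref{T0:def}. Pick $f\in C^\infty_c(B_1(3e_1))$ nonnegative, not identically zero, with $\operatorname{supp} f$ staying away from the axis $L$, and let $u\in\cH_0^s(\Omega)$ be the weak solution of $(-\Delta)^s u=f$ in $\Omega$, represented by $u(x)=\int_\Omega G_{\Omega(3)}(x,y)f(y)\,dy$ via Proposition~\ref{existenceuniqueness}. Since $m$ is odd, \eqref{niBo} gives $G_{\Omega(3)}<0$ on $B\times B_1(3e_1)$ and \eqref{inB} gives $G_{\Omega(3)}>0$ on $B_1(3e_1)\times B_1(3e_1)$; since $G_{\Omega(3)}$ is smooth off the diagonal, $u$ is smooth on $B$ and on a neighbourhood of $\operatorname{supp} f$, strictly negative on $B$ and strictly positive on $\operatorname{supp} f$. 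Hence there exist $\varepsilon>0$ and compact sets $E_-\subset B$, $E_+\subset B_1(3e_1)$ of positive Lebesgue measure with $u\le-\varepsilon$ on $E_-$ and $u\ge\varepsilon$ on $E_+$.

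\emph{Step 2 (passage to the limit).} For each $n$, let $u_n\in\cH_0^s(\Omega_n)$ be the weak solution of $(-\Delta)^s u_n=f$ in $\Omega_n$, provided by Proposition~\ref{existenceuniqueness} applied to the bounded Lipschitz domain $\Omega_n$. Testing $\cE(u_n,u_n)=\int f u_n$ and using the fractional Poincar\'e inequality on the fixed bounded set $\bigcup_n\Omega_n\subset B_5(0)$ yields a uniform bound $\|u_n\|_{H^s(\R^N)}\le C\|f\|_{L^2}$. Along a subsequence, $u_n\rightharpoonup v$ in $H^s(\R^N)$; compact embedding into $L^2(B_5(0))$ and a further extraction give $u_n\to v$ in $L^2(\R^N)$ and a.e. Since $|\Omega_n\setminus\Omega|\to 0$, the limit $v$ vanishes a.e.\ on $\Omega^c$. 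For every $\phi\in C_c^\infty(\Omega)\subset C_c^\infty(\Omega_n)$, passing to the weak limit in $\cE(u_n,\phi)=\int f\phi$ gives $\cE(v,\phi)=\int f\phi$. The standard characterisation $\cH_0^s(\Omega)=\{w\in H^s(\R^N):w=0\text{ a.e.\ on }\Omega^c\}$ for the smooth domain $\Omega$ places $v\in\cH_0^s(\Omega)$; uniqueness from Proposition~\ref{existenceuniqueness} forces $v=u$, so the whole sequence satisfies $u_n\to u$ in $L^2(\R^N)$.

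\emph{Step 3 (conclusion).} Taking an a.e.-convergent sub-subsequence and applying Egorov's theorem on the compact sets $E_\pm$, for $n$ large one has $u_n\le-\varepsilon/2$ on a subset of $E_-\subset B\subset\Omega_n$ of positive measure, and $u_n\ge\varepsilon/2$ on a subset of $E_+\subset B_1(3e_1)\subset\Omega_n$ of positive measure. Hence $\operatorname{essinf}_{\Omega_n}u_n<0<\operatorname{esssup}_{\Omega_n}u_n$ for such $n$, proving the claim.

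The main obstacle is the identification $v=u$ in Step~2: one must confirm that the weak $H^s$-limit $v$ lies in $\cH_0^s(\Omega)$ rather than in the potentially larger space $\cH_0^s(\Omega\cup L)$, since the thin tubes $\Omega_n\setminus\Omega$ collapse onto the one-dimensional segment $L\setminus\Omega$, which could carry positive $s$-capacity when $s$ is large. For the smooth $\Omega$ considered here the characterisation by a.e.\ vanishing on $\Omega^c$ is standard; exceptional half-integer values of $s$ may be accommodated by perturbing $\sigma$ slightly, using the stability of the strict sign properties of $u$ in Step~1 under small variations of $s$.
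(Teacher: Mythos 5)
Your perturbation scheme in Steps~2--3 is essentially the argument the paper itself uses: uniform $\cH_0^s(\Omega_1)$ bound via Poincar\'e, extraction of a weakly and $L^2$-strongly convergent subsequence, identification of the limit with the reference solution by uniqueness, and conclusion from a.e.\ convergence on the fixed balls. The genuine difference lies in Step~1. The paper simply invokes Theorem~\ref{mp-fails} (Theorem~1.1 of \cite{AJS16a}), which hands over a sign-changing solution $u\in\cH_0^s(\Omega)\cap C(\R^N)$ with $u\lneq 0$ in $B$ and $u\gneq 0$ in $B_1(3e_1)$ directly, whereas you build the reference solution by convolving a one-signed datum $f$ with the Green function $G_{\Omega(3)}$ and reading off its sign from Theorem~\ref{even:thm2} (valid since $3>T_0$). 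That route is legitimate and has the merit of being self-contained within the paper's own results, but it costs a little care: for $x\in\operatorname{supp} f$ the Green function is near-diagonal, where positivity is not asserted (and for $2s>N$ the fundamental solution need not be positive near $0$), so the claim ``$u$ is strictly positive on $\operatorname{supp} f$'' should be replaced by positivity of $u$ on a small ball in $B_1(3e_1)$ \emph{away} from $\operatorname{supp} f$, where the integrand is pointwise positive; this gives the compact sets $E_\pm$ you need just as well.

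Two smaller points. First, for the existence of the approximating $u_n$ you should cite \cite[Corollary 3.6]{AJS16a} (which works for Lipschitz domains) rather than Proposition~\ref{existenceuniqueness}, whose Green-function construction assumes a smooth boundary, which $\Omega_n$ does not have. Second, the ``main obstacle'' you raise at the end---whether $v$ could live in $\cH_0^s(\Omega\cup L)$ rather than $\cH_0^s(\Omega)$ because $L$ might carry positive $s$-capacity---is not an issue here: the paper \emph{defines} $\cH_0^s(U)$ as the set of $H^s(\R^N)$ functions vanishing a.e.\ off $U$, so $\cH_0^s(\Omega\cup L)=\cH_0^s(\Omega)$ automatically because $L$ is a null set when $N\ge 2$ (this is exactly the remark the paper makes), and no perturbation of $\sigma$ is needed. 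Also note that testing only against $C_c^\infty(\Omega)$ suffices to identify $v$ once you invoke density of $C_c^\infty(\Omega)$ in $\cH_0^s(\Omega)$, which the paper uses elsewhere; the paper's proof tests directly against all $\varphi\in\cH_0^s(\Omega)$, which is slightly cleaner.
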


The paper is organized as follows: in Subsection \ref{Notation} we specify the notation and conventions that are used in the rest of the paper, in Section \ref{Auxiliary:sec} we collect a series of useful remarks and results of independent interest regarding the higher-order trace operator, integration by parts, regularity, and a proof of (a variant of) Almansi's formula Lemma \ref{Almansi:l}. The proof of Theorem \ref{poisson:thm} is contained in Section \ref{sectionproof1} together with our discussion on maximum principles, where the proofs of Theorems \ref{even:thm2} and \ref{connected:cor} are given.
In Section \ref{IRF:sec} we focus our attention to explicit solutions and integral representation formulas with boundary kernels and it includes the proof of Theorems \ref{main:thm}, \ref{uniqueness:thm}, \ref{uniqueness:thm:2}, Lemma \ref{explicit-edenhofer}, and Corollaries \ref{Almansi:cor} and \ref{hopf:lemma}.

\subsection{Notations}\label{Notation}

Let $N\in \N$ and $U,D\subset \R^N$ be nonempty measurable sets. We denote by $\chi_U: \R^N \to \R$ the characteristic function and by $|U|$ the Lebesgue measure of $U$.  We fix $B:=B_1(0)$ and $B_r:=B_r(0)$ for $r>0$. For any $s\in\R$, we define as usual $H^s(\R^N):=\left\{u\in L^2(\R^N)\;:\; (1+|\xi|^{2})^{\frac{s}{2}}\ \widehat u\in L^2(\R^N)\right\},$ where $\widehat u$ denotes the Fourier transform. Moreover, if $U$ is open, $m\in \N_0$, $\sigma\in[0,1)$, and $s=m+\sigma$, we define $\cH^{s}_0(U)$ as 
\begin{align*}
\cH^{s}_0(U)&:=\{u\in H^{s}(\R^N)\;:\; u= 0\;\text{on $\R^N\setminus U$}\}
\end{align*}
equipped with the norm 
$\|u\|_{\cH^s_0(U)}:=(\sum_{|\alpha|\leq m}\|\partial^{\alpha} u\|_{L^2(U)}^2+\cE_{s}(u,u))^{\frac{1}{2}}$, 
where
\begin{equation}\label{scalar:p}
\cE_{s}(u,v):=\left\{\begin{aligned}
&\cE_{\sigma}((-\Delta)^{\frac{m}{2}}  u,(-\Delta)^{\frac{m}{2}} v),&& \quad \text{if $m$ is even,}\\
&\sum_{k=1}^{N}\cE_{\sigma}(\partial_k (-\Delta)^{\frac{m-1}{2}} u,\partial_k (-\Delta)^{\frac{m-1}{2}} v),&&
\quad  \text{if $m$ is odd,}
\end{aligned}\right.
\end{equation}
for $u,v\in \cH^{s}_0(U)$ (see \cite{AJS16a,AJS16}), where $\cE_{0}(u,v)=(u,v)_{L^2(\R^N)}$ and, for $\sigma\in(0,1)$,
\begin{align}\label{cnsigma}
\cE_{\sigma}(u,v):=\frac{c_{N,\sigma}}{2}\int_{\R^N}\int_{\R^N}\frac{(u(x)-u(y))(v(x)-v(y))}{|x-y|^{N+2\sigma}}\ dxdy,%
\quad\ c_{N,\sigma}:=\frac{4^{\sigma}\Gamma(\frac{N}{2}+\sigma)}{\pi^{\frac{N}{2}}|\Gamma(-\sigma)|}.
\end{align}
 Let $U$ be a Lipschitz open set. We say that $u\in\cH_0^s(U)$ is a weak solution of $(-\Delta)^s u=f$ in $U$ with $u=0$ in $\R^N\backslash U$ if 
\begin{align*}
\cE_{s}(u,\varphi)=\int_B f(x)  \varphi(x)\ dx\qquad \text{ for all }\varphi \in \cH_0^s(U).
\end{align*}
If $U$ has Lipschitz boundary, we put $H^s(U):=\{u \chi_{U}\;:\; u\in H^s(\R^N)\}$. Note that, in general, $H^s(U)\neq \cH^s_0(U)$.

\medskip

For $m\in \N_0$ and $U$ open we write $C^{m,0}(U)$ to denote the space of $m$-times continuously differentiable functions in $U$ and, for $\sigma\in(0,1]$ and $s=m+\sigma$, we write $C^s(U):=C^{m,\sigma}(U)$ to denote the space of functions in $C^{m,0}(U)$ whose derivatives of order $m$ are (locally) $\sigma$-H\"older continuous in $U$ or (locally) Lipschitz continuous in $U$ if $\sigma=1$. We denote by $C^s(\overline{U})$ (if $\partial U$ is smooth enough) the set of functions $u\in C^s(U)$ such that 
\begin{align}\label{Hn}
\|u\|_{C^s(U)}:=\sum_{|\alpha|\leq m}\|\partial^{\alpha}u\|_{L^{\infty}(U)}+\sum_{|\alpha|=m}\ \sup_{\substack{x,y\in U \\x\neq y}}\  \frac{|\partial^{\alpha}u(x)-\partial^{\alpha}u(y)|}{|x-y|^{\sigma}}<\infty.
\end{align}
Moreover, for $s\in(0,\infty]$, $C^s_c(U):=\{u\in C^s(\R^N): \supp \ u\subset\subset U\}$ and $C^s_0(U):=\{u\in C^s(\R^N): u= 0 \text{ on $\R^N\setminus U$}\}$, where $\supp\ u:=\overline{\{ x\in U\;:\; u(x)\neq 0\}}$ is the support of $u$.  
\medskip

We use $u^+:=u_+:= \max\{u,0\}$ and $u^-:=-\min\{u,0\}$ to denote positive and negative part of $u$ respectively.  If $f: \R^N\times\R^N\to \R$ we write $(-\Delta)_x^s f(x,y)$ to denote derivatives with respect to $x$, whenever they exist in some appropriate sense. 
Whenever it is meant in the pointwise sense, we write $(-\Delta)^s u$ to denote $(-\Delta)^m(-\Delta)^\sigma u$, where the fractional Laplacian $(-\Delta)^\sigma u$ is evaluated pointwisely as in \eqref{fractional}.
We sometimes write $\partial_\nu$ instead of $\partial_r$ or $\frac{\partial}{\partial r}$, but these notations refer to the differential operator $\frac{x}{|x|}\cdot \nabla u$ evaluated at $\partial B$.
\medskip

Let $\omega_N:=|\partial B|=2\pi^{\frac{N}{2}}\ \Gamma(\frac{N}{2})^{-1},$ where $\Gamma$ denotes the usual \emph{Gamma function}. We frequently use the following normalization constants
\begin{align}\label{c}
k_{N,s}&:=\frac{2^{1-2s}}{\omega_N{\Gamma(s)}^{2}},\quad m_{s}:={\frac{2\, s\, k_{N,1}}{k_{N,s}}}
= \frac{\Gamma(s)\Gamma(s+1)}{2^{1-2s}},\quad \gamma_{N,\sigma}:=\frac{2}{ \Gamma(\sigma)\,\Gamma(1-\sigma)\omega_N}.
\end{align}

For $x\in \R^N$, $s>0$ and a suitable $u:\R^N\to\R$, we denote 
\begin{equation}\label{GHHs}
 \begin{aligned}
 \cG_s u(x)&:=\int_B G_{s}(x,y)(-\Delta)^{s} u(y)\ dy,\\
 \cH u(x)&:=\int_{\partial B}2M_1(x,z) u(z)\ dy,\\
\cH_s u(x)&:=\left\{\begin{aligned}
&\int_{\R^N\backslash\overline{B}}\Gamma_s(x,y) u(y)\ dy+\chi_{\mathbb R^N\backslash\overline{B}}(x)u(x)&& \quad \text{for}\ s\in (0,\infty)\backslash\N,\\
&0,&&
\quad  \text{for $s\in\N$,}
\end{aligned}\right.
\end{aligned}
\end{equation}
where $G_s$ is as in \eqref{green}, $\Gamma_s$ is as in \eqref{PK}, and $M_1$ is as in \eqref{martin} with $s=1$. As usual, in dimension one ($N=1$), the boundary integral is meant in the sense $\int_{\partial B}f(\theta)\ d\theta=f(-1)+f(1)$.  Finally, we set $\delta(x):=(1-|x|^2)_+$ and, with a slight abuse of notation, we simply write $\delta^{-\alpha},\ \alpha>0,$ to denote the function $x\mapsto \delta(x)^{-\alpha}$ for $x\in B$ and $x\mapsto 0$ for $x\in \R^N\backslash\overline{B}$.

\section{Preliminary results}\label{Auxiliary:sec}
 
In this section we collect a series of remarks and results of independent interest that we use in the following sections for the proofs of our main theorems. 
 
\subsection{Higher-order trace operators}\label{remarks-derivative}
We begin with some remarks on the trace operators $\dn^{k+\sigma-1}$ defined in \eqref{dn-vs-partial}. Observe that, by definition, if $m\in \N$ and $u\in C^{m,0}(B)$, then
\begin{align}\label{page10}
\dn^{m}u(x)=\frac{(-1)^m}{m!}\frac{\partial^m}{\partial (|x|^2)^m}\ u(x)=\frac{(-1)^m}{m!}\left(\frac{1}{2|x|}\frac{x}{|x|}\cdot\nabla\right)^m u(x), \qquad x\in B,
\end{align}
where the factor $\frac{(-1)^m}{m!}$ is a normalization constant used to simplify calculations. In particular, note that the following variant of Leibniz's rule holds
 \begin{align*}
\dn^{m+\sigma-1}(uv)=\sum_{k=0}^{m}\dn^{k+\sigma-1}u\ \dn^{m-k}v\qquad \text{for suitable functions $u$ and $v$.}
\end{align*}
In general, $\lim\limits_{x\to z}\frac{\partial^m}{\partial (|x|^2)^m}\ {u}\neq \lim\limits_{x\to z}(\frac{1}{2}\partial_\nu)^m\ {u}$; indeed, let $m=2$ and ${u}$ be a smooth function, 
then for $x\in B$ and $r=|x|$,
\begin{align*}
{\frac{\partial^2}{\partial (r^2)^2}\ u=
\left(\frac{1}{2r}\partial_r\right)\left(\frac{1}{2r}\partial_r\right)u=
\frac{1}{4r}\left(-\frac1{r^{2}}\partial_r u+\frac1r\partial_{rr} u\right)}\qquad \text{ in }B,
\end{align*}
therefore $\lim\limits_{x\to z}\frac{\partial^2}{\partial (|x|^2)^2}\ {u(x)}=\frac{1}{4}(- \partial_{\nu} {u}+(\partial_{\nu})^2{u})(z)\neq \frac{1}{4}(\partial_{\nu})^2{u}(z)$ for $z\in\partial B$ if $\partial_\nu w\neq 0$ at $\partial B$.  Nevertheless, if the traces $\dn^{k+\sigma-1} u$ are known at $\partial B$ for $k=0,\ldots,m$ and $\sigma\in(0,1]$, then one can \emph{infer} the values of 
$\lim\limits_{x\to z}(\partial_\nu)^k[\delta(x)^{1-\sigma}u(x)]$ at $\partial B$ and vice-versa. See also Remark \ref{K:Equiv} below.
Moreover, note that $\lim\limits_{r\to 1}(\frac{\partial}{\partial r})^m(1-r)^m = (-1)^m\, m!$, and therefore
\begin{align*}
\dn^{m+\sigma-1}\delta^{m+\sigma-1}(\theta)=1\qquad \text{ and }\qquad \dn^{m+\sigma-1}\delta^{k+\sigma-1}(\theta)=0\quad \text{ for } k\in \N_0,\ k\neq m,\ \theta\in \partial B.
\end{align*}
A very useful property of the trace $\dn^{k+\sigma-1}$ is given in the following lemma.
\begin{lemma}\label{prop:de}
If $u:\R^N\to \R$ is such that $\delta^{1-\sigma} u\in C^{m,0}(\overline{B})$ and $\dn^{k+\sigma-1} u =0$ at $\partial B$ for $k= 0,\ldots,m-1$, then 
\begin{align}
\dn^{m+\sigma-1}u(z) =\lim_{x\to z}\frac{u(x)}{\delta(x)^{m+\sigma-1}} \qquad \text{ for }\ z\in\partial B.\label{p:4}
\end{align}
In particular, for $f\in C^{\alpha}(\overline{B})$, $\alpha>0$,
\begin{align}  
\dn^{k+\sigma-1}\Big(\int_B G_{k+\sigma-1}(x,\cdot)\,f(x)\;dx\Big)(z)=\int_B M_{k+\sigma-1}(x,z) f(x)\ dx\quad  \text{ for }x\in B,\ z\in\partial B.\label{p:5} \end{align}
\end{lemma}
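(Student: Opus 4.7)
The plan is to reduce the first statement \eqref{p:4} to a one-dimensional Taylor expansion along the inward normal, and then to deduce \eqref{p:5} from \eqref{p:4} together with the boundary asymptotics of Boggio's kernel.

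For \eqref{p:4}, I would set $v := \delta^{1-\sigma} u$, which by assumption belongs to $C^{m,0}(\overline{B})$. Fix $z \in \partial B$ and restrict $v$ to the normal ray through $z$ by $\widetilde v(t) := v(\sqrt{t}\,z)$ for $t \in [0,1]$. Because $t \mapsto \sqrt{t}$ is smooth near $t=1$, $\widetilde v$ is $C^{m,0}$ near $t=1$, and the normalization in \eqref{dn-vs-partial} is chosen precisely so that $\dn^{k+\sigma-1} u(z) = \frac{(-1)^k}{k!}\widetilde v^{(k)}(1)$ for every $k = 0,\ldots,m$. Hence the vanishing hypothesis becomes $\widetilde v^{(k)}(1)=0$ for $k=0,\ldots,m-1$, and Taylor's theorem with Peano remainder yields
\[
\widetilde v(t) \;=\; \frac{\widetilde v^{(m)}(1)}{m!}(t-1)^m + o\bigl((t-1)^m\bigr) \qquad\text{as } t\to 1^-.
\]
Since $u(\sqrt{t}\,z) = (1-t)^{\sigma-1}\widetilde v(t)$ and $\delta(\sqrt{t}\,z)^{m+\sigma-1} = (1-t)^{m+\sigma-1}$, division and the identity $(t-1)^m/(1-t)^m=(-1)^m$ give $\lim_{t\to 1^-} u(\sqrt{t}\,z)/\delta(\sqrt{t}\,z)^{m+\sigma-1} = \frac{(-1)^m}{m!}\widetilde v^{(m)}(1) = \dn^{m+\sigma-1}u(z)$, which is \eqref{p:4}.

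For \eqref{p:5}, set $s := k+\sigma-1$ and $w(y) := \int_B G_s(x,y)f(x)\,dx$. The strategy is to apply \eqref{p:4} to $w$ with $m$ replaced by $k$: once it is known that $\delta^{1-\sigma} w \in C^{k,0}(\overline{B})$ and $\dn^{l+\sigma-1}w \equiv 0$ on $\partial B$ for $l = 0,\ldots, k-1$, formula \eqref{p:4} reduces the trace to
\[
\dn^{k+\sigma-1}w(z) \;=\; \lim_{y\to z} \frac{w(y)}{\delta(y)^s} \;=\; \lim_{y\to z}\int_B \frac{G_s(x,y)}{\delta(y)^s}\,f(x)\,dx.
\]
The pointwise convergence $G_s(x,y)/\delta(y)^s \to M_s(x,z)$ is the defining property \eqref{martin} of the Martin kernel, and the elementary estimate $\int_0^{\rho(x,y)}\tau^{s-1}(\tau+1)^{-N/2}\,d\tau \leq \rho(x,y)^s/s$, combined with Boggio's formula \eqref{green}, furnishes the uniform majorant $\frac{k_{N,s}}{s}\,\delta(x)^s|x-y|^{-N}$, which is integrable in $x$ for $y$ close to $z$ and $f$ bounded. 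Dominated convergence then yields $\int_B M_s(x,z)f(x)\,dx$, as claimed.

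The main obstacle is the verification of the regularity $\delta^{1-\sigma} w \in C^{k,0}(\overline{B})$ and the vanishing of the lower-order traces $\dn^{l+\sigma-1} w$ for $l < k$. This amounts to differentiating $\delta^{1-\sigma} G_s(x,\cdot)$ up to order $k$ under the integral sign and controlling the resulting integrals uniformly up to $\partial B$; the explicit form of Boggio's kernel together with the H\"older continuity of $f$ is designed to make this computation tractable, and the same conclusion can alternatively be invoked from the boundary regularity theory in \cite{AJS16b,G15:2}. Once the regularity is secured, the remainder of the proof is the routine application of dominated convergence described above.
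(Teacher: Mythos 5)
Your proof of \eqref{p:4} is correct and follows essentially the paper's route: reduce to a one-dimensional Taylor expansion along the inward normal. The parameterization $\widetilde v(t)=v(\sqrt{t}\,z)$ is in fact slightly tidier than the paper's choice $f(t)=v((1-t)z)$, since it directly computes $\frac{\partial^k}{\partial(|x|^2)^k}v$ along the ray and avoids the paper's intermediate step of passing between $\partial/\partial(|x|^2)$ and $\partial/\partial|x|$ derivatives before invoking Taylor's theorem.

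Your outline for \eqref{p:5} also has the same structure as the paper's (the paper uses the rescaled Boggio representation \eqref{Gs:decomp} to obtain the needed regularity and the vanishing of lower-order traces, and then applies \eqref{p:4} together with \eqref{martin}). However, the specific dominated-convergence majorant you propose, $\tfrac{k_{N,s}}{s}\,\delta(x)^s|x-y|^{-N}$, is \emph{not} integrable over $B$ in the variable $x$ for $y$ in the interior of $B$: near $x=y$ one has $\delta(x)\approx\delta(y)>0$, and $\int_{|x-y|<\varepsilon}|x-y|^{-N}\,dx$ diverges for every $\varepsilon>0$. The crude bound $\int_0^\rho \tau^{s-1}(\tau+1)^{-N/2}\,d\tau\le\rho^s/s$ is useful away from the diagonal, where $\rho(x,y)$ is small, but wasteful near it, where $\rho(x,y)$ is large and the decay of the integrand in $\tau$ is what saves you. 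A correct argument should start from the rescaled form \eqref{Gs:decomp}, which gives
\[
\frac{G_s(x,y)}{\delta(y)^s}=k_{N,s}\,\delta(x)^s\int_0^1\frac{\tau^{s-1}}{\bigl(\delta(x)\delta(y)\tau+|x-y|^2\bigr)^{N/2}}\,d\tau,
\]
and then extract an integrable majorant by splitting the $x$-integration between a small neighborhood of the diagonal and its complement (or, alternatively, appeal to sharp two-sided Green-function estimates). Apart from this, the rest of your argument is sound.
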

\begin{proof}
Set $v=\delta^{1-\sigma}u$ and $u$ as in the statement. Since $\frac{\partial^k}{\partial (|x|^2)^k}{v}(z)=0$ for $k=0,\ldots,m-1$, we have that $\Big(\frac{\partial}{\partial |x|}\Big)^kv(z)=0$ for $k=0,\ldots,m-1$ and therefore 
$\frac{\partial^m}{\partial (|x|^2)^m}{v}(z)=\frac{1}{2^m}\Big(\frac{\partial}{\partial |x|}\Big)^m v(z)$. For $t\in(0,1)$, let $f(t):={v}((1-t)z)$.  Then, using a Taylor expansion at 0,
\begin{align*}
f(t)=\sum_{k=0}^m \frac{1}{k!}f^{(k)}(0)\ t^k+o(t^m)=\frac{1}{m!}\frac{\partial^m}{\partial (|x|)^m}{v}(z)\ (-t)^m+o(t^m)\quad\text{ as }t\to 0.
\end{align*}
But then
\begin{align*}
\frac{(-1)^m}{m!}\frac{\partial^m}{\partial (|x|^2)^m}{v}(z)=\frac{(-1)^m}{ m!\ 2^m}\frac{\partial^m}{\partial (|x|)^m}{v}(z) = \lim_{t\to 0} \frac{{v}((1-t)z)}{(2-t)^mt^m}
=\lim_{t\to 0} \frac{{v}((1-t)z)}{\delta((1-t)z)^m}=  \lim_{x\to z} \frac{{v}(x)}{\delta(x)^m},
\end{align*}
which shows \eqref{p:4}. Finally, let $x,z\in \R^N$ with $x\neq z$ and $\rho(x,z)=\delta(x)\delta(z)|x-z|^{-2}$.  Using a change of variables {$t=\rho(x,z)\tau$ in formula \eqref{green} }we have that
\begin{align}\label{Gs:decomp}
G_s(x,y)=k_{N,s} \delta(x)^s\ \int_0^1 \frac{\tau^{s-1} \delta(y)^s}{( \delta(x) \delta(y)\tau +|x-y|^2)^{\frac{N}{2}}}\ d\tau.
\end{align}	
Then, for $f\in C^\alpha(\overline{B})$, the function $u(y):=\int_B G_{k+\sigma-1}(x,y) f(x)\ dx$ satisfies that $\delta^{1-\sigma} u\in C^{k{,0}}(\overline{B})$ and $\dn^{i+\sigma-1} u =0$ at $\partial B$ for $i=0,\ldots,k-1$, thus \eqref{p:5} follows as a consequence of \eqref{p:4} and \eqref{martin}.
\end{proof}

\subsection{A variant of Almansi's formula in balls}

The following is a useful decomposition of polyharmonic functions. Its proof relies in the so-called \emph{Almansi's formula}, see \cite{A1899,N35}, which decomposes an $m$-harmonic function into a finite sum of harmonic functions with polynomial coefficients of the type $|x|^{2k}$ with $k\leq m$.  Recall that $\cH$ denotes the standard harmonic extension, see \eqref{GHHs}.

\begin{lemma}\label{Almansi:l}
Let $m\in \N_0$ and 
\begin{equation}\label{v12}
v\in C^{2m+2,0}(B)\cap C^{m{,0}}(\overline{B})\qquad \text{ satisfy }\qquad \Delta^{m+1}v=0\qquad \text{ in } B.
\end{equation}
There are unique functions $h_k\in C(\partial B),$ $k=0,\ldots,m$, such that 
\begin{align}\label{Almansi:f}
v(x)=\sum_{k=0}^m\delta^{k}(x)\ch h_k(x)=\sum_{k=0}^m m_{k+1} \int_{\partial B}M_{k+1}(x,\theta)\ h_k(\theta)\ d\theta\qquad \text{ for }x\in B.
\end{align}
\end{lemma}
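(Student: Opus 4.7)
The plan is to reduce to the classical Almansi formula and then change basis from $\{|x|^{2k}\}_{k=0}^{m}$ to $\{\delta(x)^{k}\}_{k=0}^{m}$. More precisely, since $\Delta^{m+1}v = 0$ in $B$ and $v\in C^{2m+2,0}(B)$, the classical Almansi decomposition (see \cite{A1899,N35}) yields unique harmonic functions $\tilde u_{0},\ldots,\tilde u_{m}\in C^{\infty}(B)$ such that
\begin{align*}
v(x)\ =\ \sum_{k=0}^{m}|x|^{2k}\,\tilde u_{k}(x),\qquad x\in B.
\end{align*}
Using $|x|^{2k}=(1-\delta(x))^{k}=\sum_{j=0}^{k}\binom{k}{j}(-\delta(x))^{j}$ and rearranging the double sum one obtains
$v(x)=\sum_{j=0}^{m}\delta(x)^{j}\,w_{j}(x)$ in $B$, where each
$w_{j}=(-1)^{j}\sum_{k=j}^{m}\binom{k}{j}\tilde u_{k}$
is harmonic in $B$.

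The key step, and presumably the main obstacle, is to upgrade each harmonic $w_{j}$ to an extension $w_{j}=\cH h_{j}$ with $h_{j}\in C(\partial B)$, which is where the hypothesis $v\in C^{m,0}(\overline{B})$ enters. I would proceed by induction on $j$. For $j=0$, taking the boundary trace of $v=\sum_{j=0}^{m}\delta^{j}w_{j}$ and using $\delta|_{\partial B}=0$ gives $w_{0}|_{\partial B}=v|_{\partial B}=:h_{0}\in C(\partial B)$, so that $w_{0}=\cH h_{0}$ by the uniqueness of the bounded harmonic extension (in fact, $h_{0}=\dn^{0}v$ in the notation of Subsection~\ref{remarks-derivative}). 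For the inductive step, once $w_{0},\ldots,w_{j-1}$ are known to extend continuously to $\overline{B}$, the remainder
\begin{align*}
R_{j}(x)\ :=\ v(x)-\sum_{i=0}^{j-1}\delta(x)^{i}\,w_{i}(x)\ =\ \delta(x)^{j}\sum_{k=j}^{m}\delta(x)^{k-j}w_{k}(x)
\end{align*}
vanishes to order $j$ at $\partial B$ in the appropriate sense coming from the $C^{m,0}(\overline{B})$-regularity of $v$ and of the already constructed $w_{i}$'s; dividing by $\delta^{j}$ and passing to the boundary then identifies $w_{j}|_{\partial B}=:h_{j}\in C(\partial B)$, and uniqueness of the bounded harmonic extension yields $w_{j}=\cH h_{j}$. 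This regularity peeling is the technical core of the argument and relies essentially on the Leibniz-type behavior of the traces $\dn^{k}$ collected in Subsection~\ref{remarks-derivative} (in particular $\dn^{k}\delta^{j}=\delta_{kj}$ at $\partial B$), which formalize the ``division by $\delta^{j}$''.

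The second equality in \eqref{Almansi:f} is then a purely algebraic consequence of the explicit formulas \eqref{martin} and \eqref{c}: a direct computation gives the identity
\begin{align*}
m_{k+1}\,M_{k+1}(x,\theta)\ =\ 2\,\delta(x)^{k}\,M_{1}(x,\theta),\qquad x\in B,\ \theta\in\partial B,
\end{align*}
which combined with the definition $\cH h(x)=\int_{\partial B}2M_{1}(x,z)h(z)\,dz$ in \eqref{GHHs} converts $\delta^{k}\cH h_{k}$ into $m_{k+1}\int_{\partial B}M_{k+1}(x,\theta)h_{k}(\theta)\,d\theta$. Finally, for uniqueness of the $h_{k}$, suppose $\sum_{k=0}^{m}\delta^{k}\cH h_{k}\equiv 0$ in $B$. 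Taking boundary values gives $h_{0}\equiv 0$, hence $\cH h_{0}\equiv 0$; dividing the remaining relation by $\delta$ yields $\sum_{k=1}^{m}\delta^{k-1}\cH h_{k}\equiv 0$ in $B$, and iterating this peeling $m$ times forces $h_{1}=\cdots=h_{m}=0$.
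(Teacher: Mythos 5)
Your overall plan is sound and closely parallels the paper's second step: the binomial rearrangement from $\{|x|^{2k}\}$ to $\{\delta^{j}\}$ is correct (I checked $w_j = (-1)^j\sum_{k\ge j}\binom{k}{j}\tilde u_k$), and the identity $m_{k+1}M_{k+1}(x,\theta) = 2\,\delta(x)^k M_1(x,\theta)$ giving the second equality in \eqref{Almansi:f} also checks out against \eqref{martin} and \eqref{c}. The uniqueness peeling at the end is fine because there you already have $\cH h_k \in C(\overline{B})$ by hypothesis.

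The genuine gap is in the existence step, exactly where you flag ``the technical core'': you invoke an abstract Almansi theorem that delivers harmonic pieces $\tilde u_k$ (hence $w_j$) only on the \emph{open} ball, and then try to upgrade them to bounded harmonic extensions a posteriori. But your peeling argument is circular at the very first step: to pass to the boundary in $v = w_0 + \sum_{j\ge 1}\delta^j w_j$ and deduce $w_0|_{\partial B} = v|_{\partial B}$, you need $\delta^j w_j\to 0$, i.e.\ $w_j = o(\delta^{-j})$, and to ``invoke uniqueness of the bounded harmonic extension'' for $w_0$ you first need $w_0$ to be bounded — neither is a priori available for harmonic functions produced by an abstract Almansi decomposition, which may in principle blow up near $\partial B$ (the hypothesis $v\in C^{m,0}(\overline{B})$ does not obviously forbid mutually cancelling blow-up between the $\delta^j w_j$ terms). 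The subsequent ``divide by $\delta^j$ and pass to the boundary'' step has the same issue; the remark that $R_j$ ``vanishes to order $j$ in the appropriate sense'' is precisely what needs proof. The paper sidesteps this entirely: rather than upgrading regularity after the fact, its proof of \eqref{A:d} is an induction that \emph{constructs} each piece $h_k$ explicitly via the dilated-average formula \eqref{hk:def} applied to the already-bounded harmonic extension $\cH g_{k-1}$, which makes $h_k\in C(\overline{B})$ manifest, and then sets $h_0 := v - \sum_{k\ge 1}|x|^{2k}h_k$, continuous because both summands are. That constructive version (essentially Nicolesco's, one of your own citations) is what you would need to carry out in place of the abstract Almansi plus peeling.
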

\begin{proof}
We follow closely \cite[Proposition 1.3]{ACL83}. Let $v$ as in \eqref{v12}. We show first that there are unique $h_k\in C(\partial B),$ $k=0,\ldots,m$, such that 
\begin{align}\label{A:d}
v(x)=\sum_{k=0}^m|x|^{2k}\ch h_k(x)\qquad \text{ for }x\in B.
\end{align}
We argue by induction on $m$.  For $m=0$ we have that $v\in C^{2,0}(B)\cap C(\overline{B})$ and $\Delta v=0$ in $B$.  Since $v\in C(\overline{B})$ we have that $v(x)=\int_{\partial B} 2M_1(x,\theta) v(\theta)\ d\theta = \ch v(x)$ for $x\in B$, by Green's representation. Hence \eqref{A:d} holds with $h_0=v$ and the uniqueness follows from the maximum principle. Now, let $m\in\mathbb N$ and assume that \eqref{A:d} holds for any function $w\in C^{2m,0}(B)\cap C^{m-1{,0}}(\overline{B})$ satisfying $\Delta^{m}w=0$ in $B$. Let $v$ {as in \eqref{v12}}. 
Since $\Delta v$ is $m$-harmonic, there are unique $g_k\in C(\partial B),$ $k=0,\ldots,m$ such that 
\begin{align}\label{Lap:v}
\Delta v(x) = \sum_{k=0}^{m-1}|x|^{2k}\ch g_k(x)\qquad \text{ for }x\in B.
\end{align}
For $k=1,\ldots,m$, let $\widetilde g_{k-1}:= \ch g_{k-1}$ and $h_k\in C^{2{,0}}(B)\cap C(\overline{B})$ be given by 
\begin{align}\label{hk:def}
h_k(x):=\frac{1}{4k}\int_0^1 \widetilde g_{k-1}(\tau x)\tau^{k-2+\frac{N}{2}}\ d\tau\qquad \text{ for }x\in B. 
\end{align}
Note that $h_k$ is the unique continuous solution (\emph{cf.}  \cite[Th\'{e}or\`{e}me 1]{N35}) of 
\begin{align*}
4k(k-1+\frac{N}{2})h_k(x)+4k\ |x|\frac{\partial}{\partial |x|}h_k(x) = \widetilde g_{k-1}(x)\qquad \text{ for }x\in B,\ \ k=1,\ldots,m.
\end{align*}
By \eqref{hk:def} we have, for $k=1,\ldots,m$, that $\Delta h_k =0$ in $B$, $h_k=\ch h_k$ (because $h_k\in C(\overline{B})$) and, by a simple calculation, $\Delta(|x|^{2k}h_k(x)) = |x|^{2(k-1)}\widetilde g_{k-1}(x)$ for $x\in B$.  Then, by setting $h_0(x):=v(x)-\sum_{k=1}^{m}|x|^{2k}h_k(x)$ for $x\in B$, we obtain that
\begin{align*}
v(x)=\sum_{k=0}^{m}|x|^{2k}h_k(x)\qquad \text{ for }x\in B.
\end{align*}
Furthermore, by \eqref{Lap:v}, $\Delta h_0 = \Delta v - \sum_{k=1}^{m}\Delta(|x|^{2k}h_k(x))= \Delta v - \sum_{k=0}^{m-1}|x|^{2k}\widetilde g_k(x) =0$ in $B$ and $h_0=\ch h_0$ in $B$ since $h_0\in C(\overline{B})$. Since the uniqueness of $h_0$ can be inferred from the uniqueness of $h_k$ with $k\geq 1$, we have that \eqref{A:d} holds for any $m\in\N$. 

Similar arguments also yield that if $\varphi(x):=\sum_{k=0}^{m-1}|x|^{2k}\ch h_k(x)$ for $x\in B$ and $h_k\in C(\partial B),$ $k=0,\ldots,m-1$, then $\varphi$ is $m$-harmonic, see \cite[Proposition 1.2]{ACL83}.
\medskip

We now show that \eqref{Almansi:f} follows from \eqref{A:d} and the binomial theorem
\begin{align}\label{binom}
\delta(x)^m=(1-|x|^2)^{m}=|x|^{2m}+\sum_{k=0}^{m-1}\binom{m}{k}(-1)^k|x|^{2k}.
\end{align}
We argue again by induction on $m$. For $m=0$, formula \eqref{Almansi:f} is true since $v(x)=\delta(x)^0\ch v(x)$. Assume \eqref{Almansi:f} holds for $m$ and let $v$ {as in \eqref{v12}}.
Then, by \eqref{A:d} and \eqref{binom}, there are unique $g_k\in C(\partial B),$ $k=0,\ldots,m$, such that
\begin{align*}
 v(x)&=|x|^{2m}\ch g_m(x)+ \sum_{k=0}^{m-1}|x|^{2k}\ch g_k(x)\\
& =\delta(x)^{m}\ch g_m(x)+ \sum_{k=0}^{m-1}|x|^{2k}(\ch g_k(x)-\binom{m}{k}(-1)^k\ch g_m(x)).
\end{align*}
Since $x\mapsto \sum_{k=0}^{m-1}|x|^{2k}\ch(g_k(x)-\binom{m}{k}(-1)^k g_m(x))$ is $m$-harmonic, by \cite[Proposition 1.2]{ACL83}, the induction hypothesis yields that there are unique $h_k\in C(\partial B)$ such that
\begin{align*}
 v(x)=\delta(x)^{m}\ch g_m(x)+ \sum_{k=0}^{m-1}\delta(x)^{k}\ch h_k(x),
\end{align*}
that is, \eqref{Almansi:f} holds for $m+1$, and the proof is finished.
\end{proof}

\begin{remark}
Almansi's formula holds in a more general setting, see \cite[Proposition 1.3]{ACL83}; in particular, it can be used to describe $m$-harmonic functions in star-shaped domains.
\end{remark}

\subsection{Integration by parts}

The next lemma is an integration by parts formula for functions which do not belong to $H^s(\R^N)$ (such as $\delta^{s-1}$ for $s>0$). Furthermore, it also states that for functions in $\cH_0^s(\Omega)$, the notions of weak and distributional solutions coincide.

\begin{lemma}\label{ibyp}
Let $\Omega\subset \R^N$ be an open bounded set with Lipschitz boundary, $\alpha,\sigma\in(0,1)$, $m\in\N_0$, $s=m+\sigma$, $u\in C^{2s+\alpha}(\Omega)\cap\cL^1_\sigma$. Then
\begin{align*}
\int_{\R^N} u\, (-\Delta)^s \varphi \ dx = \int_{\R^N} \varphi\,(-\Delta)^{m}(-\Delta)^\sigma u\ dx \qquad \text{for all $\varphi\in C^\infty_c(\Omega)$.}
\end{align*}
Moreover, if $u\in \cH_0^s(\Omega)$ then 
\begin{align}\label{lclaim}
\int_{\R^N} u\, (-\Delta)^s\varphi\ dx = \cE_s(u,\varphi)\qquad \text{ for all }\varphi\in C_c^\infty(\Omega).
\end{align}
\end{lemma}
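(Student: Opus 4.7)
I would split the argument along the two displayed identities.

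For the first identity, the key observation is that if $\varphi\in C_c^\infty(\Omega)$ then $\widetilde\varphi:=(-\Delta)^m\varphi$ still lies in $C_c^\infty(\Omega)$, because $(-\Delta)^m$ is a local operator. Since $\varphi\in\cS(\R^N)$, the operators $(-\Delta)^m$ and $(-\Delta)^\sigma$ commute on it, so $(-\Delta)^s\varphi=(-\Delta)^\sigma\widetilde\varphi$. I would then invoke the standard fractional integration by parts (cf.\ \cite{S07}): for $u\in\cL^1_\sigma$ which is $C^{2\sigma+\alpha}$ in a neighbourhood of $\supp\widetilde\varphi$,
$$\int_{\R^N}u\,(-\Delta)^\sigma\widetilde\varphi\,dx\;=\;\cE_\sigma(u,\widetilde\varphi)\;=\;\int_{\R^N}\widetilde\varphi\,(-\Delta)^\sigma u\,dx,$$
the first equality being a symmetrisation/Fubini argument on the kernel $|x-y|^{-N-2\sigma}$ (using $u\in\cL^1_\sigma$ to control the tail and the smoothness of $\widetilde\varphi$ to handle the diagonal), and the second an analogous Fubini swap. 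Finally, since $u\in C^{2s+\alpha}(\Omega)$, standard Schauder-type estimates for $(-\Delta)^\sigma$ yield $(-\Delta)^\sigma u\in C^{2m+\alpha}_{\mathrm{loc}}(\Omega)$, so iterating Green's identity $m$ times (with no boundary contributions, as $\supp\varphi\subset\subset\Omega$) converts $\int\widetilde\varphi\,(-\Delta)^\sigma u\,dx$ into $\int\varphi\,(-\Delta)^m(-\Delta)^\sigma u\,dx$.

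For \eqref{lclaim}, I would pass to the Fourier side. Since $\varphi\in C_c^\infty(\Omega)\subset\cS(\R^N)$ and $u\in\cH_0^s(\Omega)\subset L^2(\R^N)$, Plancherel's theorem together with $\widehat{(-\Delta)^s\varphi}(\xi)=|\xi|^{2s}\widehat\varphi(\xi)$ gives
$$\int_{\R^N}u\,(-\Delta)^s\varphi\,dx\;=\;\int_{\R^N}|\xi|^{2s}\,\widehat u(\xi)\,\overline{\widehat\varphi(\xi)}\,d\xi.$$
The same quantity arises from $\cE_s(u,\varphi)$ in \eqref{scalar:p} via the identity $\cE_\sigma(v,w)=\int_{\R^N}|\xi|^{2\sigma}\widehat v\,\overline{\widehat w}\,d\xi$ valid for $v,w\in H^\sigma(\R^N)$, combined with $\widehat{(-\Delta)^{m/2}v}(\xi)=|\xi|^m\widehat v(\xi)$ when $m$ is even, and with $\widehat{\partial_k(-\Delta)^{(m-1)/2}v}(\xi)=i\xi_k|\xi|^{m-1}\widehat v(\xi)$ plus $\sum_{k=1}^N\xi_k^2=|\xi|^2$ when $m$ is odd. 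Matching the two expressions yields \eqref{lclaim}.

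The main technical point is the Fubini swap in the first assertion: the principal value in \eqref{fractional} must be interchanged with the outer $dx$-integration. The standard way to justify this is to split into $\{|x-y|<1\}$, where a second-order Taylor expansion of the smooth $\widetilde\varphi$ annihilates the kernel singularity uniformly in $x$, and $\{|x-y|\geq 1\}$, where the tail is dominated by $\|u\|_{\cL^1_\sigma}$; one can then pass $\epsilon\to 0^+$ after Fubini and symmetrise in $(x,y)$. This is well documented in the fractional literature, but it is the only step that requires genuine care.
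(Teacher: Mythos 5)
Your proof is correct, and for the first identity it takes a genuinely more elementary route than the paper. You reduce to the single-order step $\int_{\R^N} u\,(-\Delta)^\sigma\widetilde\varphi\,dx = \int_{\R^N}\widetilde\varphi\,(-\Delta)^\sigma u\,dx$ (with $\widetilde\varphi=(-\Delta)^m\varphi\in C_c^\infty(\Omega)$) by symmetrizing the singular-integral kernel of $(-\Delta)^\sigma$ directly; the absolute convergence of the double integral $\cE_\sigma(u,\widetilde\varphi)$ follows because $\widetilde\varphi$ vanishes outside a compact subset of $\Omega$ where $u$ is $C^{2\sigma+\alpha}$, and the tail is controlled by $u\in\cL^1_\sigma$. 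You correctly identify the Fubini/principal-value interchange as the delicate point. The paper instead sidesteps this delicacy entirely: it chooses a smooth $U$ with $\supp\varphi\subset U\subset\subset\Omega$, writes both $\widetilde\varphi$ and $u$ through the $\sigma$-Green function $G_\sigma^U$ and $\sigma$-Poisson kernel $\Gamma_\sigma^U$ on $U$ (quoting \cite{nicola}), and then the swap becomes an ordinary Fubini against Lebesgue-integrable kernels, with no hypersingular integral to manipulate. Both proofs then invoke the same local Schauder fact $(-\Delta)^\sigma u\in C^{2m+\alpha}_{\mathrm{loc}}(\Omega)$ (the paper cites \cite{S07}) and finish the $(-\Delta)^m$ part by classical integration by parts, with no boundary contributions since $\supp\varphi\subset\subset\Omega$. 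Your route is shorter and more self-contained; the paper's route buys robustness by trading the principal-value argument for a black-box kernel representation on a smooth subdomain. For \eqref{lclaim} the paper simply refers to \cite[Lemma~B.4]{AJS16}, which encodes precisely the Fourier-side calculation you carry out (handling the even and odd cases of $m$ as you do), so there the two approaches coincide.
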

\begin{proof}
	Let $\varphi\in C^\infty_c(\Omega)$ be such that $\varphi\in C^\infty_c(U)$, where $U$ is a smooth subset of $\Omega$. By assumption, we have $v:=(-\Delta)^m\varphi\in  C^\infty_c({U})$, $(-\Delta)^s \varphi\ =\ (-\Delta)^\sigma v\in C^\infty(\R^N)\cap H^{2s}(\R^N)$ (see \cite[Remark 3.2]{AJS16}, \cite[Proposition~2.6]{S07}, \cite{AJS17b}), and $(-\Delta)^\sigma u\in C^{2m+\alpha}({U})$, because $u\in C^{2s+\alpha}({U})$ (see the proof of \cite[Propositions 2.5, 2.6, and 2.7]{S07}).
	Let $G_\sigma^{{U}}$ and $\Gamma_\sigma^{{U}}$ denote respectively the Green function and Poisson kernel associated to $(-\Delta)^\sigma$ in ${U}$ (for existence, see \emph{e.g.} \cite{B99,nicola}).  Then, by \cite[Proposition 2, equation (10)]{nicola}, $v(x)=\int_{U} G_\sigma^{{U}}(x,y)\,(-\Delta)^\sigma v(y)\;dy$ and
	\begin{align*}
	u(x)&=\int_{U} G_\sigma^{{U}}(x,y)\,(-\Delta)^\sigma u(y)\;dy+\int_{\R^N\setminus {U}}\Gamma_\sigma^{{U}}(x,y)\,u(y)\;dy\qquad  \text{for $x\in {U}$.}
	\end{align*}
	 Note that, by \cite[Proposition 2]{nicola}, we have $\Gamma_\sigma^{U}(x,y)=-(-\Delta)_y^\sigma G_\sigma^{U}(x,y)$ for $x\in {U}$ and $y\in\R^N\backslash \overline{{U}}$. These facts, Fubini's theorem, and a standard interchange of derivatives and integral (due to the dominated convergence theorem), imply that
	\begin{align*}
	&\int_{{U}}u(x)\,(-\Delta)^\sigma v(x)\;dx\ = \\
	&= \int_{{U}}\Bigg[\int_{{U}} G_\sigma^{{U}}(x,y)\,(-\Delta)^\sigma u(y)\;dy +\int_{\R^N\setminus {U}}\Gamma_\sigma^{{U}}(x,y)\,u(y)\;dy\Bigg](-\Delta)^\sigma v(x)\;dx\\
	&= \int_{{U}}\int_{{U}} G_\sigma^{{U}}(x,y)\,(-\Delta)^\sigma v(x)\;dx\ (-\Delta)^\sigma u(y)\;dy\ + \int_{\R^N\setminus {U}}u(y)\int_{{U}}\Gamma_\sigma^{{U}}(x,y)(-\Delta)^\sigma v(x)\;dxdy \\
	&= \int_{{U}} v(y)\,(-\Delta)^\sigma u(y)\;dy\ -\ \int_{\R^N\setminus {U}}u(y)\,(-\Delta)^\sigma\Bigg[\int_{{U}}G_\sigma^{{U}}(x,\cdot)(-\Delta)^\sigma v(x)\;dx\Bigg](y)\;dy \\
	&= \int_{{U}} v(y)\,(-\Delta)^\sigma u(y)\;dy -
	\int_{\R^N\setminus {U}}u(y)\,(-\Delta)^\sigma v(y)\;dy.
	\end{align*}
	Therefore,
	\begin{align*}
	\int_{\R^N}u\,(-\Delta)^s\varphi\;dx= \int_{\R^N} (-\Delta)^\sigma u\,(-\Delta)^m\varphi\;dx=\int_{\R^N} (-\Delta)^m(-\Delta)^\sigma u\,\varphi\;dx,
	\end{align*}
	by classical integration by parts.  Finally, \eqref{lclaim} follows using the Fourier transform, see \cite[Lemma B.4]{AJS16b,AJS16}, where only the ball is considered, but the same proof carries the case of open bounded Lipschitz sets.
\end{proof}

Using Lemma \ref{ibyp} we deduce the following important property of Martin kernels.

\begin{prop}\label{lem:sharm-intro}
Let $s>0$, $g\in C(\partial B)$, and $u:\R^N\to\R$ given by $u(x):=m_s\int_{\partial B}M_s(x,z)g(z)\ dz.$  If $s\in(0,1)$, then $u\in C^{\infty}(B)\cap \cL^1_\sigma$, and if $s>1$, then $u\in C^{\infty}(B)\cap C_0^{s-1}(\overline{B})\cap \cL^1_\sigma$. Furthermore, $u$ is $s$-harmonic in $B$, that is, $(-\Delta)^s u(x)=(-\Delta)^m(-\Delta)^\sigma u(x)=0$ for $x\in B$.
\end{prop}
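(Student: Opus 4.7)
The plan is to handle three assertions separately: the interior $C^\infty$ smoothness together with the $\cL^1_\sigma$ integrability, the boundary regularity $C^{s-1}_0(\overline B)$ when $s>1$, and the $s$-harmonicity, which is the main point. For the first, since $M_s(x,z)=(k_{N,s}/s)(1-|x|^2)^s_+\,|x-z|^{-N}$ and all its $x$-derivatives are bounded uniformly in $z\in\partial B$ on compact subsets of $B$ (where $|x-z|$ stays bounded below), differentiation under the integral yields $u\in C^\infty(B)$; the $\cL^1_\sigma$ condition is immediate because $u\equiv 0$ off $\overline B$ and has at worst a $\delta^{s-1}$ singularity at $\partial B$, integrable for every $s>0$. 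For the boundary regularity when $s>1$ I would write $u(x)=c_s(1-|x|^2)^s\,\Phi(x)$ with $\Phi(x):=\int_{\partial B}g(z)|x-z|^{-N}\,dz$: classical asymptotics for surface Riesz integrals show that $\Phi$ and its $k$-th derivatives behave like $(1-|x|)^{-1-k}$ near $\partial B$, and the prefactor $(1-|x|^2)^s$ compensates exactly up to order $s-1$, giving $u\in C^{s-1}_0(\overline B)$ with boundary profile $\sim(1-|x|)^{s-1}g(x/|x|)$.

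For the key claim of $s$-harmonicity I would argue by duality. Fix $\varphi\in C_c^\infty(B)$ with $\supp\varphi\subset B_{1-\epsilon}$. Since $u$ is smooth on a neighborhood of $\supp\varphi$, Lemma \ref{ibyp} reduces the pointwise statement $(-\Delta)^su\equiv 0$ in $B$ (note $(-\Delta)^su\in C(B)$ follows from $u\in C^\infty(B)\cap\cL^1_\sigma$) to the distributional identity $\int_{\R^N}u(-\Delta)^s\varphi\,dx=0$. Fubini, justified by $\int_B\int_{\partial B}M_s(x,z)\,dz\,dx<\infty$ against the bounded $(-\Delta)^s\varphi|_B$, recasts this as $m_s\int_{\partial B}g(z)\int_B M_s(x,z)(-\Delta)^s\varphi(x)\,dx\,dz$. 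The inner integral I would then evaluate via the defining limit $M_s(x,z)=\lim_{B\ni y\to z}G_s(x,y)(1-|y|^2)^{-s}$ and the Green reproducing formula $\int_B G_s(x,y)(-\Delta)^s\varphi(x)\,dx=\varphi(y)$ for $y\in B$, valid because $\varphi\in\cH_0^s(B)$ is itself the unique weak solution of $(-\Delta)^s w=(-\Delta)^s\varphi$ in $B$ with $w=0$ on $\R^N\setminus B$. After exchanging limit and integral, the inner integral becomes $\lim_{y\to z}\varphi(y)(1-|y|^2)^{-s}=0$, since $\varphi$ vanishes near $\partial B$, so the distributional identity is established and hence the pointwise one.

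The main obstacle I expect is the rigorous justification of this limit-integral exchange. Boggio's formula provides $G_s(x,y)\le C\min\bigl(|x-y|^{2s-N},\,\delta(x)^s\delta(y)^s|x-y|^{-N}\bigr)$, giving the natural majorant $C\delta(x)^s|x-y|^{-N}$ which depends on $y$. I plan to overcome this by splitting $B$ into $\{|x-z|\ge\eta\}$ and $\{|x-z|<\eta\}$: on the first piece $|x-y|$ is bounded below uniformly once $|y-z|<\eta/2$, so dominated convergence applies routinely; on the second piece, polar coordinates centered at $z$ together with $\delta(x)\le 2|x-z|$ supply the uniform integrable majorant $C|x-z|^{s-1}$, finite for every $s>0$. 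These estimates are standard in the Green-function literature but constitute the technical heart of the argument; once they are in place the proof concludes as above.
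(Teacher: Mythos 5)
Your route is genuinely different from the paper's. The authors simply cite \cite[Proposition~1.5 and proof of Lemma~6.12]{AJS16} for the distributional $s$-harmonicity and then upgrade to a pointwise statement via Lemma~\ref{ibyp} and the fundamental lemma of the calculus of variations; for the boundary regularity they note $\delta^{1-s}u=\cH g\in C(\overline{B})$, which is the same observation your Riesz-potential asymptotics encode. You instead set out to \emph{reprove} the distributional identity from scratch via Fubini, the reproducing formula $\int_B G_s(x,y)(-\Delta)^s\varphi(x)\,dx=\varphi(y)$, and the defining limit in \eqref{martin}. This is sound in outline and in effect re-derives \eqref{p:5} of Lemma~\ref{prop:de} for $f=(-\Delta)^s\varphi|_B$ (with $k=m+1$ there), which you could have invoked directly to get $\int_B M_s(x,z)(-\Delta)^s\varphi(x)\,dx=\dn^s\varphi(z)=0$. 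But the execution of your dominated-convergence step has a gap.

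The proposed majorant $C|x-z|^{s-1}$ on $\{|x-z|<\eta\}$, extracted from $G_s(x,y)\delta(y)^{-s}\le \tfrac{k_{N,s}}{s}\,\delta(x)^s|x-y|^{-N}$ and $\delta(x)\le 2|x-z|$, implicitly assumes $|x-y|\gtrsim|x-z|$ uniformly in $y$ near $z$, and this is false: for $x,y$ both close to $z$ the ratio $|x-z|/|x-y|$ is unbounded. Indeed, for each fixed $y\in B$ the function $x\mapsto G_s(x,y)\delta(y)^{-s}$ has its singularity at $x=y$ (behaving like $\delta(y)^{-s}|x-y|^{2s-N}$ when $2s<N$), and since $\delta(y)^{-s}\to\infty$ as $y\to z$ there is no $y$-independent $L^1$ majorant on any full neighbourhood of $z$, so dominated convergence does not apply as stated. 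The exchange is nevertheless valid, but the rescue is a direct smallness estimate rather than a majorant: split $\{|x-z|<\eta\}$ once more into $\{|x-z|\ge 2|y-z|\}$, where $|x-y|\ge|x-z|/2$ does make your bound $C|x-z|^{s-N}$ legitimate, and $\{|x-z|<2|y-z|\}\subset B_{3|y-z|}(y)$, where the near-diagonal estimate for $G_s$ together with $\delta(y)\sim 2|y-z|$ (radial approach) gives $\int_{\{|x-z|<2|y-z|\}}G_s(x,y)\delta(y)^{-s}\,dx\lesssim |y-z|^{s}\to 0$. With this extra cut, and then letting $\eta\to 0$, the limit-integral exchange follows and your argument gives a self-contained alternative to the citation the paper uses.
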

\begin{proof}
Let $u$ be as in the statement and $s>0$. By \cite[Proposition 1.5 and proof of Lemma 6.12]{AJS16}, $u\in C^{\infty}(B)\cap \cL^1_\sigma$ is $s$-harmonic in the sense of distributions, that is, 
$\int_{\R^N} u(x)(-\Delta)^s\phi(x)\ dx=0$ for all $\phi\in C^{\infty}_c(B).$  Then, by Lemma \ref{ibyp} and the fundamental lemma of calculus of variations, $u$ is $s$-harmonic also pointwisely in $B$. Finally, if $s>1$, we have that $u\in C_0^{s-1}(\overline{B})$, since ${\delta}^{1-s}u=\ch g\in C(\overline{B})$ (see \cite[Theorem 2.6]{GT}) and $M_s(x,z)=0$ for $x\in\R^{N}\backslash\overline{B}$.
\end{proof}

\subsection{Regularity}\label{reg:sec}

We begin with an elementary characterization of some functions in $\cH_0^s(B)$.
\begin{lemma}\label{l:ws}
 Let $m\in\N_0$, $\sigma\in(0,1)$, $s=m+\sigma$, and $\phi\in C^{m+1{,0}}(\overline{B})$, then ${\delta^s\varphi\in}\cH_0^s(B)$.
\end{lemma}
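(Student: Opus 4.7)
The plan is to show that $u := \delta^s \varphi$, extended by zero outside $B$, lies in $H^s(\R^N)$; since $u=0$ on $\R^N\setminus B$ is built into the definition of $\delta$, this will yield $u\in\cH_0^s(B)$. I would use the characterization that, for $s=m+\sigma$ with $m\in\N_0$ and $\sigma\in(0,1)$, a function belongs to $H^s(\R^N)$ iff it lies in $H^m(\R^N)$ and each of its $m$-th order distributional derivatives belongs to $H^\sigma(\R^N)$, and verify the two conditions separately.

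For the integer part, Leibniz's rule gives, for any multi-index $|\alpha|\le m$,
\[
\partial^\alpha(\delta^s\varphi) \;=\; \sum_{\beta \le \alpha}\binom{\alpha}{\beta}\, P_\beta(x)\, \delta^{s-|\beta|}(x)\, \partial^{\alpha-\beta}\varphi(x),
\]
where each $P_\beta$ is a polynomial arising from differentiating $\delta^s=(1-|x|^2)_+^s$. Since $s-|\beta|\ge\sigma>0$ for $|\beta|\le m$, every such term is bounded on $\overline B$ and, as $\delta^{s-|\beta|}$ vanishes on $\partial B$, extends continuously by zero to $\R^N$. Consequently $u\in W^{m,\infty}(\R^N)$ has compact support in $\overline B$, and in particular $u\in H^m(\R^N)$.

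For the fractional part, fix $|\alpha|=m$. Reorganizing the Leibniz expansion by the power of $\delta$ that appears, one gets
\[
\partial^\alpha u \;=\; \sum_{k=0}^{m} \delta^{k+\sigma}\, \psi_k,
\]
with each $\psi_k$ bounded and Lipschitz continuous on $\overline B$; the hypothesis $\varphi\in C^{m+1,0}(\overline B)$ is used precisely to secure one extra derivative on the $\psi_k$'s (since the Leibniz coefficients involve $\partial^{\alpha-\beta}\varphi$ with $|\alpha-\beta|\le m$). For $k\ge 1$, the piece $\delta^{k+\sigma}\psi_k$ is Lipschitz continuous on $\R^N$ with compact support and hence lies in $H^1(\R^N)\subset H^\sigma(\R^N)$. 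The delicate term is $\delta^\sigma\psi_0$, and its membership in $H^\sigma(\R^N)$ follows from the fact that $\delta^\sigma$ (extended by zero) lies in $H^\sigma(\R^N)$ together with the stability of $H^\sigma$ under multiplication by a bounded Lipschitz function.

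\textbf{Main obstacle.} The non-trivial ingredient is verifying that $\delta^\sigma\in H^\sigma(\R^N)$: a plain $\sigma$-H\"older bound $|\delta^\sigma(x)-\delta^\sigma(y)|\le C|x-y|^\sigma$ alone yields a logarithmically divergent Gagliardo integrand, so one must exploit that $\delta^\sigma$ is smooth inside $B$ and degenerates only in the normal direction at $\partial B$, e.g.\ by splitting the Gagliardo integral $\iint|x-y|^{-N-2\sigma}|\delta^\sigma(x)-\delta^\sigma(y)|^2\,dx\,dy$ into a near-boundary piece and an interior piece and treating the former by a one-dimensional estimate along the normal. Equivalently, one may appeal to the classical fact that $\delta^\sigma$ is, up to a constant, the Green potential of $1$ for $(-\Delta)^\sigma$ on $B$, which by its variational characterization belongs to $\cH_0^\sigma(B)\subset H^\sigma(\R^N)$; once this is granted, the rest of the argument reduces to routine multiplicative estimates.
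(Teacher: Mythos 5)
Your proof is correct but takes a genuinely different route from the paper's. The paper first reduces the claim to showing $u:=\delta^s\varphi\in H^s(B)$ with the intrinsic double-integral norm on $B$, appealing to \cite[Section~4.3.2]{T78} (and \cite{AJS16}) for the identification of $\cH_0^s(B)$ with extensions-by-zero of $H^s(B)$ functions; it then expands $\partial^\alpha u$ by Leibniz and controls each Gagliardo integrand through the crude product splitting $|ab-a'b'|^2\le K\bigl(|a-a'|^2+|b-b'|^2\bigr)$, which reduces everything to two separate memberships, $\delta^s\in H^s(B)$ (cited from \cite{AJS16,DG2016}) and $\varphi\in C^{m+1,0}(\overline B)\subset H^{m+1}(B)\subset H^s(B)$. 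You instead work directly in $H^s(\R^N)$ via the characterization $H^{m+\sigma}(\R^N)=\{u\in H^m(\R^N):\partial^\alpha u\in H^\sigma(\R^N)\text{ for }|\alpha|=m\}$, regroup the Leibniz expansion by the power of $\delta$ that appears (which, incidentally, is the cleaner way to write what your first display only indicates schematically, since $\partial^\beta(\delta^s)$ is a sum of terms with several powers of $\delta$), dispose of the pieces $\delta^{k+\sigma}\psi_k$ with $k\ge1$ as compactly supported Lipschitz functions ($\subset H^1\subset H^\sigma$), and isolate the fractional content in the single fact $\delta^\sigma\in H^\sigma(\R^N)$ combined with the stability of $H^\sigma(\R^N)$ under multiplication by bounded Lipschitz functions. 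Your route is somewhat more self-contained: it bypasses the intrinsic-versus-whole-space Sobolev identification and confines the nontrivial input to the order-$\sigma$ case (with the pleasant torsion-function interpretation of $\delta^\sigma$ you mention), whereas the paper's argument is shorter given its references and treats the two factors $\partial^\gamma\delta^s$ and $\partial^\beta\varphi$ symmetrically. Ultimately both hinge on the same borderline membership $\delta^t\in H^t$ (for $t=s$ in the paper, $t=\sigma$ in yours), so the essential difficulty is the same.
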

\begin{proof}
Let $s>0$ and $\varphi\in C^{m+1{,0}}(\overline{B})$. By  \cite[Section 4.3.2, equation (7)]{T78} (see also \cite[Remark 6.8]{AJS16}), it suffices to show that $u:={\delta^s}\varphi\in H^s(B)$, \emph{i.e.},
\begin{align*}
\|u\|_{H^s(B)}=\sum_{|\alpha|\leq m}\|\partial^\alpha u\|_{L^2(B)}+
\sum_{|\alpha|=m} \int_B\int_B\frac{|\partial^\alpha u(x)-\partial^\alpha u(y)|^2}{|x-y|^{N+2\sigma}}\ dydx<\infty.
\end{align*}
Observe that, by Leibniz rule, for every multi-index $\alpha\in\N^N$ and $k\in\{0,\ldots,m\}$ such that $|\alpha|=k$, the function $\partial^\alpha u$ is a sum of terms of the form 
$k_{\gamma,\beta} \partial^\gamma {\delta^s}\, \partial^\beta \phi$ for some multi-indices $\beta,\gamma\in\N^N$ such that $|\gamma|+|\beta|=k$ and some constant $k_{\gamma,\beta}\in \R$. Moreover, note that 
$(\sum_{i=1}^M a_i)^2 \leq 2^{M-1} \sum_{i=1}^M a_i^2$ for any $M\in\N$ and $\{a_i\}_{i=1}^M\subset\R$. Therefore, $u\in W^{m,\infty}(B)$ and it suffices to show that
\begin{align*}
\int_B\int_B\frac{|\partial^\gamma {\delta^s}(x)\, \partial^\beta \phi(x)-\partial^\gamma {\delta^s}(y)\, \partial^\beta \phi (y)|^2}{|x-y|^{N+2\sigma}}\ dydx<\infty,\qquad \text{ for all }\beta,\gamma\in\N^N, \ |\beta|+|\gamma|=m.
\end{align*}
However, this follows from the fact that $\varphi\in C^{m+1{,0}}(\overline{B})\subset H^{m+1}(B)\subset H^s(B)$, ${\delta^s}\in H^s(B)$ (see \cite[Remark 6.10]{AJS16} or \cite[Corollary 9]{DG2016}), and
\begin{align*}
|\partial^\gamma {\delta^s}(x)\, \partial^\beta \phi(x)-\partial^\gamma {\delta^s}(y)\, \partial^\beta \phi (y)|{ ^2}
\leq K(|\partial^\gamma {\delta^s}(x)-\partial^\gamma {\delta^s}(y)|^2+|\partial^\beta \phi(x)-\partial^\beta \phi (y)|^2 )
\end{align*}
where $K:=2\max\{\|\partial^\beta \phi\|_{L^\infty(B)}^2,\|\partial^\gamma {\delta^s}\|_{L^\infty(B)}^2\}$ for all $\beta,\gamma\in\N^N$ with $|\beta|+|\gamma|=m$, and the claim follows.
\end{proof}

\begin{lemma}\label{u:2:l}
Let $m\in\mathbb N_0$, $\sigma,\alpha\in(0,1)$, $s=m+\sigma$, $u:\R^N\to \R$ such that $\delta^{1-\sigma}u\in C^{m+\alpha}(\overline{B})$, $\dn^{k+\sigma-1}u=0$ at $\partial B$ for $k=0,\ldots,m$, $u=0$ on $\R^N\backslash\overline{B}$, and $(-\Delta)^su\in C^{1-\sigma+\alpha}(\overline{B})$. Then $u\in \cH_0^s(B)$ and, if $(-\Delta)^s u=0$ in $B$, then $u\equiv 0$ in $\R^N$.
\end{lemma}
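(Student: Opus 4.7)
The plan is to first use the prescribed trace conditions $\dn^{k+\sigma-1}u=0$ together with the H\"older regularity of $\delta^{1-\sigma}u$ to extract a quantitative boundary decay of $u$, then to place $u$ in $\cH_0^s(B)$, and finally to deduce the uniqueness claim from the integration-by-parts identity of Lemma~\ref{ibyp} together with the coercivity of $\cE_s$.

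\textbf{Step 1 (Boundary decay of $u$).} Setting $v:=\delta^{1-\sigma}u\in C^{m+\alpha}(\overline{B})$, definition \eqref{dn-vs-partial} turns the trace hypotheses into $\frac{\partial^k}{\partial(|x|^2)^k}v\big|_{\partial B}=0$ for $k=0,\ldots,m$. Iterating the Taylor argument of Lemma~\ref{prop:de} in the radial variable and exploiting the $C^{m+\alpha}$-regularity of $v$ up to the boundary, one obtains $v(x)=O(\delta(x)^{m+\alpha})$ as $x\to\partial B$, and therefore $|u(x)|\lesssim \delta(x)^{s-1+\alpha}$ near $\partial B$. Meanwhile, the hypothesis $(-\Delta)^s u\in C^{1-\sigma+\alpha}(\overline{B})$ together with standard interior elliptic estimates for the fractional Laplacian upgrades $u$ to be smooth in $B$, which is convenient when estimating interior Sobolev seminorms.

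\textbf{Step 2 (Membership $u\in\cH_0^s(B)$).} Here I would reduce to Lemma~\ref{l:ws}. Since $v$ vanishes at $\partial B$ together with its first $m$ derivatives in $|x|^2$ and $v\in C^{m+\alpha}(\overline{B})$, one can decompose $v=\delta^{m+1}\phi + r$, where $\phi\in C^{m+1,0}(\overline{B})$ is produced by a radial regularization of $\delta^{-m-1}v$ and $r$ carries extra vanishing at $\partial B$. Writing then
\begin{equation*}
u\ =\ \delta^{\sigma-1}v\ =\ \delta^{s}\phi\ +\ \delta^{\sigma-1}r,
\end{equation*}
Lemma~\ref{l:ws} places $\delta^s\phi\in\cH_0^s(B)$, while the remainder $\delta^{\sigma-1}r$ is handled by a cutoff-approximation with $\eta_n\in C_c^\infty(B)$ equal to $1$ on $\{\delta\ge 2/n\}$, the key point being that the boundary decay from Step~1 dominates the loss produced by differentiating $\eta_n$, so that $\eta_n(\delta^{\sigma-1}r)\to \delta^{\sigma-1}r$ in the $\cH_0^s(B)$-norm.

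\textbf{Step 3 (Uniqueness).} Suppose now that $(-\Delta)^s u=0$ in $B$. Note that $u\in\cL_\sigma^1$ since $u$ is bounded by $\delta^{\sigma-1}\|v\|_{L^\infty(\overline{B})}$ on $B$ and vanishes outside. For any $\varphi\in C_c^\infty(B)$, Lemma~\ref{ibyp} gives
\begin{equation*}
 \cE_s(u,\varphi)\ =\ \int_{\R^N}u\,(-\Delta)^s\varphi\,dx\ =\ \int_{\R^N}\varphi\,(-\Delta)^s u\,dx\ =\ 0.
\end{equation*}
By density of $C_c^\infty(B)$ in $\cH_0^s(B)$ and continuity of $\cE_s$ on this space, the identity extends to $\cE_s(u,\psi)=0$ for every $\psi\in\cH_0^s(B)$. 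Choosing $\psi=u$ (legitimate thanks to Step~2) yields $\cE_s(u,u)=0$, and since $\cE_s$ is equivalent to the $\cH_0^s(B)$-seminorm, $u\equiv 0$ in $\R^N$.

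The main obstacle is Step~2: converting the pointwise boundary decay $|u|\lesssim \delta^{s-1+\alpha}$ and the interior smoothness of $u$ into a genuine Sobolev membership $u\in H^s(\R^N)$ is borderline in the tangential regularity of $v$, and the decomposition above must be arranged so that the leading $\delta^s$-piece falls under the scope of Lemma~\ref{l:ws}; the residual term is then controlled by a standard cutoff scheme exploiting the extra boundary vanishing.
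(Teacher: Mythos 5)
There is a genuine gap in Step~2, and it traces back to a misallocation of the hypothesis $(-\Delta)^s u\in C^{1-\sigma+\alpha}(\overline{B})$. You use this assumption only to get interior smoothness of $u$, but interior smoothness is cheap and does not help here. The real issue is boundary regularity: from $\delta^{1-\sigma}u\in C^{m+\alpha}(\overline B)$ and the vanishing traces, Lemma~\ref{prop:de} gives you $\delta^{1-s}u\in C^{\alpha}(\overline B)$, hence $|u|\lesssim\delta^{s-1+\alpha}$, but that bound is \emph{not} strong enough to put $u$ in $H^s(\R^N)$ when $\alpha$ is small (the radial profile $\delta^{s-1+\alpha}$ fails to lie in $H^s$ for $\alpha$ near $0$, since the correct threshold is comparison with $\delta^{s-1/2}$). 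The paper fills precisely this gap by invoking the boundary regularity theorem of Grubb \cite[Theorem~4, eq.~(6)]{G15:2}: under the stated assumption on $(-\Delta)^s u$ and $u\in\dot C^{s-1+\alpha}(\overline B)$, one obtains $\delta^{-s}u\in C^{m+1+\alpha}(\overline B)$, after which $u=\delta^s(\delta^{-s}u)$ and Lemma~\ref{l:ws} applies directly. Your decomposition $v=\delta^{m+1}\phi+r$ with $\phi\in C^{m+1,0}(\overline B)$ and $r$ carrying extra boundary vanishing cannot be carried out from the hypotheses alone: $\delta^{-m-1}v$ blows up like $\delta^{\alpha-1}$ as $x\to\partial B$, so no ``radial regularization'' can produce a $C^{m+1,0}(\overline B)$ factor while leaving a remainder with better vanishing. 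What is missing is exactly the elliptic regularity step up to the boundary, not a cutoff argument. Steps~1 and~3 are fine and essentially match the paper (Step~3 is the content of uniqueness of weak solutions in $\cH_0^s(B)$, \cite[Corollary~3.6]{AJS16a}), but without the Grubb input, Step~2 as written does not close.
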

\begin{proof}
Since $\dn^{k+\sigma-1}u=0$ for $k=0,\ldots,m$ and $\delta^{1-\sigma}u\in C^{m+\alpha}(\overline{B})$, we have that 
\begin{align*}
 0=\dn^{s-1}u(z)=
  \lim_{x\to z}\delta^{1-s}(x)u(x)\qquad \text{ for }\ z\in\partial B
\end{align*}
by Lemma \ref{prop:de}; therefore, $\delta^{1-s}u\in {C^{\alpha}(\overline{B})}$ and $u\in C^{{s-1}{+\alpha}}(\R^N)$ (recall that $u= 0$ on $\R^N\backslash\overline{B}$).  Now, assume that ${(-\Delta)^su}\in C^{1{-\sigma}{+\alpha}}(\overline{B})$, then \cite[Theorem 4, equation (6)]{G15:2} (using that $u\in \dot{C}^{s-1{+\alpha}}(\overline{B})=\{v\in C^{s-1+\alpha}(\R^N)\;:\; \supp\ v\subset \overline{B}\}$) yields that $\delta^{-s}u\in C^{{m+1}{+\alpha}}(\overline{B})$, and the claim $u\in \cH^s_0(B)$ follows by Lemma \ref{l:ws}. Finally, if $(-\Delta)^su=0$ in $B$, then $u\equiv 0$ in $\R^N$, by the uniqueness of weak solutions in $\cH_0^s(B)$ \cite[Corollary 3.6]{AJS16a}.
\end{proof}

Recall that $\|v \|_{C^{m+\sigma}(B)}$ is given in \eqref{Hn}.

\begin{lemma}\label{dnG:l}
Let $m\in \N_0$, $\sigma\in(0,1)$, $r>1$, $u\in L^{\infty}(B_{r}\backslash\overline{B})\cap \cL^1_{\sigma}$ such that 
\begin{align*}
\|(-\Delta)^\sigma u \|_{C^{m-\sigma-1+\beta}(B)}<\infty\qquad \text{ with }\quad m-\sigma-1+\beta>0\quad \text{  for some }\beta>0, 
\end{align*}
and let $w:=u-\Gamma_\sigma u$. Then $\delta^{1-\sigma} w\in C^{m+\alpha}(\overline{B})$ for any $0<\alpha<\beta$ and 
\begin{align*}
\dn^{\sigma} w(z)=\gamma_{N,\sigma}\lim_{x\to z}\int_{\R^N\backslash\overline{B}}\frac{u(x)-u(y)}{(|y|^2-1)^{\sigma}|x-y|^N}\ dy\qquad \text{ for }z\in \partial B.
\end{align*}
\end{lemma}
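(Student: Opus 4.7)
My plan is to first derive an explicit representation of $w$ that exposes its structure, and then to obtain the regularity via Grubb's boundary theorem and to compute the trace by direct differentiation.

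\emph{Representation of $w$.} Since $\Gamma_\sigma u$ is by Theorem~\ref{poisson:thm} the $\sigma$-harmonic extension into $B$ of the boundary data $u|_{\R^N\setminus\overline{B}}$, applying the same theorem to the constant function $\mathbf{1}$ yields the identity
\[
\gamma_{N,\sigma}\,\delta^\sigma(x)\int_{\R^N\setminus\overline{B}}\frac{dy}{(|y|^2-1)^\sigma|x-y|^N}=1\qquad\text{for all $x\in B$.}
\]
Multiplying by $u(x)$ and subtracting from the explicit formula $\Gamma_\sigma u(x)=\gamma_{N,\sigma}\delta^\sigma(x)\int(|y|^2-1)^{-\sigma}|x-y|^{-N}u(y)\,dy$ gives
\[
w(x)=u(x)-\Gamma_\sigma u(x)=\delta^\sigma(x)\,R(x),\qquad R(x):=\gamma_{N,\sigma}\int_{\R^N\setminus\overline{B}}\frac{u(x)-u(y)}{(|y|^2-1)^\sigma|x-y|^N}\,dy.
\]
The assumption $u\in L^\infty(B_r\setminus\overline{B})\cap\cL^1_{\sigma}$ guarantees absolute convergence of both integrals (the singularities at $|y|=1^+$ and at infinity are integrable). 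In particular, $\delta^{1-\sigma}(x)w(x)=\delta(x)R(x)$.

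\emph{Regularity via Grubb.} Theorem~\ref{poisson:thm} shows that $\Gamma_\sigma u$ is $\sigma$-harmonic in $B$ and equals $u$ on $\R^N\setminus\overline{B}$, so $w\equiv 0$ on $\R^N\setminus\overline{B}$ and
\[
(-\Delta)^\sigma w=(-\Delta)^\sigma u\in C^{m-\sigma-1+\beta}(\overline{B})\qquad\text{in $B$.}
\]
I would then invoke Grubb's boundary regularity theorem \cite[Theorem 4, equation (6)]{G15:2}, applied in the same spirit as in Lemma~\ref{u:2:l} above, to conclude $w/\delta^\sigma\in C^{m+\alpha}(\overline{B})$ for every $0<\alpha<\beta$ (reducing $\alpha$ slightly if needed to avoid the exceptional non-integer thresholds of the transmission theory). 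Multiplying by the smooth weight $\delta\in C^\infty(\overline{B})$ yields $\delta^{1-\sigma}w=\delta\cdot(w/\delta^\sigma)\in C^{m+\alpha}(\overline{B})$, which is the first claim. Equivalently, $R\in C^{m+\alpha}(\overline{B})$.

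\emph{Trace formula.} From $\delta^{1-\sigma}w=\delta R$ with $R\in C^{m+\alpha}(\overline{B})$, Leibniz's rule with respect to $|x|^2$ gives
\[
\frac{\partial}{\partial|x|^2}\bigl[\delta(x)R(x)\bigr]=-R(x)+\delta(x)\frac{\partial R}{\partial|x|^2}(x).
\]
Taking the normal limit $x\to z\in\partial B$, the second summand vanishes because $\delta(z)=0$ and $\partial R/\partial|x|^2$ is bounded near $\partial B$ (using $m+\alpha\geq 1$, which is ensured by $m-\sigma-1+\beta>0$). Comparing with the definition \eqref{dn-vs-partial} of $\dn^{\sigma}$ (the case $k=1$) yields $\dn^\sigma w(z)=\lim_{x\to z}R(x)$, which is exactly the asserted formula. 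I expect the delicate step to be the precise application of Grubb's regularity theorem to attain the target H\"older exponent $m+\alpha$ rather than a weaker one: the interplay between the input class $C^{m-\sigma-1+\beta}$ of $(-\Delta)^\sigma w$, the output class $C^{m+\alpha}(\overline{B})$ of $w/\delta^\sigma$, and the excluded non-integer thresholds must be handled with care, possibly bootstrapping through the interior regularity $u\in C^{m+\sigma-1+\beta}_{\mathrm{loc}}(B)$ that follows from $(-\Delta)^\sigma u\in C^{m-\sigma-1+\beta}(\overline{B})$. Once this is in place, the remaining ingredients are routine.
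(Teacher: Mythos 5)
Your decomposition $w=\delta^\sigma R$ is a clean and correct computational observation, and the final trace formula agrees with the paper's. Two issues, however, one minor and one substantive. The minor one: Theorem~\ref{poisson:thm} is not directly applicable to the constant function $\mathbf 1$, since it requires $\psi=0$ in $B_r$ for some $r>1$; the identity $\cH_\sigma\mathbf 1\equiv 1$ in $B$ is true but is a classical fact about the Riesz--Poisson kernel for the ball (e.g.\ \cite{L72,B16}), and should be cited as such rather than deduced from Theorem~\ref{poisson:thm}.

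The substantive gap is in the regularity step. You invoke \cite[Theorem~4, eq.~(6)]{G15:2} to conclude $w/\delta^\sigma\in C^{m+\alpha}(\overline{B})$, but that theorem, as used in Lemma~\ref{u:2:l}, gives a gain of exactly $\sigma$ in the $\delta^{-\sigma}$-weighted H\"older scale: from $(-\Delta)^\sigma w\in C^{m-\sigma-1+\beta}(\overline{B})$ one obtains only $\delta^{-\sigma}w\in C^{m-1+\beta}(\overline{B})$, which is one full order short of your claim. Your conclusion $\delta^{1-\sigma}w=\delta\cdot(w/\delta^\sigma)\in C^{m+\alpha}(\overline{B})$ would then also land in $C^{m-1+\beta}$, not $C^{m+\alpha}$, since multiplying by the smooth factor $\delta$ does not raise the H\"older class. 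The paper instead first uses \cite[Lemma 2.5]{FW16} and \cite[Theorem 1.1]{RS12} to get $w\in C^\sigma(\R^N)$, hence $\dn^{\sigma-1}w=0$ at $\partial B$, and then invokes \cite[eq.~(7.17)]{G15:2} (the estimate for the nonhomogeneous ``$\delta^{\sigma-1}$-weighted'' Dirichlet problem with boundary data $\varphi=\dn^{\sigma-1}w=0$), which yields precisely $\delta^{1-\sigma}w\in C^{m+\alpha}(\overline{B})$, a gain of $\sigma+1$ in that weighted scale. Note that this statement is genuinely weaker than $w/\delta^\sigma\in C^{m+\alpha}$: dividing $\delta^{1-\sigma}w\in C^{m+\alpha}(\overline{B})$ (which vanishes on $\partial B$) by $\delta$ costs a full order, giving only $R\in C^{m-1+\alpha}(\overline{B})$. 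This also means your Leibniz computation of the trace, which requires $\partial R/\partial|x|^2$ bounded near $\partial B$, is not secured for small $m$; the paper's route via Lemma~\ref{prop:de}, which computes $\dn^\sigma w(z)=\lim_{x\to z}\delta(x)^{-\sigma}w(x)=\lim_{x\to z}R(x)$ directly from $\dn^{\sigma-1}w=0$ and the established regularity of $\delta^{1-\sigma}w$, avoids that extra differentiability requirement.
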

\begin{proof}
Since $u\in L^{\infty}(B_{r}\backslash\overline{B})\cap \cL^1_{\sigma}$, we have that $\cH_\sigma u\in C^\infty(B)\cap L^\infty(B)$ and $w=0$ on $\R^N\backslash\overline{B}$, by \cite[Lemma 2.5]{FW16}. Moreover, since $(-\Delta)^\sigma w=(-\Delta)^\sigma u$ in $B$, we have that $(-\Delta)^\sigma w\in L^\infty (B)$ and thus $w\in C^\sigma(\R^N)$, by \cite[Theorem 1.1]{RS12}.  Therefore, $\dn^{\sigma-1} w=0$ at $\partial B$ and, by \cite[equation (7.17)]{G15:2} (using $a=\sigma$ and $\varphi=\dn^{\sigma-1} w=0$), it follows that $\delta^{1-\sigma} w\in C^{m+\alpha}(\overline{B})$ for some  $0<\alpha<\beta$.  In particular, since $\dn^{\sigma-1} w =0$ on $\partial B$, 
 \begin{align*}
 \dn^{\sigma}w(z)=\lim_{x\to z} \delta^{-\sigma}(x)(u(x)-\cH_\sigma u(x))=\gamma_{N,\sigma}\lim_{x\to z}\int_{\R^N\backslash {B}}\frac{u(x)-u(y)}{(|y|^2-1)^{\sigma}|x-y|^N}\ dy\qquad \text{ for }z\in\partial B,
 \end{align*}
 by Lemma \ref{prop:de}.
\end{proof}

\section{The Poisson kernel}\label{sectionproof1}

In this section we show our main result concerning the Poisson kernel $\Gamma_s$ which is based on an induction argument.  We begin with a recurrence formula for $\Gamma_s$. Recall the constants $m_s$, $\gamma_{N,\sigma}$, and $k_{N,s}$ given in \eqref{c}.

\begin{prop}\label{Gamma:it} Let $m\in\N$, $\sigma\in(0,1)$, $s=m+\sigma$, and $\Gamma_s$ as in \eqref{PK}. Then 
\begin{align*}
\Gamma_{s}(x,y)=\Gamma_{s-1}(x,y)- m_s\int_{\partial B}M_s(x,z)\,\dn^{s-1}(\Gamma_{s-1}(\cdot,y))(z) \;dz, \qquad x\in B,\ y\in\R^N\setminus\overline{B},
\end{align*}
where 
\begin{align*}
\dn^{s-1}(\Gamma_{s-1}(\cdot,y))(z)=\frac{(-1)^{m-1}\gamma_{N,\sigma}}{(|y|^2-1)^{s-1}|y-z|^{N}},\qquad z\in\partial B,\ y\in \R^N\setminus\overline{B}.
\end{align*}
\end{prop}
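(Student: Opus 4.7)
The strategy is a direct three-step computation. Because Proposition~\ref{Gamma:it} is the workhorse for the induction in Theorem~\ref{poisson:thm} establishing the $s$-harmonicity of $\Gamma_s(\cdot,y)$, a uniqueness-type argument that matches $\dn^{k+\sigma-1}$-traces of two $s$-harmonic functions would be circular here; I therefore stay at the level of explicit formulae throughout.

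The first step is to compute $\dn^{s-1}(\Gamma_{s-1}(\cdot,y))(z)$. Writing $s-1=(m-1)+\sigma$, the operator $\dn^{s-1}$ is \eqref{dn-vs-partial} with $k=m$, and multiplying \eqref{PK} by $(1-|x|^2)^{1-\sigma}$ gives
$$(1-|x|^2)^{1-\sigma}\Gamma_{s-1}(x,y)=\frac{(-1)^{m-1}\gamma_{N,\sigma}}{(|y|^2-1)^{s-1}}\cdot\frac{(1-|x|^2)^m}{|x-y|^N}.$$
Setting $\rho=|x|^2$ and expanding $(\partial/\partial\rho)^m$ by Leibniz, every term with $j<m$ derivatives on $(1-\rho)^m$ vanishes at $\rho=1$, since $(1-\rho)^m$ has a zero of order $m$. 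Only $j=m$ survives and contributes $(-1)^m m!\cdot|z-y|^{-N}$; the prefactor $(-1)^m/m!$ in \eqref{dn-vs-partial} yields the claimed formula for $\dn^{s-1}\Gamma_{s-1}$. Substituting this together with the elementary identification $m_sM_s(x,z)=\omega_N^{-1}\delta(x)^s|x-z|^{-N}$ (which follows from $k_{N,1}=(2\omega_N)^{-1}$ and \eqref{c}) into the right-hand side, and using the explicit form of $\Gamma_s-\Gamma_{s-1}$ on the left-hand side, the common factors $(-1)^m\gamma_{N,\sigma}\delta(x)^{s-1}(|y|^2-1)^{1-s}$ cancel and the identity reduces to the purely Euclidean formula
$$\int_{\partial B}\frac{dz}{|x-z|^N|y-z|^N}=\frac{\omega_N(|y|^2-|x|^2)}{|x-y|^N(|y|^2-1)(1-|x|^2)},\qquad x\in B,\ y\in\R^N\setminus\overline B.$$

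To prove this remaining integral identity I would observe that the function
$$h(z):=\frac{|y|^2-|z|^2}{|y-z|^N}$$
is harmonic in $z$ on $\R^N\setminus\{y\}$. Writing $h=ab$ with $a(z)=|y|^2-|z|^2$ and $b(z)=|y-z|^{-N}$, one has $\Delta a=-2N$, $\Delta b=2N|y-z|^{-N-2}$, $\nabla a\cdot\nabla b=-2Nz\cdot(y-z)|y-z|^{-N-2}$, and $\Delta(ab)=a\Delta b+2\nabla a\cdot\nabla b+b\Delta a$ collapses identically to zero after expanding $|y-z|^2=|y|^2-2y\cdot z+|z|^2$. For $y\in\R^N\setminus\overline B$, $h$ is therefore harmonic on $B$ and continuous on $\overline B$ with boundary trace $z\mapsto(|y|^2-1)/|y-z|^N$, so the classical interior Poisson representation
$$h(x)=\int_{\partial B}\frac{1-|x|^2}{\omega_N|x-z|^N}\cdot\frac{|y|^2-1}{|y-z|^N}\,dz$$
rearranges precisely into the required integral identity. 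The only conceptually nontrivial move is spotting the harmonic function $h$; once it is recognized, the remainder of the proof is bookkeeping of signs and constants plus one application of the classical Poisson formula.
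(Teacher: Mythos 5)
Your proof is correct and follows essentially the same route as the paper: both proofs reduce to the classical Poisson representation of the same harmonic function (your $h(z)=(|y|^2-|z|^2)/|y-z|^N$ is the paper's $p$ up to the constant factor $(|y|^2-1)^{-1}$), and both compute the trace $\dn^{s-1}\Gamma_{s-1}(\cdot,y)$ before combining. The only minor difference is cosmetic: you compute the trace directly via the Leibniz expansion of $\partial^m/\partial(|x|^2)^m$, whereas the paper invokes Lemma~\ref{prop:de} to pass from the weighted derivative to the quotient limit $\lim_{\theta\to z}\Gamma_{s-1}(\theta,y)/(1-|\theta|^2)^{s-1}$; your version is self-contained, the paper's reuses an auxiliary lemma.
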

\begin{proof}
Fix $y\in \R^N\backslash \overline{B}$ and let $p:B\to\R$ be given by $p(x):=|x-y|^{-N}\Big(1+\frac{1-|x|^2}{|y|^2-1}\Big).$  Note that
\begin{align}
(-1)^{m-1}\gamma_{N,\sigma}\left(\frac{1-|x|^2}{|y|^2-1}\right)^{s-1}p(x)
& =\Gamma_{s-1}(x,y)-\Gamma_{s}(x,y)\qquad \text{ for }x\in B.\label{poi1}
\end{align}
Moreover, a direct calculation shows that $-\Delta p(x) = 0$ for $x\in B$, and thus, by uniqueness, we can represent $p$ using the Poisson kernel for the Laplacian, namely,
\begin{align*}
p(x)=\int_{\partial B}\frac{2M_1(x,z)}{|y-z|^{N}}\ dz=2k_{N,1}\int_{\partial B}\frac{1-|x|^2}{|x-z|^N|y-z|^{N}}\ dz,\qquad x\in B.
\end{align*}
Therefore,
\begin{align}\label{poi2}
(-1)^{m-1}\gamma_{N,\sigma}\left(\frac{1-|x|^2}{|y|^2-1}\right)^{s-1}p(x)=(-1)^{m-1}\gamma_{N,\sigma}\frac{2s\, k_{N,1}}{k_{N,s}}\int_{\partial B} \frac{M_s(x,z)}{(|y|^2-1)^{s-1}|y-z|^{N}}\ dz.
\end{align}
The Proposition now follows from \eqref{poi1}, \eqref{poi2}, since
\begin{align*}
\frac{(-1)^{m-1}\gamma_{N,\sigma}}{(|y|^2-1)^{s-1}|y-z|^{N}}
=\lim_{B\ni\theta\to z} \frac{\Gamma_{s-1}(\theta,y)}{(1-|\theta|^2)^{s-1}}
=\dn^{s-1}(\Gamma_{s-1}(\cdot,y))(z),
\end{align*}
by Lemma \ref{prop:de} and the fact that $\dn^{\sigma+k}\Gamma_{s-1}(z,y)=0$ for $k\in\{-1,0,\ldots,s-2\}$.
\end{proof}

\begin{proof}[Proof of Theorem~\ref{poisson:thm}]
Let $\psi\in \cL_\sigma^1$ with $\psi=0$ in $B_{r}$ for some $r>1$ and fix $\rho\in(1,r)$. 
We show first that $u$ given by \eqref{u:def} belongs to  $C^{\infty}(B)\cap C^s(B_{r})\cap H^s(B_{\rho})$, $\rho\in(1,r),$ and that $u$ is a pointwise solution of \eqref{u:prob}.
Note that,
\begin{align}\label{u:dec}
u(x)= \delta(x)^s \phi(x), \quad \phi(x):=(-1)^m \gamma_{N,\sigma}\int_{\R^N\backslash\overline{B}_{r}} \frac{\psi(y)}{|x-y|^{N}(|y|^2-1)^s}\ dy, \qquad x\in\ B_{r}.
\end{align}
Thus $u\in C^{\infty}(B)\cap C^s(B_{r})$, and $u\in H^s(B_{\rho})$ by Lemma~\ref{l:ws} for $\rho\in(1,r)$.  We now argue by induction on $s$. For $s\in(0,1)$ the claim is known, see for example \cite[Lemma 2.5]{FW16} (or \cite[Lemma 1.13]{L72}, \cite[Theorem 1.2]{nicola}, \cite[Theorem 2.10]{B16}). Consider now $s=m+\sigma>1$ with $\sigma\in(0,1)$ and note that, by Proposition~\ref{Gamma:it} and a standard interchange of integrals, we have that
\begin{align}
u(x)&=\int_{\R^N\setminus\overline{B}}[\Gamma_{s-1}(x,y)- m_s\int_{\partial B}M_s(x,z)\,\dn^{s-1}(\Gamma_{s-1}(\cdot,y))(z) \;dz]\psi(y)\ dy \nonumber\\&
= \int_{\R^N\setminus\overline{B}}\Gamma_{s-1}(x,y)\psi(y)\ dy 
-m_s\int_{\partial B}M_s(x,z) 
\int_{\R^N\backslash\overline{B}_{r}} \frac{(-1)^{m-1}\gamma_{N,\sigma}\psi(y)}{(|y|^2-1)^{s-1}|y-z|^{N}}  
\ dy\ dz.\label{eq:pth}
\end{align}
Observe that the first term in \eqref{eq:pth} is $(s-1)$-harmonic in $B$, by the induction hypothesis, and the second term is $s$-harmonic in $B$, by Proposition \ref{lem:sharm-intro}. Since $(s-1)$-harmonic functions are $s$-harmonic (\emph{cf.}  \cite[Remark 6.15.4]{AJS16}), we have that $(-\Delta)^m (-\Delta)^\sigma u(x) = 0$ for all $x\in B$.  

\medskip

To argue uniqueness in $C^s(\overline{B})\cap H^s(B)$, let $v\in C^s(\overline{B})\cap H^s(B)$ be a pointwise solution of $(-\Delta)^s v=0$ in $B$ with $v=\psi$ in $\R^N\backslash\overline{B}$. Then $w:=u-v$ belongs to $\cH_0^s(B)$ (see \emph{e.g.} \cite[Remark 6.8]{AJS16} or \cite[Remark 3.6]{AJS16b}). Using that $C^\infty_c(B)$ is dense in $\cH_0^s(B)$ and integration by parts (see Lemma \ref{ibyp}), we have that $w$ is a weak solution (see Subsection \ref{Notation}) of $(-\Delta)^s w = 0$ in $B$ with $w=0$ in $\R^N\backslash\overline{B}$. Then, by uniqueness of weak solutions \cite[Corollary 3.6]{AJS16a}, we obtain that $w\equiv 0$ in $\R^N$, as desired.

\medskip

It remains to show \eqref{GsDsG}. Fix $s>1$, $s\notin\N$. By \eqref{Gs:decomp}, the function $x\mapsto \int_BG_s(x,y)\ dy$ belongs to $C^s_0(B)$. Let $x\in B$ and $y\in\R^N\setminus\overline{B}$, then
\begin{align}\label{DGs}
&\widetilde\Gamma_s(x,y):=-(-\Delta)_y^m(-\Delta)_y^\sigma G_s(x,y)
=(-\Delta)_y^m\int_B\frac{G_s(x,z)}{|z-y|^{N+2\sigma}}\;dz
=C\int_B\frac{G_s(x,z)}{|z-y|^{N+2s}}\;dz
\end{align}
for some constant $C\in \R$.  In particular, $\widetilde\Gamma_s(\cdot,y)\in C^\infty(B)\cap C^s_0(B)$ for $y\in\R^N\setminus\overline B$. Now, let $\psi$ be as above and $u:\R^N\to\R$ be given by
$u(x):=\int_{\R^N\setminus\overline{B}}\widetilde\Gamma_s(x,y)\psi(y)\;dy+\psi(x).$ We claim that $u$ is $s$-harmonic in $B$. Indeed, let $\phi\in C^\infty_c(B)$, then
\begin{align*}
\int_B u(x)\,&(-\Delta)^s\phi(x)\;dx=\int_B\int_{\R^N\setminus{B}}\widetilde\Gamma_s(x,y)\psi(y)\;dy\ (-\Delta)^s\phi(x)\;dx\\
&= \int_{\R^N\setminus{B}}\int_B\widetilde\Gamma_s(x,y)(-\Delta)^s\phi(x)\;dx\ \psi(y)\;dy\nonumber\\
&=-\int_{\R^N\setminus{B}}(-\Delta)^s_y \int_B G_s(x,y)(-\Delta)^s\phi(x)\;dx\ \psi(y)\;dy=-\int_{\R^N\setminus{B}}(-\Delta)^s\phi(y)\psi(y)\;dy,
\end{align*}
where the interchange of integral and derivatives is due to the dominated convergence theorem.  Thus, we have that
\begin{align*}
\int_{\R^N} u\,(-\Delta)^s\phi\;dx=\int_B u\,&(-\Delta)^s\phi\;dx+\int_{\R^N\setminus{B}}\psi(-\Delta)^s\phi\;dy=0\qquad \text{ for all }\phi\in C^\infty_c(B).
\end{align*}
Furthermore, by \eqref{DGs},
\begin{align*}
u(x)=\int_{\R^N\setminus\overline{B}}\widetilde\Gamma_s(x,y)\psi(y)\;dy
=C\int_BG_s(x,z)\int_{\R^N\setminus\overline{B}}\frac{\psi(y)}{|z-y|^{N+2s}}\;dy\;dz\qquad \text{ for }x\in B
\end{align*}
and therefore $u\in C^\infty(B)\cap H^s(B)\cap C^s(\overline{B})$, by the properties of the Green function $G_s$ (see \cite[Theorem 1.4]{AJS16} or \cite[Theorem 1.1]{AJS16b}). By Lemma \ref{ibyp}, $u$ is a pointwise solution of $(-\Delta)^s u=0$ in $B$ with $u=\psi$ in $\R^N\backslash\overline{B}$, and by the uniqueness shown in the first part of this proof we have that
\begin{align*}
u(x)=\int_{\R^N\setminus\overline{B}}\widetilde\Gamma_s(x,y)\psi(y)\;dy
=\int_{\R^N\setminus\overline{B}}\Gamma_s(x,y)\psi(y)\;dy\qquad \text{ for }x\in B.
\end{align*}
Since we may choose $\psi\in C(\R^N)$ with $\operatorname{supp}\psi\subset \R^N\setminus\overline{B}$ arbitrarily, the fundamental lemma of calculus of variations implies that $\widetilde\Gamma_s(x,y)=\Gamma_s(x,y)$ for all $x\in B$ and $y\in\R^N\backslash\overline{B}$.  Since $G_s(x,\cdot)\equiv 0$ if $x\not\in B$, we have that \eqref{GsDsG} also holds in $\R^N\backslash B$, and this ends the proof.
\end{proof}

 If the outside data $\psi$ does not belong to $\cL_\sigma^1$, then, in general, it is \emph{not} possible to compute pointwisely the fractional Laplacian $(-\Delta)^s u = (-\Delta)^m(-\Delta)^\sigma u$, with $u=\cH_s\psi$, see \eqref{fractional} and Remark \ref{pointwise:rmk} below.  However, a problem may still have a unique smooth \emph{distributional solution}.  To be more precise, for $s>0$ let 
\begin{align*}%\label{R:space}
 \cL^1_{s}:=\left\{u\in L^1_{loc}(\R^N)\;:\; \|u\|_{\cL^1_{s}}<\infty \right\}, \qquad \|u\|_{\cL^1_{s}}:=\int_{\R^N}\frac{|u(x)|}{1+|x|^{N+2s}}\ dx.
\end{align*}
We use the following estimate.
\begin{lemma}[See Lemma 3.9, \cite{AJS16}]\label{decay-s-smooth}
For any $s>0$, $\phi\in C^{\infty}_c(B)$ there is $C=C(\phi,s,N)>0$ such that
\begin{equation*}%\label{decay-s-smooth2}
|(-\Delta)^s\phi(x)|\leq \frac{C}{1+|x|^{N+2s}}\qquad \text{ for }x\in \R^N.
\end{equation*}
In particular, $u(-\Delta)^s\phi\in L^1(\R^N)$ for $u\in \cL^1_s$ and $\phi\in C^{\infty}_c(B)$. 
\end{lemma}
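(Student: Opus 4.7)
The idea is to split according to whether $s$ is an integer and, in the noninteger case, to combine the singular-integral representation of $(-\Delta)^\sigma$ on compactly supported smooth data with the extra decay gained by applying $(-\Delta)^m$. If $s\in\N$, then $(-\Delta)^s\phi\in C_c^\infty(B)$ so $(-\Delta)^s\phi$ vanishes outside a compact set, making the bound trivial. From now on assume $s=m+\sigma$ with $\sigma\in(0,1)$ and $m\in\N_0$.

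Since $\phi\in C_c^\infty(B)$, for every $x\in\R^N\setminus\supp\phi$ (and in particular for every $x$ with $|x|\geq 1$) the principal value in \eqref{fractional} collapses to a classical integral
\begin{equation*}
(-\Delta)^\sigma\phi(x)=-c_{N,\sigma}\int_B\frac{\phi(y)}{|x-y|^{N+2\sigma}}\,dy.
\end{equation*}
For $|x|\geq 2$ and $y\in\supp\phi$ we have $|x-y|\geq |x|/2$, so differentiating under the integral gives, for every multi-index $\alpha$, a bound of the form $|\partial_x^\alpha (-\Delta)^\sigma\phi(x)|\leq C_\alpha |x|^{-(N+2\sigma+|\alpha|)}$ on $\{|x|\geq 2\}$. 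Applying $(-\Delta)^m$ (which is a linear combination of partial derivatives of total order $2m$) then yields
\begin{equation*}
|(-\Delta)^s\phi(x)|=|(-\Delta)^m(-\Delta)^\sigma\phi(x)|\leq C|x|^{-(N+2\sigma+2m)}=C|x|^{-(N+2s)},\qquad |x|\geq 2.
\end{equation*}

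On the complementary region $|x|\leq 2$ I would invoke the well-known fact that $(-\Delta)^\sigma\phi\in C^\infty(\R^N)$ (see \emph{e.g.} \cite{S07}), so that $(-\Delta)^m(-\Delta)^\sigma\phi$ is continuous and hence uniformly bounded on $\overline{B_2}$. Merging the two regimes and enlarging the constant produces the desired global estimate $|(-\Delta)^s\phi(x)|\leq C/(1+|x|^{N+2s})$ for all $x\in\R^N$. The ``in particular'' assertion is then immediate:
\begin{equation*}
\int_{\R^N}|u(x)(-\Delta)^s\phi(x)|\,dx\leq C\int_{\R^N}\frac{|u(x)|}{1+|x|^{N+2s}}\,dx=C\|u\|_{\cL^1_s}<\infty.
\end{equation*}

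There is essentially no substantive obstacle here; the one thing to watch is the bookkeeping of exponents, namely that each partial derivative of the kernel $|x-y|^{-(N+2\sigma)}$ lowers the decay exponent in $|x|$ by one unit for $|x|$ large, so that the full differential operator $(-\Delta)^m$ contributes precisely the extra factor $|x|^{-2m}$ needed to upgrade $|x|^{-(N+2\sigma)}$ to $|x|^{-(N+2s)}$.
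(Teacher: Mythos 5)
Your proof is correct, and it is the standard and essentially unavoidable argument for this kind of decay estimate (the paper itself only cites \cite[Lemma 3.9]{AJS16} without reproducing a proof, but the route there is the same). The key points all check out: for $|x|\geq 2$ the representation $(-\Delta)^\sigma\phi(x)=-c_{N,\sigma}\int_B\phi(y)|x-y|^{-(N+2\sigma)}\,dy$ is valid on a full neighborhood of $x$, differentiation under the integral is justified, each derivative of the kernel costs one power of $|x-y|\geq |x|/2$, and $(-\Delta)^m$ contributes exactly the extra $|x|^{-2m}$; on $\overline{B_2}$ one invokes $(-\Delta)^\sigma\phi\in C^\infty(\R^N)$ and compactness.
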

 
 Our result on (distributional) $s$-harmonic functions in $\cL^1_s$ is the following.
  \begin{cor}\label{Ls}
  Let $\sigma\in(0,1)$, $m\in\N$, $s=m+\sigma>1$, and $\psi\in \cL^1_{s}$ with $\psi=0$ in $B_r$, $r>1$. Then $u=\ch_s\psi\in \cL^1_s\cap C^{\infty}(B)\cap C^{s}(B_{1+r})\cap H^s(B_{\rho})$ for any $\rho\in(1,r)$. Moreover, $u$ satisfies \eqref{u:prob} in distributional sense, i.e.,
 \begin{align}\label{dist:sol}
  \int_{\R^N}u(x)(-\Delta)^s\phi(x)\ dx=0 \quad \text{ for all $\phi\in C^{\infty}_c(B)$}\qquad \text{ and }\qquad u=\psi\quad \text{ in }\quad \R^N\backslash B.
 \end{align}
 Furthermore, $u$ is the unique function in $C^s(\overline{B})\cap H^s(B)$ satisfying \eqref{dist:sol}.
 \end{cor}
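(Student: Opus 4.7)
The plan is to reduce to Theorem~\ref{poisson:thm} by truncating $\psi$, and then to pass to the limit using the decay bound on test functions provided by Lemma~\ref{decay-s-smooth}.

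For the regularity of $u=\cH_s\psi$, I would repeat the representation $u(x)=\delta(x)^s\varphi(x)$ from the proof of Theorem~\ref{poisson:thm}, with
\begin{align*}
\varphi(x)=(-1)^m\gamma_{N,\sigma}\int_{\R^N\setminus\overline{B}_r}\frac{\psi(y)}{|x-y|^N(|y|^2-1)^s}\;dy.
\end{align*}
For $|y|\geq r>1$ and $x$ in a compact subset of $B_r$ one has $(|y|^2-1)^s\geq c\,|y|^{2s}$ and $|x-y|\geq c'|y|$, so the integrand is majorized by $C|\psi(y)|/|y|^{N+2s}$, which is integrable precisely because $\psi\in\cL^1_s$. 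Differentiation under the integral sign yields $\varphi\in C^\infty(B_r)$, so $u\in C^\infty(B)$ with the expected Hölder control of order $s$ near $\partial B$. The Sobolev statement $u\in H^s(B_\rho)$ for $\rho\in(1,r)$ follows from Lemma~\ref{l:ws} applied to $\delta^s\varphi$, while $u\in\cL^1_s$ is immediate since $u=\psi$ outside $B$ and $u$ is bounded in a neighborhood of $\overline{B}$.

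For the distributional equation, I would approximate $\psi$ by $\psi_n:=\psi\,\chi_{B_n}$ (with $n>r$) and set $u_n:=\cH_s\psi_n$. Since $\psi_n$ has compact support it lies in $\cL^1_\sigma$, so Theorem~\ref{poisson:thm} gives $u_n\in C^s(\overline{B})\cap H^s(B)\cap\cL^1_\sigma$ with $(-\Delta)^s u_n=0$ pointwise in $B$, and Lemma~\ref{ibyp} turns this into
\begin{align*}
\int_{\R^N}u_n(x)(-\Delta)^s\phi(x)\;dx=0\qquad\text{for every }\phi\in C^\infty_c(B).
\end{align*}
I would then split this identity into an interior piece over $B$ and an exterior piece over $\R^N\setminus B$. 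On $B$, the dominant $C\,\delta(x)^s|\psi(y)|/|y|^{N+2s}$ from the previous step gives $u_n\to u$ boundedly and hence convergence of the interior integral by dominated convergence. On $\R^N\setminus B$, $u_n=\psi_n\to\psi$ pointwise with $|\psi_n|\leq|\psi|$, and Lemma~\ref{decay-s-smooth} supplies the integrable dominant $C|\psi(y)|/(1+|y|^{N+2s})$ thanks to $\psi\in\cL^1_s$. Combining the two limits passes the identity to $u$.

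Uniqueness I would handle exactly as in the last paragraph of the proof of Theorem~\ref{poisson:thm}: if $v\in C^s(\overline{B})\cap H^s(B)$ is another solution of \eqref{dist:sol}, then $w:=u-v$ vanishes on $\R^N\setminus\overline{B}$ and belongs to $\cH^s_0(B)$ by \cite[Remark 6.8]{AJS16}; identity \eqref{lclaim} of Lemma~\ref{ibyp} combined with density of $C^\infty_c(B)$ in $\cH^s_0(B)$ upgrades the distributional condition to $\cE_s(w,\varphi)=0$ for all $\varphi\in\cH^s_0(B)$, and \cite[Corollary 3.6]{AJS16a} concludes $w\equiv 0$. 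The main technical point, as I see it, is the joint use of the interior tail bound on $\Gamma_s$ and Lemma~\ref{decay-s-smooth} in the limit passage: together they are precisely what allow one to replace the narrower growth class $\cL^1_\sigma$ of Theorem~\ref{poisson:thm} by the larger class $\cL^1_s$, at the (necessary) price of weakening the equation from pointwise to distributional.
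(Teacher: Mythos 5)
Your proof is correct and follows essentially the same route as the paper's: the regularity claims come from the factorization $u=\delta^s\varphi$ combined with Lemma~\ref{l:ws}, the distributional equation is obtained by truncating $\psi$ to $\psi_n=\psi\chi_{B_n}\in\cL^1_\sigma$, invoking Theorem~\ref{poisson:thm} and Lemma~\ref{ibyp} on each $u_n=\cH_s\psi_n$, and passing to the limit via dominated convergence with the $\Gamma_s$-tail bound on $B$ and Lemma~\ref{decay-s-smooth} on the complement, and uniqueness is handled verbatim as in the last paragraph of the proof of Theorem~\ref{poisson:thm}. Your closing remark about the interplay between the kernel decay and the $\cL^1_s$ class pinpointing why one must weaken the equation from pointwise to distributional captures exactly the point of Corollary~\ref{Ls}.
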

 \begin{proof}
 Note that $u$ satisfies \eqref{u:dec} and hence by Lemma \ref{l:ws} it follows that 
 \begin{align}\label{reg:u:s}
 u:=\cH_s \psi\in \cL^1_s\cap C^{\infty}(B)\cap C^s(B_r)\cap H^s(B_\rho) \qquad\text{ for } \rho\in(1,r). 
 \end{align}
Let $\psi_n:=\psi \chi_{B_n}\in \cL^1_{\sigma}$ and $u_n:=\ch_s \psi_n$ for $n\in\N$.  Clearly $u_n\to u$ pointwisely in $\R^N$.  By Theorem \ref{poisson:thm}, $u_n\in C^{\infty}(B)\cap C^s(B_r)\cap H^s(B_\rho)\cap \cL^1_{\sigma}$, $\rho\in(1,r)$, and $u_n$ solves
\begin{align}\label{soln}
(-\Delta)^su_n=0\quad \text{ pointwise in $B$},\qquad \text{ and }\qquad u_n=\psi_n=\psi \chi_{B_n}\quad \text{ on $\R^N\backslash\overline{B}$.}  
\end{align}
Fix $\phi\in C^{\infty}_c(B)$, then by Lemma \ref{decay-s-smooth}, there is $C>0$ such that
\begin{align*}
 |u_n(x)(-\Delta)^s\phi(x)|\leq \frac{C}{1+|x|^{N+2s}}\Big( \int_{\R^N\backslash\overline{B}_r}\Gamma_s(x,y)|\psi(y)|\;dy+|\psi(x)|\Big)\qquad \text{ for }x\in\R^N,\ n\in\N.
\end{align*}
Then, by \eqref{reg:u:s}, \eqref{soln}, Lebesgue dominated convergence theorem, and Lemma \ref{ibyp},
\begin{align*}
\int_{\R^N}u\, (-\Delta)^s\phi\ dx=\lim_{n\to\infty} \int_{\R^N}u_n\, (-\Delta)^s\phi\ dx=\lim_{n\to\infty} \int_{B}(-\Delta)^m(-\Delta)^\sigma u_n\, \phi\ dx=0.
\end{align*}
The uniqueness in $C^s(\overline{B})\cap H^s(B)$ now follows similarly as in the proof of Theorem~\ref{poisson:thm}. 
 \end{proof}

\subsection{On maximum principles}

In this subsection, we show Theorem \ref{even:thm2} and Theorem \ref{connected:cor}. In particular, we use the explicit formula for the Poisson kernel $\Gamma_s$ to yield information about the sign of the Green function associated to two disjoint balls. This has a close relationship with the validity or failure of maximum principles for $(-\Delta)^s$.  For the reader's convenience, we provide in the appendix (see Proposition \ref{existenceuniqueness}) a proof of the existence of Green functions $G_\Omega$ for $(-\Delta)^s$ in smooth bounded domains $\Omega$.  In particular, $G_B\equiv G_s$ with $G_s$ as in \eqref{green}.

\begin{prop}\label{disjoint-sets-prop}
Let $N\in\mathbb N$, $s>0$, and $A\subset \R^N$ be an open smooth set such that $B_{r}\cap A=\emptyset$ for some $r>1$. The unique Green function $G_{\Omega}$ of $(-\Delta)^s$ in $\Omega:=B\cup A$ given by Proposition \ref{existenceuniqueness} satisfies
\begin{align}
G_{\Omega}(x,y) &= G_B(x,y)+\int_{A}\Gamma_s({x},z)\,G_{\Omega}({y},z)\;dz&&\text{ for }\  x,y\in B,\ x\neq y, \label{sameball}\\
G_{\Omega}(x,y) &= \ch_s[G_{\Omega}(\cdot,{y})](x)=\int_{A}\Gamma_{s}({x},z)\,G_{\Omega}(z,{y})\;dz &&\text{ for }\   x\in B,\ y\in A\label{diffball}.
\end{align}
\end{prop}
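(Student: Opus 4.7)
The plan is to apply the Poisson-kernel representation from Theorem~\ref{poisson:thm} to the function $x\mapsto G_\Omega(x,y)$, treating $y$ as a parameter and distinguishing the two cases $y\in A$ and $y\in B$. In both cases, the key observation is that $\Omega$ is a disjoint union, so the ``other component'' $A$ plays the role of exterior data for a Dirichlet problem on $B$. Since $A$ is separated from $\overline{B}$ by the positive distance $r-1>0$, the exterior data is automatically supported away from $\partial B$, which is precisely the integrability condition required in Theorem~\ref{poisson:thm}.

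For \eqref{diffball}, fix $y\in A$ and set $u(x):=G_\Omega(x,y)$. By the defining property of $G_\Omega$ (Proposition~\ref{existenceuniqueness}), $(-\Delta)^s u=0$ pointwise in $B$, because the source $\delta_y$ is supported in $A$ and thus away from $B$; moreover $u=0$ on $\R^N\setminus\Omega$, and since $A\cap B_r=\emptyset$ we also get $u=0$ on $B_r\setminus\overline{B}$. Standard interior regularity for $G_\Omega(\cdot,y)$ away from the pole $y$ gives $u\in C^s(\overline{B})\cap H^s(B)\cap\cL^1_\sigma$ (boundedness off a neighborhood of $y$ together with the compact support in $\Omega$ handles $\cL^1_\sigma$). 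Setting $\psi:=u|_{\R^N\setminus\overline{B}}$, which is supported in $A$, Theorem~\ref{poisson:thm} applies and yields
\begin{equation*}
u(x)=\int_{\R^N\setminus\overline{B}}\Gamma_s(x,z)\psi(z)\,dz=\int_A\Gamma_s(x,z)\,G_\Omega(z,y)\,dz\qquad\text{for } x\in B,
\end{equation*}
which is \eqref{diffball}. The identity with $\cH_s$ is immediate from \eqref{GHHs} once one notes $\chi_{\R^N\setminus\overline{B}}(x)=0$ for $x\in B$.

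For \eqref{sameball}, fix $y\in B$ and consider the difference $w(x):=G_\Omega(x,y)-G_B(x,y)$. Both Green functions satisfy $(-\Delta)^s(\cdot)=\delta_y$ in $B$ (by Proposition~\ref{existenceuniqueness}), so their singularities cancel and $(-\Delta)^s w=0$ pointwise in $B$. On $\R^N\setminus\overline{B}$ we have $G_B(\cdot,y)=0$, hence $w=G_\Omega(\cdot,y)$ there, which is again supported in $A\subset\R^N\setminus\overline{B_r}$ and vanishes in $B_r\setminus\overline{B}$. The regularity $w\in C^s(\overline{B})\cap H^s(B)$ follows from the corresponding properties of $G_\Omega(\cdot,y)$ and $G_B(\cdot,y)$ near $\partial B$, neither of which feels the pole $y\in B$. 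Applying Theorem~\ref{poisson:thm} to $w$ with exterior data $\psi=w|_{\R^N\setminus\overline{B}}=G_\Omega(\cdot,y)|_A$ gives
\begin{equation*}
w(x)=\int_A\Gamma_s(x,z)\,G_\Omega(z,y)\,dz\qquad\text{for } x\in B,
\end{equation*}
and rearranging $G_\Omega(x,y)=G_B(x,y)+w(x)$ yields \eqref{sameball}.

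The main obstacle is verifying the regularity hypotheses $C^s(\overline{B})\cap H^s(B)$ and $\cL^1_\sigma$ required for the uniqueness part of Theorem~\ref{poisson:thm}: one must know that $G_\Omega(\cdot,y)$ is well-behaved up to $\partial B$ for $y$ outside a neighborhood of $\partial B$, and likewise that the cancellation $G_\Omega-G_B$ produces an element of the right function space when $y\in B$. These facts should follow from the general construction of $G_\Omega$ provided in Proposition~\ref{existenceuniqueness} together with interior regularity for $s$-harmonic functions (Lemma~\ref{u:2:l} and the argument in the proof of Theorem~\ref{poisson:thm}); the rest of the proof is then a direct matching of boundary-value problems via the uniqueness in Theorem~\ref{poisson:thm}.
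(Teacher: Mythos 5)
Your argument is essentially correct but takes a genuinely different route from the paper's. You work directly at the level of a single Green-function slice $G_\Omega(\cdot,y)$ (or the difference $G_\Omega(\cdot,y)-G_B(\cdot,y)$), verify it is a \emph{pointwise} solution of a homogeneous Dirichlet problem on $B$ with exterior data vanishing near $\partial B$, and then invoke the pointwise uniqueness from Theorem~\ref{poisson:thm}. The paper instead never looks at $G_\Omega(\cdot,y)$ pointwise: it fixes an arbitrary test function $\psi\in C^\infty_c(B)$ (resp.\ $\phi\in C_c^\infty(A)$), forms $v(x)=\int_\Omega G_\Omega(x,y)\psi(y)\,dy-\int_B G_B(x,y)\psi(y)\,dy-\cH_s[\int_B G_\Omega(\cdot,y)\psi(y)\,dy](x)$, observes that $v\in\cH^s_0(B)$ is a \emph{weak} solution of $(-\Delta)^s v=0$, concludes $v\equiv 0$ by uniqueness of weak solutions, and then strips off $\psi$ with the fundamental lemma. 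The trade-off is clear: the paper's weak approach buys immunity from the pointwise issues that you yourself flag as the ``main obstacle.'' Your approach requires you to establish (i) that $G_\Omega(\cdot,y)\in C^s(\overline B)\cap H^s(B)$ for $y\in A$ (resp.\ the same for the difference when $y\in B$), and (ii) that $(-\Delta)^s G_\Omega(\cdot,y)=0$ \emph{pointwise} in $B$. Neither is trivial: Proposition~\ref{existenceuniqueness} constructs $G_\Omega$ only as a weak/distributional Green function, and ``standard interior regularity'' does not by itself give $C^s$ up to $\partial B$. Both facts do follow, but via Lemma~\ref{corrector-term} (which gives $H(\cdot,y)\in C^s(\R^N)\cap H^s_{\mathrm{loc}}(\R^N)\cap C^\infty(\Omega)$, hence the needed boundary regularity) and Lemma~\ref{ibyp} (which upgrades the distributional identity $\int G_\Omega(\cdot,y)(-\Delta)^s\phi\,dx=0$ for $\phi\in C_c^\infty(B)$ to the pointwise identity $(-\Delta)^m(-\Delta)^\sigma G_\Omega(\cdot,y)=0$ in $B$, using that $G_\Omega(\cdot,y)$ is $C^\infty$ in $B$ with compact support, hence in $\cL^1_\sigma$). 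You gesture at these but should name them explicitly: the $\cL^1_\sigma$ membership is the crucial point that makes the pointwise Laplacian computable at all (cf.\ Remark~\ref{pointwise:rmk}), and without Lemma~\ref{ibyp} the passage from ``weak Green function'' to ``pointwise $s$-harmonic away from the pole'' would be unjustified. Once those two lemmas are invoked, your argument closes and gives an arguably more transparent proof of both \eqref{sameball} and \eqref{diffball} than the test-function version.
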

\begin{proof}
To show \eqref{sameball}, let $\psi\in C^\infty_c(B)$ and $v:\R^N\to\R$ be given by
\begin{align*}
v(x) &:= \int_\Omega G_\Omega(x,y)\psi(y)\;dy-\int_B G_B(x,y)\psi(y)\;dy - \ch_s \Big[\int_B G_\Omega(\cdot,y)\,\psi(y)\;dy \Big](x),\quad x\in B,
\end{align*}
and $v=0$ on $\R^N\backslash\overline{B}$.  Note that $v\in \cH^s_0(B)$ and solves weakly $(-\Delta)^s v = 0$ in $B$.  Then $v \equiv 0$ in $\R^N$, by uniqueness (\cite[Corollary 3.6]{AJS16a}). Since $\psi$ is arbitrary, \eqref{sameball} follows from the fundamental lemma of calculus of variations. Equality \eqref{diffball} follows similarly: let $\phi\in C^\infty_c(A)$ and $w\in \cH^s_0(B)$ given by
$w(x) := \int_\Omega G_\Omega(x,y)\phi(y)\;dy - \ch_s[\,\int_{A} G_\Omega(\cdot,y)\,\phi(y)\;dy](x)$. Then $w$ solves weakly $(-\Delta)^s w=0$ in $B$ and, as before, $w=0$ in $B$, by uniqueness. Since $\phi$ is arbitrary, \eqref{diffball} follows as before. 
\end{proof}

Let us now introduce some notation that we use below in the proof of Theorem \ref{even:thm2}: for $t>2$, set $B^t=B_1(te_1)$, $\Omega(t)=B\cup B^t$, and let us analyze the corresponding Green function $G_{\Omega(t)}$, $t>2$.  Let $R_t\ :\ \R^N \to\ \R^N,$ $x=(x_1,x') \mapsto\ (t-x_1,x'),$ denote the reflection with respect to the hyperplane $\{2x_1=t\}$. We note that $B^t=R_t B$ and $R_t\Omega(t)=\Omega(t)$.   
\begin{lemma}\label{Rt:lem} 
Let $x\in B$ and $t\geq2$, then $1-|x|^2\leq |R_tx|^2-1$.
\end{lemma}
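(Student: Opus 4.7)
The plan is to reduce the claim to an elementary nonnegativity statement by expanding squares. Write $x = (x_1, x')\in \R\times\R^{N-1}$, so that $|x|^2 = x_1^2 + |x'|^2$ and $|R_t x|^2 = (t-x_1)^2 + |x'|^2$. The desired inequality $1 - |x|^2 \leq |R_tx|^2 - 1$ is then equivalent to $|R_tx|^2 + |x|^2 \geq 2$, i.e.,
\[
(t-x_1)^2 + x_1^2 + 2|x'|^2 \geq 2.
\]

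Next I would reduce to the extremal case $t=2$. Differentiating the left-hand side in $t$ yields $2(t-x_1)$, which is strictly positive for every $t\geq 2$ and every $x\in B$ (since $x_1 \leq |x| < 1 \leq t/2 \leq t$). Hence the left-hand side is nondecreasing in $t$ on $[2,\infty)$, and it suffices to verify the inequality at $t=2$.

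Finally, at $t=2$ we simply complete the square:
\[
(2-x_1)^2 + x_1^2 + 2|x'|^2 - 2 \;=\; 2 - 4x_1 + 2x_1^2 + 2|x'|^2 \;=\; 2(1-x_1)^2 + 2|x'|^2 \;\geq\; 0,
\]
which finishes the proof. There is no real obstacle here: the lemma is a direct computation recorded for later use (presumably in the sign analysis of $G_{\Omega(t)}$ via the identities \eqref{sameball}--\eqref{diffball} and the explicit expression \eqref{PK} for $\Gamma_s$, where the ratio $(1-|x|^2)/(|R_tx|^2-1)$ naturally appears).
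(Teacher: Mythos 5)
Your proof is correct and follows essentially the same route as the paper's: both rewrite the claim as $|R_tx|^2+|x|^2\geq 2$, expand in coordinates, and use $t\geq 2$ together with $x_1<1$. The paper chains inequalities directly ($t^2-2tx_1+2x_1^2+2|x'|^2\geq 2t(1-x_1)+2x_1^2\geq 4(1-x_1)+2x_1^2\geq 2$), whereas you first reduce to $t=2$ by a monotonicity-in-$t$ argument and then complete the square; this is a cosmetic reorganization of the same elementary computation.
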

\begin{proof}
Let $x\in B$ and $t\geq2$.  Since $x_1<1$ we have
\[
|R_tx|^2+|x|^2=t^2-2tx_1+2x_1^2+2|x'|^2\geq 2t(1-x_1)+2x_1^2\geq 4(1-x_1)+2x_1^2\geq 2,
\]
and the claim follows.
\end{proof}

For $t\geq2$, $y\in B^t$, and $z\in B\subset\R^N\backslash B^t$, let
\begin{align}\label{reflected-pois}
\Gamma_{B^t}(y,z)\ :=\ \Gamma_{B}(y-te_1,z-te_1)\ =\ \Gamma_B(R_t y, R_t z), 
\end{align}
where $\Gamma_B:=\Gamma_s$ is given in \eqref{PK}.  Then, applying Proposition \ref{disjoint-sets-prop} to the domain $\Omega=\Omega(t)$, we obtain
\begin{align}
G_{\Omega}(x,y) &= G_B(x,y)+\int_{B^t}\Gamma_B({x},z)\,G_{\Omega}({y},z)\;dz&&\text{ for }\  x,y\in B,\ x\neq y, \label{sameball2}\\
G_{\Omega}(x,y) &= \int_{B^t}\Gamma_B({x},z)\,G_{\Omega}(z,{y})\;dz &&\text{ for }\   x\in B,\ y\in B^t%\label{diffball2}.
\nonumber
\end{align}
Since $\Omega$ consists of two balls, using equation \eqref{reflected-pois} we can also write 
\begin{equation}
G_{\Omega}(x,y) = \int_B\Gamma_{B^t}(y,z)\,G_{\Omega}(x,z)\;dz 
=\int_B\Gamma_B(R_ty,R_tz)\,G_{\Omega}(x,z)\;dz\qquad\text{ for }\   x\in B,\ y\in B^t\label{diffball3}.
\end{equation}

\begin{proof}[Proof of Theorem \ref{even:thm2}]  Fix $t>T_0$, with $T_0$ as in \eqref{T0:def}. To simplify the notation, we write $\Omega$ instead of $\Omega(t)$. Fix $x\in B$, $y\in B^t$, and $m$ even. 
Substituting \eqref{sameball2} in the last term of \eqref{diffball3}, using the positivity of $G_B$ we deduce 
\begin{align}\label{meven}
G_\Omega(x,y)&\geq \int_{B}\Gamma_{B}(R_ty,R_tz)\left(\int_{B^t}\Gamma_{B}(z,w)\,G_\Omega(x,w)\;dw\right)dz{,\qquad x\in B,\ y\in B^t}.
\end{align}
Then, an interchange of integrals yields that
\begin{equation*}
G_\Omega(x,y) \geq \int_{B^t}G_\Omega(x,w)K(y,w) \ dw{,\qquad x\in B,\ y\in B^t}.
\end{equation*}
where
\[
K(a,b)\ :=\ \int_{B}\Gamma_{B}(R_ta,R_tz)\,\Gamma_{B}(z,b)\;dz\geq 0\qquad \text{ for } \ a,b\in B^t,
\]
by \eqref{PK}.  Iterating this procedure we have that 
\begin{equation*}
\begin{split}
G_\Omega(x,y) &\geq\ \int_{B^t}\left(\int_{B^t}G_\Omega(x,x^2)\,K(w,x^2)\;dx^2\right)K(y,w)\;dw \\
&= \int_{B^t}G_\Omega(x,x^2)\left(\int_{B^t}K(y,x^1)\,K(x^1,x^2)\;dx^1\right)dx^2
\end{split}
\end{equation*}
and, after $n$ iterations,
\begin{equation}\label{n}
G_\Omega(x,y)\geq \int_{B^t}G_\Omega(x,x^n)\left(\int_{B^t}\ldots\int_{B^t}K(y,x^1)\cdots K(x^{n-1},x^n)\;dx^1\ldots dx^{n-1}\right)dx^n.
\end{equation}
We now estimate $K$. It follows from Lemma \ref{Rt:lem} and the definition of $K$ that, for $a,b\in B^t$,
\begin{align*}
K(a,b)&=\gamma_{N,\sigma}^2\left(\frac{1-|R_ta|^2}{|b|^2-1}\right)^s\int_B\frac{1}{|R_ta-R_tz|^N|b-z|^N}
\left(\frac{1-|z|^2}{|R_tz|^2-1}\right)^sdz \\
&\leq \gamma_{N,\sigma}^2\left(\frac{|a|^2-1}{|b|^2-1}\right)^s\int_B\frac{1}{|a-z|^N|b-z|^N}\;dz\leq \left(\frac{|a|^2-1}{|b|^2-1}\right)^s\frac{|B|\gamma_{N,\sigma}^2}{(t-2)^{2N}}.
\end{align*}
	
Therefore,
\[
\int_{B^t}\ldots\int_{B^t}K(y,x^1)K(x^1,x^2)\cdots K(x^{n-1},x^n)\;dx^1\ldots dx^{n-1}\leq
\Big(\frac{|B|\gamma_{N,\sigma}}{(t-2)^{N}}\Big)^{2(n-1)}\left(\frac{|y|^2-1}{|x^n|^2-1}\right)^s,
\]
and we deduce from \eqref{n} that
\begin{align*}
G_\Omega(x,y)
&\geq -\Big(\frac{|B|\gamma_{N,\sigma}}{(t-2)^{N}}\Big)^{2(n-1)}\int_{B^t}|G_\Omega(x,z)|\left(\frac{|y|^2-1}{|z|^2-1}\right)^sdz\\
&\geq -\Big(\frac{|B|\gamma_{N,\sigma}}{(t-2)^{N}}\Big)^{2(n-1)}\left(\frac{|y|^2-1}{t(t-2)}\right)^s\int_{B^t}|G_\Omega(x,z)|dz\to 0\qquad \text{ as }n \to\infty,
\end{align*}
because $|B|\gamma_{N,\sigma}(t-2)^{-N}<1$, by \eqref{T0:def} and because $t>T_0$.  Since $(x,y)\in B\times B^t$ was taken arbitrarily we have $G_\Omega\geq 0$ in $B\times B^t$. Then \eqref{niBe} follows by symmetry and, by \eqref{sameball}, we also obtain that \eqref{inB} holds if $m$ is even.
	
On the other hand, if $m$ is odd, then instead of \eqref{meven} we have
\begin{align*}
G_\Omega(x,y)&\leq \int_{B}\Gamma_{B}(R_ty,R_tz)\left(\int_{B^t}\Gamma_{B}(z,w)\,G_\Omega(x,w)\;dw\right)dz,
\end{align*}
and \eqref{niBo}, \eqref{inB} follow from a similar reasoning as in the case $m$ even. This ends the proof.
\end{proof}

We conclude this part with the proof of Theorem \ref{connected:cor}.  For this, we recall first a result from \cite{AJS16a}.

\begin{thm}[Theorem 1.1 in \cite{AJS16a}]\label{mp-fails}
Let $N\in\N$, $D\subset\R^N$ be an open set, $s\in(k,k+1)$ for some $k\in\mathbb N$ odd, $A$ be a nonempty ball compactly contained in $\R^N\setminus D$, and $\Omega=D\cup A$. There is a smooth positive function $f\in C^\infty(\overline{\Omega})$ such that the problem $(-\Delta)^{s}u =f$ in $\Omega$, admits a sign-changing weak solution $u\in\cH_0^{s}(\Omega)\cap C(\R^N)\cap C^{\infty}(\Omega)$ with $u\lneq 0$ in $D$ and $u\gneq 0$ in $A$.
\end{thm}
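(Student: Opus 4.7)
The plan is to bootstrap the sign-change on the disjoint domain $\Omega=B_1(0)\cup B_1(3e_1)$ provided by Theorem~\ref{mp-fails} to the connected approximations $\Omega_n$ via a compactness argument letting the tube shrink. Since $m$ is odd and $\sigma\in(0,1)$, one has $s=m+\sigma\in(m,m+1)$; applying Theorem~\ref{mp-fails} with $D:=B_1(0)$ and $A:=B_1(3e_1)$ (which is compactly contained in $\R^N\setminus D$) produces a positive $f\in C^\infty(\overline\Omega)$ and a sign-changing weak solution $u\in\cH_0^s(\Omega)\cap C(\R^N)\cap C^\infty(\Omega)$ of $(-\Delta)^su=f$ in $\Omega$ with $u\lneq 0$ in $D$ and $u\gneq 0$ in $A$. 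I then extend $f$ to a nonnegative $\tilde f\in C^\infty_c(\R^N)$ and set $f_n:=\tilde f|_{\Omega_n}\in L^\infty(\Omega_n)$, which will serve as the right-hand sides on the fattened domains.

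For each $n$, Lax--Milgram (using coercivity of $\cE_s$ on the bounded set $\Omega_n\subset\Omega_1$, which follows from the fractional Poincar\'e inequality) produces a unique weak solution $u_n\in\cH_0^s(\Omega_n)\subset\cH_0^s(\Omega_1)$ of $(-\Delta)^su_n=f_n$ in $\Omega_n$. Testing the weak formulation against $u_n$ and using Cauchy--Schwarz with Poincar\'e yields the uniform bound $\|u_n\|_{\cH_0^s(\Omega_1)}\le C$; hence, along a subsequence, $u_n\rightharpoonup u^\star$ weakly in $H^s(\R^N)$ and strongly in $L^2(\R^N)$ by the compact embedding. Since $\Omega_{n+1}\subset\Omega_n$ and $\bigcap_n\Omega_n=\Omega\cup L$ with $|L|=0$ (because $N\ge 2$ and $L$ is a one-dimensional segment), and each $u_n$ vanishes a.e.\ on $\R^N\setminus\Omega_n$, the $L^2$-limit $u^\star$ vanishes a.e.\ on $\R^N\setminus\Omega$, so $u^\star\in\cH_0^s(\Omega)$. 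For any $\varphi\in C^\infty_c(\Omega)\subset C^\infty_c(\Omega_n)$ one has $\cE_s(u_n,\varphi)=\int_\Omega f\varphi$ for every $n$; weak convergence together with the continuity of $\cE_s$ on $H^s(\R^N)$ lets me pass to the limit to obtain $\cE_s(u^\star,\varphi)=\int_\Omega f\varphi$. Thus $u^\star$ solves weakly the same problem as $u$ on $\Omega$, and uniqueness \cite[Corollary~3.6]{AJS16a} forces $u^\star=u$.

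The strong $L^1(\R^N)$ convergence (a consequence of $L^2$ convergence on a bounded carrier) implies $\int u_n^-\to\int u^->0$ and $\int u_n^+\to\int u^+>0$, so for $n$ large enough both integrals remain strictly positive, giving $\operatorname{essinf}_{\Omega_n}u_n<0$ and $\operatorname{esssup}_{\Omega_n}u_n>0$ as required. I expect the main obstacle to be the identification $u^\star\in\cH_0^s(\Omega)$ rather than merely in $\cH_0^s(\Omega\cup L)$: this is where the hypothesis $N\ge 2$ is essential, since it guarantees $|L|=0$, so "vanishing a.e.\ outside $\Omega\cup L$" coincides with "vanishing a.e.\ outside $\Omega$" in the definition of $\cH_0^s(\Omega)$ recalled in Subsection~\ref{Notation}. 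A minor additional subtlety is that the $\Omega_n$ are merely Lipschitz at the junctions between the balls and the tube; this regularity is sufficient for the weak-solution framework invoked, but if needed one can smoothen the $\Omega_n$ without affecting any step of the argument.
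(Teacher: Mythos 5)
Your proposal does not prove the statement it is supposed to prove. You were asked to prove Theorem~\ref{mp-fails}, which asserts the existence of a positive $f$ for which $(-\Delta)^s u=f$ has a sign-changing weak solution on the \emph{disjoint} domain $\Omega=D\cup A$. Instead, your argument \emph{invokes} Theorem~\ref{mp-fails} as a black box in its very first step (``applying Theorem~\ref{mp-fails} with $D:=B_1(0)$ and $A:=B_1(3e_1)$ produces a positive $f$ and a sign-changing weak solution $u$\ldots''), which is circular if the goal is to establish Theorem~\ref{mp-fails} itself, and then transfers the sign change to the connected approximations $\Omega_n$ by a weak-compactness and uniqueness argument. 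What you have actually written out is a proof of Theorem~\ref{connected:cor}, not of Theorem~\ref{mp-fails}. Note also that within the present paper Theorem~\ref{mp-fails} carries no proof at all: it is a verbatim recall of Theorem~1.1 of \cite{AJS16a} and is used here only as an input to the proof of Theorem~\ref{connected:cor}; the construction behind it (an explicit sign-changing example built from Green/Poisson kernels and the disjointness of $D$ and $A$, exploiting the sign $(-1)^m$ in $\Gamma_s$) lives in the other paper.

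For what it is worth, as a proof of Theorem~\ref{connected:cor} your argument is correct and essentially identical to the one given in Section~\ref{sectionproof1}: uniform $\cH_0^s(\Omega_1)$-bound via Cauchy--Schwarz and Poincar\'e, weak compactness plus strong $L^2$-convergence, identification of the limit $u^\star$ with $u$ using $|L|=0$ (hence the need for $N\ge2$) and uniqueness of weak solutions, and then reading off the sign change. The only cosmetic departures are that the paper takes $f_n:=f\chi_\Omega$ rather than restricting a smooth extension $\tilde f$ (both yield $\int_{\Omega_n}f_n\varphi=\int_\Omega f\varphi$ for $\varphi$ supported in $\Omega$), and that the paper concludes via a.e.\ pointwise convergence and continuity of $u$ rather than via $L^1$-convergence of $u_n^\pm$; these are interchangeable. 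If the intended target was indeed Theorem~\ref{mp-fails}, you would need to produce a construction from scratch, not a reduction to a connected-domain variant.
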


\begin{proof}[Proof of Theorem \ref{connected:cor}]
Let $\Omega_n$ as in the statement, $f\in C^\infty(\overline{\Omega})$, and $u\in\cH_0^{s}(\Omega)\cap C(\R^N)$ be the functions given by Theorem \ref{mp-fails} for $\Omega=D\cup A$ with $D=B_1(0)$ and $A=B_1(3e_1)$. For $n\in\N$ let $f_{n}\in L^\infty(\Omega_{n})$ be given by $f_{n}:=f \chi_\Omega \gneq 0$, where $\chi_\Omega$ is the characteristic function of the set $\Omega$. Let $u_{n}\in\cH_0^{s}(\Omega_{n})$ be the weak solution of $(-\Delta)^{s}u_{n}=f_{n}$ in $\Omega_{n}$ given by \cite[Corollary 3.6]{AJS16a}, \emph{i.e.}, 
\begin{align}\label{weak:for}
\cE_s(u_{n},\varphi)=\int_{\Omega_{n}}f_{n}\varphi \ dx=\int_{\Omega}f \varphi \ dx\qquad \text{ for all }\varphi\in\cH_0^{s}(\Omega_{n}).  
\end{align}
For $v\in\cH_0^s(\Omega_{1})$, let $\|v\|:=\cE_s(v,v)^{\frac{1}{2}}$.  By the Poincar\'{e} inequality \cite[Proposition 3.3]{AJS16a} we have that $\|\cdot\|$ is a norm and $(\cH_0^s(\Omega_{1}),\cE_s(\cdot,\cdot))$ is a Hilbert space.  By testing \eqref{weak:for} with $u_{n}$ and using Cauchy-Schwarz inequality, Poincar\'{e} inequality (see \cite[Proposition 3.3]{AJS16a}), and $\cH_0^s(\Omega_{n})\subset \cH_0^s(\Omega_{1})$, we have that 
\begin{align*}
\|u_{n}\|^2=\cE_s(u_{n},u_{n}) \leq \|u_n\|_{L^2(\Omega_1)}\|f\|_{L^2(\Omega)}\leq \lambda^{-1}_{1,s}(\Omega_{1}) \|u_{n}\|\|f\|_{L^2(\Omega)}.
\end{align*}
Therefore $(u_{n})_{n\in\N}$ is uniformly bounded in $\cH_0^s(\Omega_{1})$ and then there is $u^*\in \cH_0^s(\Omega_{1})$ such that $u_{n}\rightharpoonup u^*$ weakly in $\cH_0^s(\Omega_{1})$ and $u_{n}\to u^*$ strongly in $L^2(\R^N)$ as $n\to \infty$. Since $\operatorname{supp} u_{n}\subset \Omega_{n}$ we have that $\operatorname{supp} u^*\subset \overline{\Omega}\cup L$ and we may assume without loss of generality that $u^*=0$ on $L$, because $L$ has measure zero (since $N\geq 2$). Thus $u^*\in \cH_0^{s}(\Omega)$ and for any $\varphi\in\cH_0^{s}(\Omega)\subset \cH_0^s(\Omega_{1})$,
\begin{align*}
\cE_s(u^*,\varphi)=\lim_{n\to\infty}\cE_s(u_{n},\varphi) =\int_{\Omega}f\varphi \ dx.  
\end{align*}
By the uniqueness of solutions given by \cite[Corollary 3.6]{AJS16a}, we have that $u^*=u$ a.e. in $\Omega$. The result now follows, since $u_{n}(x)\to u^*(x)=u(x)$ for a.e. $x\in\Omega$ and $u\in C(\R^N)$ satisfies that $u\lneq 0$ in $B_1(0)$ and $u\gneq 0$ in $B_1(3e_1)$, by Theorem \ref{mp-fails}. 
\end{proof}

\section{Representation formulas and explicit solutions with boundary kernels}\label{IRF:sec}

We begin this section with the study of $s$-harmonic functions which are zero in $\R^N\backslash\overline{B}$.
Recall from \eqref{dn-vs-partial} that $\delta(x):=(1-|x|^2)_+$ for $x\in\R^N$ and, for $k\in\N_{0}$, $\sigma\in(0,1]$
\begin{equation*}
\dn^{k+\sigma-1}u(z)=\frac{(-1)^k}{k!}\lim_{x\to z}\frac{\partial^k}{\partial (|x|^2)^k}[\delta(x)^{1-\sigma} u(x)]\ \  \text{ and }\ \ \dn^{k}u(z)=\frac{(-1)^k}{k!}\lim_{x\to z}\frac{\partial^k}{\partial (|x|^2)^k}u(x).
\end{equation*}

\begin{thm}\label{thm:sHTOD}
Let $m\in\N_0$, $\sigma\in(0,1]$, $s=m+\sigma$, $g_k\in C^{m-k{,0}}(\partial B)$ for $k=0,\ldots,m$. Let $u:\R^N\to\R$ be given by 
\begin{align}\label{sEK:u}
 u(x)=\sum_{k=0}^{m}\ \int_{\partial B}E_{k,s}(x,\theta)\ g_k(\theta)\ d\theta,
\end{align}
where $E_{k,s}$ are given by \eqref{sEK}. Then $u\in C^{\infty}(B)$ with $\delta^{1-\sigma}u\in C^{m{,0}}(\overline{B})$ is a solution of $(-\Delta)^s u=0$ in $B$ satisfying $u=0$ on $\mathbb R^N\backslash\overline{B}$ and
\begin{align}
\dn^{k+\sigma-1} u= g_k\quad \text{ on }\ \ \partial B\qquad \text{ for }\ \ k=0,1,\ldots,m. \label{sE:p}
\end{align}
Furthermore, if $t\in (0,\infty)\backslash \N$ and $g_k\in C^{m+t-k}(\partial B)$ for $k=0,\ldots,m$, then $\delta^{1-\sigma}u\in C^{m+t}(\overline{B})$.
\end{thm}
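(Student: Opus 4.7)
The plan is to reduce to the classical polyharmonic case through the scaling relation $E_{k,s}=\delta^{\sigma-1}E_{k,m+1}$, where $E_{k,m+1}$ are Edenhofer's original kernels for the polyharmonic Dirichlet problem on $B$. Indeed, from \eqref{sEK},
\begin{equation*}
E_{k,s}(x,\theta)=\frac{1}{\omega_N}\delta(x)^{s}\,\dn^{m-k}\zeta_x(\theta)=\delta(x)^{\sigma-1}E_{k,m+1}(x,\theta),\qquad x\in B,\ \theta\in\partial B,
\end{equation*}
so that if I set $v(x):=\sum_{k=0}^m\int_{\partial B}E_{k,m+1}(x,\theta)\,g_k(\theta)\,d\theta$, then $u(x)=\delta(x)^{\sigma-1}v(x)$ for $x\in B$ and $u\equiv 0$ on $\R^N\setminus\overline B$ by the factor $(1-|x|^2)_+^s$.

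Next I would invoke the classical Edenhofer theorem \cite{E75-1,E75-2}: for $g_k\in C^{m-k,0}(\partial B)$, the function $v$ lies in $C^\infty(B)\cap C^{m,0}(\overline B)$ and is the unique solution of the polyharmonic Dirichlet problem $(-\Delta)^{m+1}v=0$ in $B$ with $\dn^k v=g_k$ on $\partial B$ for $k=0,\dots,m$; under the stronger assumption $g_k\in C^{m+t-k}(\partial B)$ with $t\in(0,\infty)\setminus\N$ one has in addition $v\in C^{m+t}(\overline B)$. This immediately gives $u\in C^\infty(B)$, $\delta^{1-\sigma}u=v\in C^{m,0}(\overline B)$ (and the finer statement $\delta^{1-\sigma}u\in C^{m+t}(\overline B)$ in the regular case), together with $u=0$ on $\R^N\setminus\overline B$. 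For the boundary traces, \eqref{dn-vs-partial} combined with the cancellation of the weights yields
\begin{equation*}
\dn^{k+\sigma-1}u(z)=\frac{(-1)^k}{k!}\lim_{x\to z}\frac{\partial^k}{\partial(|x|^2)^k}\bigl[\delta(x)^{1-\sigma}\delta(x)^{\sigma-1}v(x)\bigr]=\dn^kv(z)=g_k(z),
\end{equation*}
which is exactly \eqref{sE:p}.

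For the identity $(-\Delta)^su=0$ in $B$, rather than invoking Corollary \ref{Almansi:cor} (whose hypothesis $\delta^{1-\sigma}u\in C^{m+\alpha}(\overline B)$ is not a priori met under the minimal assumption $g_k\in C^{m-k,0}$), I would decompose $v$ using the Almansi-type formula of Lemma \ref{Almansi:l}. Since $v\in C^{2m+2,0}(B)\cap C^{m,0}(\overline B)$ satisfies $\Delta^{m+1}v=0$, the lemma produces unique $h_k\in C(\partial B)$ such that
\begin{equation*}
v(x)=\sum_{k=0}^m m_{k+1}\int_{\partial B}M_{k+1}(x,\theta)\,h_k(\theta)\,d\theta,\qquad x\in B.
\end{equation*}
Multiplying by $\delta(x)^{\sigma-1}$ and using \eqref{martin}, one sees that $\delta(x)^{\sigma-1}M_{k+1}(x,\theta)$ is a fixed constant multiple of $M_{k+\sigma}(x,\theta)$; hence each summand of $u$ is an integral against $M_{k+\sigma}$, which by Proposition \ref{lem:sharm-intro} is pointwise $(k+\sigma)$-harmonic in $B$. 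Since $(-\Delta)^s=(-\Delta)^{m-k}(-\Delta)^{k+\sigma}$ pointwise, each such summand is also $s$-harmonic, and so is $u$ by linearity of the finite sum.

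The main step to execute with care is the Almansi-to-Martin passage: one has to chase the normalization constants between $M_{k+1}$ and $M_{k+\sigma}$ coming from \eqref{martin}--\eqref{c}, and to confirm that the pointwise identity $(-\Delta)^s=(-\Delta)^{m-k}(-\Delta)^{k+\sigma}$ can legitimately be applied to each Martin summand (which is guaranteed by the $\cL^1_\sigma$-regularity and smoothness provided by Proposition \ref{lem:sharm-intro}). Beyond this, every remaining assertion of the theorem is a direct consequence of pieces already assembled in Section~\ref{Auxiliary:sec}, and the higher-regularity statement requires no extra work beyond Edenhofer's sharper estimate.
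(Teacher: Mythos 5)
Your proposal follows the same route as the paper's own proof: reduce to the polyharmonic solution $v$ of Edenhofer \cite{E75-1,E75-2} via $E_{k,s}=\delta^{\sigma-1}E_{k,m+1}$, deduce the regularity of $\delta^{1-\sigma}u=v$ and the boundary traces $\dn^{k+\sigma-1}u=\dn^k v=g_k$, decompose $v$ by the Almansi-type Lemma~\ref{Almansi:l} into integrals against $M_{k+1}$, pass to $M_{k+\sigma}$ after multiplying by $\delta^{\sigma-1}$, invoke Proposition~\ref{lem:sharm-intro}, and obtain the sharper $C^{m+t}(\overline B)$ regularity from Schauder theory. Your observation that each Martin summand is a priori only $(k+\sigma)$-harmonic and that the step $(-\Delta)^{m+\sigma}=(-\Delta)^{m-k}(-\Delta)^{k+\sigma}$ is needed to upgrade this to $s$-harmonicity is a correct and slightly more explicit rendering of what the paper compresses into a single invocation of Proposition~\ref{lem:sharm-intro}.
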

\begin{proof}
 Let $g_k\in C^{m-k{,0}}(\partial B)$ for $k=0,\ldots,m$ and $v$ as in \eqref{sEK:u} with $s=m+1$. By \cite[Satz 2 \& 3]{E75-2}, we have that $v\in C^{\infty}(B)\cap C^{m+1{,0}}(\overline{B})$ is the unique solution of
 \begin{align}\label{E:p}
(-\Delta)^{m+1} v=0\ \ \text{ in }B\quad \text{ and }\quad 
\dn^k v= g_k\ \ \text{ on }\ \ \partial B\quad \text{ for }\ \ k=0,1,\ldots,m.
\end{align}
Let $u:=\delta^{\sigma-1} v$. Then, by Lemma \ref{Almansi:l} applied to $v$, there are unique functions $h_k\in C(\partial B)$ for $k=0,\ldots,m$, such that 
 $u(x)=\sum_{k=0}^m\ \int_{\partial B} M_{k+\sigma}(x,\theta)\ h_k(\theta)\ d\theta$.  Therefore $u$ is $(m+\sigma)$-harmonic in $B$ by Proposition \ref{lem:sharm-intro}. Moreover, by \eqref{E:p},
 $\dn^{k+\sigma-1}u=\dn^k[\delta^{1-\sigma}u]=\dn^kv=g_k$ on $\partial B$ for $k=0,\ldots,m$, and the boundary conditions \eqref{sE:p} follow. Finally, let $t\in (0,\infty)\backslash \N$ and $g_k\in C^{m+t-k}(\partial B)$ for $k=0,\ldots,m$. By Schauder theory (see \cite[Theorem 2.19]{GGS10} or \cite[Theorem 9.3]{ADN59}), the problem \eqref{E:p} has a unique (mild) solution in $C^{m+t}(\overline{B})$, which then must be given by $v$, and therefore $\delta^{1-\sigma} u=v\in C^{m+t}(\overline{B})$.
\end{proof}

\begin{remark}\label{noconstants}
 In Edenhofer's original formulation \cite{E75-2} the kernels are 
\begin{align}\label{mc}
E_{k,m}(x,\theta)=\frac{(-1)^{m-1}}{\omega_N(m-1)!}(1-|x|^2)^m\int_{\partial B}\binom{m-1}{k}\varphi_k(\theta) 
\frac{\partial^{m-1-k}}{\partial(|y|^2)^{m-1-k}}\Big( \frac{|y|^{N-2}}{|x-y|^N} \Big)\Bigg|_{y=\theta}\ d\theta,
\end{align}
where $\varphi_k(\theta)=\lim\limits_{y\to\theta}\frac{\partial^{k}}{\partial(|y|^2)^{k}}u(y)$. Due to our normalization constants in the definition of $\dn^{t}$, formula \eqref{mc} is equivalent to \eqref{sEK}, using the prescribed functions $g_k=\dn^k u$.  We also remark that the assumption $g_k\in C^{m-k{,0}}(\partial B)$ from Theorem \ref{main:thm} is not technical, if $g_k$ are merely continuous functions, then the Dirichlet (polyharmonic) problem may not have a solution, see \cite{E75-1}.

\end{remark}

\begin{remark}\label{K:Equiv}
In some contexts, the use of normal derivatives $(-\partial_\nu)^k$ may be more natural than the use of $(\frac{\partial}{\partial |x|^2})^k$.  In Subsection \ref{remarks-derivative} we mentioned that these two notions of derivatives are different, but equivalent. To exemplify this, let 
\begin{align*}
T_{k+\sigma-1} u(z):=\lim_{x\to z}(-\partial_\nu)^k(\delta^{1-\sigma}(x) u(x))\qquad \text{ for }z\in\partial B.
\end{align*}
We recall that all limits are meant in the normal direction from inside $B$.  We show how Edenhofer kernels $E_{k,s}$ can be used to construct a function $u$ satisfying 
\begin{align*}
(-\Delta)^s u(x)=(-\Delta)^m (-\Delta)^\sigma u(x)=0\qquad \text{ for }x\in B
\end{align*}
with the boundary conditions
\begin{align}
T_{k+\sigma-1} u=0\quad \text{ and } \quad T_{(m-1)+\sigma-1} u(z)=\varphi\qquad \text{ at }\partial B \label{nu:der}
\end{align}
for $k\in\{0,\ldots,m\}\backslash \{m-1\}$ and for some given $\varphi\in C^{1{,0}}(\partial B)$.  By \eqref{nu:der} and some elementary calculations (recall \eqref{page10}), we have, for $z\in\partial B$, that
\begin{align*}
\dn^{k+\sigma-1} u(z)&=0\qquad \text{ for }k\leq m-3,\\
\dn^{(m-1)+\sigma-1} u(z)&=\frac{1}{(m-1)!} \lim_{x\to z}\frac{(-\partial_\nu)^{m-1}}{2^{m-1}} (\delta^{1-\sigma}(x) u(x)) =\frac{\varphi(z)}{2^{m-1}(m-1)!},\\
\dn^{m+\sigma-1} u(z)&=\frac{1}{m!} \lim_{x\to z}\sum_{j=0}^{m}\frac{j}{2^m }(-\partial_\nu)^{m-1} (\delta^{1-\sigma}(x) u(x))=\frac{m+1}{2^{m+1}(m-1)!}\varphi(z).
\end{align*}
Moreover, for $x\in B$ and $\theta\in\partial B$,
\begin{align}\label{expE}
E_{m,s}(x,\theta)&=\frac{1}{ \omega_N }\frac{\delta(x)^s}{|x-\theta|^N}\quad\text{ and }\quad E_{m-1,s}(x,\theta)=\frac{\delta(x)^s(N\delta(x)-(N-4)|x-\theta|^2)}{4\omega_N \ |x-\theta|^{N+2}}.
\end{align}
Therefore,
\begin{align*}
u(x)&=\int_{\partial B}E_{m-1,s}(x,\theta)\dn^{s-2} u(\theta)+E_{m,s}(x,\theta)\dn^{s-1} u(\theta)\ d\theta\\
&=\frac{1}{2^{m+1}(m-1)!\omega_N}\int_{\partial B} \frac{\delta(x)^{s}}{|x-\theta|^{N+2}}(N\delta(x)-(N-3-m)|x-\theta|^2)\varphi(\theta)\ d\theta.
\end{align*} 
If $\sigma=1$, this coincides with the Dirichlet $(m+1)$-harmonic case, see \cite[page 160]{GGS10}.  Similarly, the solution of $(-\Delta)^s u=0$ in $B$ with 
$T_{k+\sigma-1} u=0$ at $\partial B$ for $k\leq m-1$ and $T_{m+\sigma-1} u=\varphi$ at $\partial B$ for a given $\varphi\in C(\partial B)$ is
\begin{align*}
 u(x)&=\int_{\partial B} E_{m,s}(x,\theta)\dn^{s-1} u(\theta)\ d\theta=\frac{1}{ 2^{m}m!\omega_N}\int_{\partial B}\frac{\delta(x)^{s}}{|x-\theta|^N}\varphi(z)\ d\theta.
\end{align*}
\end{remark}

Our next result provides a representation formula for $s$-harmonic functions which are zero in $\R^N\backslash\overline{B}$.

\begin{thm}\label{u:thm}
Let $m\in\N_0$, $\sigma, \alpha\in(0,1 ]$, $s=m+\sigma$ and $u\in C^{2s+\alpha}(B)$ such that
\begin{align*}
\delta^{1-\sigma}u\in C^{ m+\alpha}(\overline{B}),\qquad  u=0\ \ \text{ on }\mathbb R^N\backslash\overline{B},\qquad \text{ and }\qquad (-\Delta)^s u=0\ \  \text{  in }B.
\end{align*}
Then $u$ satisfies \eqref{sEK:u} with $g_k:=\dn^{k+\sigma-1} u$ on $\partial B$ for $k=0,1,\ldots,m.$
\end{thm}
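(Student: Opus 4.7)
The plan is to subtract off an Edenhofer-type representation with matching boundary data and to invoke the uniqueness of $s$-harmonic functions in $\cH_0^s(B)$.

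I would start by setting $g_k:=\dn^{k+\sigma-1} u$ for $k=0,\ldots,m$. From the hypothesis $\delta^{1-\sigma}u\in C^{m+\alpha}(\overline{B})$ together with the very definition \eqref{dn-vs-partial}, which expresses $\dn^{k+\sigma-1}u$ as the boundary value of the $k$-th derivative (in $|x|^2$) of $\delta^{1-\sigma}u$, these traces satisfy $g_k\in C^{m-k+\alpha}(\partial B)$; in particular $g_k\in C^{m-k,0}(\partial B)$, which meets the assumption of Theorem~\ref{thm:sHTOD}. I would then define
\[
v(x):=\sum_{k=0}^m \int_{\partial B} E_{k,s}(x,\theta)\, g_k(\theta)\, d\theta,\qquad x\in B,
\]
extended by $0$ on $\R^N\setminus\overline{B}$. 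Theorem~\ref{thm:sHTOD} (combining its main part with the refined regularity statement applied with $t=\alpha\in(0,1)\setminus\N$) then yields $v\in C^\infty(B)$ with $\delta^{1-\sigma}v\in C^{m+\alpha}(\overline{B})$, $(-\Delta)^s v=0$ in $B$, $v=0$ on $\R^N\setminus\overline{B}$, and $\dn^{k+\sigma-1} v=g_k$ on $\partial B$ for every $k=0,\ldots,m$.

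Next I would form $w:=u-v$. By construction, $w$ satisfies exactly the hypotheses of Lemma~\ref{u:2:l}: $w\in C^{2s+\alpha}(B)$, $\delta^{1-\sigma}w\in C^{m+\alpha}(\overline{B})$, $w=0$ on $\R^N\setminus\overline{B}$, $\dn^{k+\sigma-1}w=0$ on $\partial B$ for every $k=0,\ldots,m$, and $(-\Delta)^s w=0$ in $B$ (in particular trivially in $C^{1-\sigma+\alpha}(\overline{B})$). Lemma~\ref{u:2:l} then forces $w\equiv 0$ in $\R^N$, which gives $u=v$ and hence the desired representation.

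The residual task is to cover the borderline cases excluded by Lemma~\ref{u:2:l}, namely $\alpha=1$ or $\sigma=1$. For $\alpha=1$ (with $\sigma<1$) all hypotheses still hold with a strictly smaller exponent $\alpha'\in(0,1)$, since $C^{m+1}(\overline{B})\hookrightarrow C^{m+\alpha'}(\overline{B})$ and similarly for $C^{2s+1}(B)$, so the argument above applies with $\alpha$ replaced by $\alpha'$. For $\sigma=1$ one has $s=m+1\in\N$, $\delta^{1-\sigma}\equiv 1$, and $\dn^{k+\sigma-1}$ reduces to $\dn^k$, so the statement collapses to the classical polyharmonic representation of Edenhofer~\cite{E75-2}, which is already the backbone of the proof of Theorem~\ref{thm:sHTOD}. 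I expect the main subtle point to be precisely the verification that $v$ inherits the \emph{full} regularity $\delta^{1-\sigma}v\in C^{m+\alpha}(\overline{B})$ required to feed into Lemma~\ref{u:2:l}; once this is in place, everything else is routine given the preparatory results.
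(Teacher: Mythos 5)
Your proof is correct and takes essentially the same route as the paper's: construct $v$ via the Edenhofer kernels with $g_k=\dn^{k+\sigma-1}u$, use Theorem~\ref{thm:sHTOD} for the properties of $v$, set $w=u-v$, and conclude $w\equiv 0$ from Lemma~\ref{u:2:l}. You additionally handle the borderline cases $\alpha=1$ and $\sigma=1$, which are formally excluded from the hypotheses of Lemma~\ref{u:2:l} even though the theorem allows $\sigma,\alpha\in(0,1]$; the paper's two-line proof does not spell this out, so this extra care is a genuine (if minor) improvement.
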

\begin{proof}
Let $u$ as in the statement, $v$ as in \eqref{sEK:u} with $g_k:=\dn^{k+\sigma-1} u$ on $\partial B$ for $k=0,1,\ldots,m$, and set $w:=u-v$.  By Theorem \ref{thm:sHTOD}, we have that $\delta^{1-\sigma}w\in C^{ m+\alpha}(\overline{B})$ and $\dn^{k+\sigma-1}w=0$ at $\partial B$ for $k=0,\ldots,m$, which yields that $w\equiv 0$ in $\R^N$, by Lemma \ref{u:2:l}.
\end{proof}

As a consequence of Theorem \ref{u:thm}, we can infer the following relationship between the harmonic extensions $\cH_s$ and $\cH_\sigma$.

\begin{cor}\label{I-2}
Let $m\in \N$, $\sigma\in(0,1)$, and $s=m+\sigma$. Moreover, let $g\in \cL^1_{\sigma}$ such that $g=0$ in $B_{r}$ for some $r>1$. Then
\begin{equation}\label{I}
\cH_s g(x)= \cH_\sigma g(x) -\sum_{k=0}^m \int_{\partial B} E_{k,s}(x,\theta)\dn^{k+\sigma-1}\cH_\sigma g(\theta)\ d\theta\qquad \text{ for }x\in B, 
\end{equation}
where $\cH_{\sigma}g, \cH_sg$ are as in \eqref{GHHs}.
\end{cor}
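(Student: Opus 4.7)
The plan is to apply Theorem \ref{u:thm} to the auxiliary function $u := \cH_s g - \cH_\sigma g$. First I would check that $u$ solves $(-\Delta)^s u = 0$ in $B$ and that $u \equiv 0$ on $\R^N \setminus \overline{B}$. The vanishing outside is immediate from the definitions, since both extensions agree with $g$ there. By Theorem \ref{poisson:thm}, $\cH_s g$ is pointwise $s$-harmonic in $B$. For $\cH_\sigma g$, the same theorem applied with $m=0$ gives $(-\Delta)^\sigma \cH_\sigma g \equiv 0$ in $B$ pointwise; since $\cH_\sigma g \in C^\infty(B)$, applying $(-\Delta)^m$ to the identically zero function yields $(-\Delta)^s \cH_\sigma g = (-\Delta)^m (-\Delta)^\sigma \cH_\sigma g = 0$ in $B$. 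Hence $u$ is $s$-harmonic in $B$.

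Next I would verify the regularity hypotheses of Theorem \ref{u:thm}. Using \eqref{u:dec}, write $\cH_s g(x) = \delta(x)^s \phi_s(x)$ for $x \in B_r$, where
\begin{equation*}
\phi_s(x) = (-1)^m \gamma_{N,\sigma} \int_{\R^N \setminus \overline{B}_r} \frac{g(y)}{|x-y|^N(|y|^2-1)^s}\,dy.
\end{equation*}
Because $|x-y| \geq r-1 > 0$ for $x \in \overline{B}$ and $y \in \R^N \setminus \overline{B}_r$, the integrand is smooth in $x$ and one obtains $\phi_s \in C^\infty(\overline{B})$. Consequently $\delta^{1-\sigma} \cH_s g = \delta^{m+1} \phi_s \in C^\infty(\overline{B})$, and the identical argument with $m$ replaced by $0$ gives $\delta^{1-\sigma} \cH_\sigma g = \delta\, \phi_\sigma \in C^\infty(\overline{B})$. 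Subtracting, $\delta^{1-\sigma} u \in C^\infty(\overline{B}) \subset C^{m+\alpha}(\overline{B})$ for any $\alpha \in (0,1)$, and $u \in C^\infty(B) \subset C^{2s+\alpha}(B)$, so all hypotheses of Theorem \ref{u:thm} are met.

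The key observation is that $\dn^{k+\sigma-1} \cH_s g = 0$ on $\partial B$ for every $k = 0,1,\ldots,m$. Indeed, $\delta^{1-\sigma} \cH_s g(x) = (1-|x|^2)^{m+1} \phi_s(x)$ vanishes to order $m+1$ in the variable $|x|^2$ at $|x|^2 = 1$, so by Leibniz's rule every derivative of order at most $m$ with respect to $|x|^2$ vanishes at $\partial B$. Therefore $\dn^{k+\sigma-1} u = -\dn^{k+\sigma-1} \cH_\sigma g$ on $\partial B$ for $k = 0,\ldots,m$, and Theorem \ref{u:thm} applied to $u$ yields
\begin{equation*}
\cH_s g(x) - \cH_\sigma g(x) = u(x) = -\sum_{k=0}^{m} \int_{\partial B} E_{k,s}(x,\theta)\, \dn^{k+\sigma-1} \cH_\sigma g(\theta)\, d\theta,
\end{equation*}
which is precisely \eqref{I}. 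The only genuine point to check is the vanishing of the traces of $\cH_s g$, and this reduces to the transparent polynomial vanishing afforded by the explicit form \eqref{PK} of the Poisson kernel.
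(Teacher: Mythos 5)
Your proof is correct and follows essentially the same route as the paper's: both set $w := \cH_s g - \cH_\sigma g$, verify $\delta^{1-\sigma}w \in C^\infty(\overline{B})$ and the vanishing of the traces $\dn^{k+\sigma-1}\cH_s g$ on $\partial B$, and then invoke Theorem~\ref{u:thm}. The only (minor) differences are that you derive the trace vanishing directly from the factored form $\delta^{1-\sigma}\cH_s g = \delta^{m+1}\phi_s$ rather than citing Lemma~\ref{prop:de}, and you spell out the $s$-harmonicity of $w$ (which the paper leaves implicit); both choices are sound and not materially different.
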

\begin{proof}
Let $g$ as in the statement and denote by $w:=\cH_s g-\cH_\sigma g$, then, using Lemma~\ref{prop:de},
 $\dn^{k+\sigma-1} \cH_s g= 0$ on $\partial B$ for $k=0,\ldots,m.$  Moreover, $\delta^{1-\sigma} w\in C^{\infty}(\overline{B})$, since
\begin{align*}
 \delta^{1-\sigma}(x) w(x)=\gamma_{N,\sigma}\int_{\R^N\backslash {B_{r}}} \Bigg((-1)^m\frac{\delta(x)^{m+1}}{(|y|^2-1)^s}-\frac{\delta(x)}{(|y|^2-1)^\sigma}\Bigg) \frac{g(y)}{|x-y|^N}\ dy\qquad \text{ for }x\in B
\end{align*}
and this integral has no singularity. Theorem \ref{u:thm} applied to $w$ yields \eqref{I}.
\end{proof}

We can now proceed to the proof of our main results.

\begin{proof}[Proof of Theorem \ref{main:thm}]
Let $\cG_s u$ and $\cH_s u$ as in \eqref{GHHs} and $f,h$ as in the statement. Observe that 
\begin{align*}
\dn^{k+\sigma-1} \int_B G_s(\cdot,y)f(y)\ dy=\dn^{k+\sigma-1}\int_{\R^N\backslash\overline{B}}\Gamma_s(\cdot,y)h(y)\ dy =0\quad \text{ on } \partial B\quad \text{ for }k=0,1,\ldots,m, 
\end{align*}
by Lemma \ref{prop:de} and the explicit formulas for $G_s$ and $\Gamma_s$, see \eqref{green} and \eqref{PK}. By \cite[Theorem 1.1]{AJS16b} (or \cite[Theorem 1.4]{AJS16}), $G_s$ is a Green function for $(-\Delta)^s$ in $B$, and therefore the result follows from Theorem \ref{thm:sHTOD} and Theorem \ref{poisson:thm}.  
\end{proof}

\begin{proof}[Proof of Theorem \ref{uniqueness:thm}]
Let $u$ as in the statement, $\cG_s u$, $\cH_s u$ as in \eqref{GHHs}, and $v$ given by \eqref{sEK:u} with $g_k:=\dn^{k+\sigma-1} u$ on $\partial B$ for $k=0,1,\ldots,m$. Observe that $\dn^{k+\sigma-1} \cG_s u=0$ and $\dn^{k+\sigma-1} \cH_s u=0$ on $\partial B$ for $k=0,1,\ldots,m$. Then, by Theorem \ref{poisson:thm}, \ref{u:thm}, and the fact that $G_s$ is a Green function for $(-\Delta)^s$ in $B$ (see \cite[Theorem 1.1]{AJS16b} or \cite[Theorem 1.4]{AJS16}), it follows that $v= u - \cG_s u -  \cH_s u$ in $B$, as claimed.
\end{proof}

\begin{proof}[Proof of Theorem \ref{uniqueness:thm:2}]
By Lemma \ref{dnG:l} we may apply Theorem \ref{uniqueness:thm} to $w=u-\cH_\sigma u$, and the result follows.
\end{proof}

\begin{proof}[Proof of Corollary \ref{Almansi:cor}]
Let $\alpha,\sigma\in(0,1]$, $m\in\N$, $s=m+\sigma$, and $u\in C^{2s+\alpha}(B)$ be such that $(-\Delta)^s u=0$ in $B$, $\delta^{1-\sigma}u\in C^{ m+\alpha}(\overline{B})$, and $u=0$ in $\R^N\backslash\overline{B}$.  Then, for $x\in B$,
\begin{align*}
u(x)=\sum_{k=0}^{m}\ \int_{\partial B}E_{k,s}(x,\theta)\ g_k(\theta)\ d\theta
=\delta^{1-\sigma}(x)\varphi(x),\quad \varphi(x):=\sum_{k=0}^{m}\ \int_{\partial B}E_{k,m+1}(x,\theta)\ h_k(\theta)\ d\theta,
\end{align*}
with $h_k:=\dn^{k+\sigma-1} u$ at $\partial B$ for $k=0,1,\ldots,m,$ by Theorem \ref{u:thm} and \eqref{sEK}; in particular, $u\in C^\infty(B)$, by Theorem \ref{thm:sHTOD}.  Since $\varphi$ is $(m+1)-$harmonic (by Theorem \ref{main:thm}) we have by Lemma \ref{Almansi:l} and \eqref{martin} that
\begin{align*}
u(x)=\delta(x)^{\sigma-1}\sum_{k=0}^{m}\ \int_{\partial B}M_{k+1}(x,\theta)\ g_k(\theta)\ d\theta
=\sum_{k=0}^{m}\ \int_{\partial B}M_{k+\sigma}(x,\theta)\ g_k(\theta)\ d\theta
\end{align*}
for some uniquely determined functions $g_k\in C(\partial B)$.  Finally, if $\psi:=\delta^t u$ for some $t>-\sigma$, then $\psi$ is $(s+t)$-harmonic in $B$, by Proposition \ref{lem:sharm-intro}.
\end{proof}

For the proof of Lemma \ref{explicit-edenhofer}, we recall the differential recurrence formula given in \cite[Lemma 3.1]{AJS16b} or \cite[Lemma 6.1]{AJS16} (note that in \cite{AJS16b,AJS16} differentiation is taken in $x$ rather than in $y$), i.e. we have for $s>1$
\begin{equation}\label{green-recurrence}
(-\Delta)_y G_{s}(x,y)=G_{s-1}(x,y)-4(s-1) k_{N,s}\,P_{s-1}(x,y),	\qquad \ x,y\in\R^N,\ x\neq y,
\end{equation}
where, for $x,y\in \R^N$, 
\begin{align*}
P_{s-1}(x,y)\ :=\ \frac{\delta^{s-1}(x)\delta^{s-2}(y)(1-|x|^2|y|^2)}{{[x,y]}^N},\qquad [x,y]\ :=(|x|^2|y|^2-2x\cdot y+1)^\frac{1}{2}.
\end{align*}

\begin{proof}[Proof of Lemma \ref{explicit-edenhofer}]
Fix $x\in B$ and $\theta\in\partial B$, and note that
\[
E_{m,s}(x,\theta)=\omega_N^{-1}\delta(x)^s|x-\theta|^{-N}= m_{s}M_s(x,\theta)= m_{s}\lim\limits_{B\ni y\to \theta} \frac{G_s(x,y)}{\delta^s(y)}=m_{s}\dn^{s}G_s(x,\theta).
\]
and \eqref{comp1a} follows, see \eqref{expE}.  Furthermore, if $m\geq 1$, then since $\dn^{s-2}G_{s-1}(x,\cdot)=0$ on $\partial B$ and 
\begin{align}\label{consta}
4(s-1)k_{N,s}=\frac{2k_{N,1}}{m_{s-1}}=(m_{s-1}\omega_N)^{-1},
\end{align}
a direct computation using \eqref{green-recurrence} yields
\begin{align*}
m_{s-1}&\dn^{s-2}[\Delta G_s(x,\cdot)](\theta)=m_{s-1}\dn^{s-2}\Big[-G_{s-1}(x,\cdot)+4k_{N,s}(s-1)\frac{\delta(x)^{s-1}\delta^{s-2}(1-|x|^2|\cdot|^2)}{[x,\cdot]^N}\Big](\theta)\nonumber\\
&=\frac{1}{\omega_N}\frac{\delta(x)^{s}}{|x-\theta|^N}=E_{m,s}(x,\theta),
\end{align*}
which implies \eqref{comp1}.  On the other hand, by Subsection \ref{remarks-derivative}, we have
\begin{align*}
\dn^{s-1}P_{s-1}(x,\cdot)&=\delta^{s-1}(x)\dn^{m}\Big[\delta^{m-1}\frac{(1-|x|^2|\cdot|^2)}{{[x,\cdot]}^N}\Big]\notag\\
&=\delta^{s-1}(x) \sum_{k=0}^{m}\dn^{k}\delta^{m-1}\dn^{m-k}\frac{(1-|x|^2|\cdot|^2)}{{[x,\cdot]}^N}=\delta^{s-1}(x) \dn^1\frac{(1-|x|^2|\cdot|^2)}{{[x,\cdot]}^N},
\end{align*}
where
\begin{align*}
\dn^{1}&\Big[\frac{(1-|\cdot|^2|x|^2)}{[x,\cdot]^N}\Big](\theta)
=-\lim_{y\to\theta}\frac{\partial}{\partial|y|^2}\Big[\frac{(1-|y|^2|x|^2)}{[x,y]^N}\Big]=-\frac{N}{2}\frac{\delta(x)}{|x-\theta|^{N+2}}-\frac{4-N}{4|x-\theta|^N}
\end{align*}
Hence, using \eqref{consta},
\begin{align*}
m_{s-1}\dn^{s-1}[-\Delta G_s(x,\cdot)](\theta)&= \frac{1}{\omega_N}\frac{\delta(x)^{s-1}}{|x-\theta|^N}+\frac{1}{2\omega_N}\delta(x)^{s-1}\frac{2|x|^2|x-\theta|^2+N\delta(x)(|x|^2-x\cdot \theta)}{ |x-\theta|^{N+2}}\notag\\
&=\frac{\delta(x)^{s-1}}{4\omega_N|x-\theta|^{N+2}}\Big( (4-N)\delta(x) |x-\theta|^2+N\delta(x)^2\Big),
\end{align*}
and \eqref{complement} follows by \eqref{expE}.
\end{proof}
\begin{proof}[Proof of Corollary \ref{hopf:lemma}]
By \cite[Theorem 1.1]{AJS16b}, we have that $u(x)=\int_B G_{s}(x,y)f(y)\ dy$ for $x\in \R^N$. Let $z\in\partial B$, then, by \eqref{comp1}, \eqref{comp1a}, and dominated convergence,
\begin{align*}
\frac{m_{s-1}}{m_s}\dn^{s-2} \Delta u(z) = \frac{m_{s-1}}{m_s}\int_B \dn^{s-2} \Delta (G_{s}(\cdot,y))(z)f(y)\ dy
=\int_B M_{s}(y,z)f(y)\ dy= \dn^s u(z).
\end{align*}
Equation \eqref{s:eq} follows similarly.
\end{proof}

\begin{remark}\label{conv:rem}
The convergence of the Green function $G_s$, the Martin kernel $M_s$, the Edenhofer kernels $E_{k,s}$, and the corresponding solutions as $\sigma\to 1^-$ is well behaved, in the sense that the (pointwise) limits exist and the resulting function is also a solution.  This can be easily verified (for suitable data) in virtue of Theorem \ref{main:thm} and the dominated convergence theorem (see also \eqref{comp1a}).  The convergence of the Poisson kernel $\Gamma_\sigma$ to the Poisson kernel for the Laplacian as $\sigma\to 1^-$ seems to be well known, but we could not find a precise reference (see \cite[footnote on page 121]{L72}).

Observe, nevertheless, that the limit as $\sigma\to 0^+$ may be more delicate for some kernels. We show this with a simple example: let $N=1$, $\sigma\in(0,1)$, $s=1+\sigma$, and $E_{m-1,s}$ given by \eqref{expE}; then
\begin{align*}
 u_s(x):=\int_{\partial B} E_{m-1,s}(x,y)\ dy=E_{m-1,s}(x,-1)+E_{m-1,s}(x,1)=(1-x^2)^{s-2}=(1-x^2)^{\sigma-1}
\end{align*}
is a solution (by Theorem \ref{main:thm}) of $(-\Delta)^s u_s = 0$ in $B$ satisfying $D^{\sigma-1} u_s(z) = 1$ and $D^{\sigma} u_s(z) = 0$ for $z\in\partial B$.
If $\sigma\to 0$ (\emph{i.e.}, if $s\to 1^+$), then $u_s(x)\to (1-x^2)^{-1}$, which is \emph{not} harmonic in $B$. Note that $u_1\not\in L^1(B)$ and that the extra boundary condition ($D^{\sigma-1} u_s(z) = 1$) required in the higher-order case ($s\in(1,2)$) is \emph{incompatible} with problems of lower order ($s=1$).  
 On the other hand, if $\sigma\to 1^-$ (\emph{i.e.}, if $s\to 2^-$), then $u_s(x)\to 1$ pointwisely for $x\in B$, which is, in fact, a solution of 
 \begin{align*}
 (-\Delta)^2 u_2 = 0\quad \text{ in $B$},\qquad D^{0} u_2(z) = u_2(z) = 1,\ \ D^{1} u_2(z) = \frac{1}{2}\partial_\nu 1 = 0\quad \text{ for }z\in\partial B.
 \end{align*}

\end{remark}

\begin{remark}\label{e:s:1sigma}
If $v\in \cL^1_{\sigma}\cap C^{\sigma+\alpha}(B_{r}\backslash\overline{B})$ for some $r>1$ and $f\in C^\alpha(\overline{B})$, $g\in C^{1+\alpha}(\partial B)$, and $h\in C(\partial B)$. Then the function $u:\R^N\to\R$ given by $u=v$ in $\mathbb R^N\backslash\overline{B}$ and
\begin{align*}
u(x)&:=\int_B G_{1+\sigma}(x,y)f(y)\ dy
+\int_{\R^N\backslash\overline{B}}\Gamma_\sigma(x,y) v(y)\ dy\\
&+\int_{\partial B} {E_{1,1+\sigma}}(x,z)\Big(h(z)-\gamma_{N,\sigma}\int_{\mathbb R^N\backslash\overline{B}}\frac{v(y)-v(z)}{(|y|^2-1)^{\sigma}|z-y|^N}\ dy\Big)\ dz
+\int_{\partial B}{E_{0,1+\sigma}}(x,z)g(z)\ dz\nonumber
\end{align*}
for $x\in B$,
is a pointwise solution of
 \begin{align}\label{claim}
(-\Delta)^{1+\sigma}u = f\ \text{ in }B,\quad u=v\ \text{ in }\R^N\setminus\overline{B},\quad \widetilde\dn^{\sigma-1}u = g\ \text{ on }\partial B,\quad \widetilde\dn^{\sigma}u = h\ \text{ on }\partial B,
\end{align}
where $\widetilde D$ is a suitable extension of the trace operator to functions with nonzero values at $\partial B$; this extension is given by
\begin{align*}
 \widetilde D^{\sigma-1}u(z)&=\lim_{\substack {x\to z\\ x\in B}}\delta(x)^{1-\sigma}(u(x)-\lim_{\substack{y\to z \\ y\in \R^N\backslash B}}u(y)),\\
 \widetilde D^{\sigma}u(z)&=\lim_{\substack {x\to z\\ x\in B}}\frac{\partial}{\partial|x|^2}\Big[\delta(x)^{1-\sigma}(u(x)-\lim_{\substack{y\to z \\ y\in \R^N\backslash B}}u(y))\Big],\\
\end{align*}
here the limits are always meant in the normal direction. Then, since $v\in \cL^1_{\sigma}\cap C^{\sigma+\alpha}(B_{r}\backslash\overline{B})$,
\begin{align}\label{long:comp}
{\varphi(z)}:={\gamma_{N,\sigma}}\int_{\mathbb R^N\backslash\overline{B}}\frac{v(y)-v(z)}{(|y|^2-1)^{\sigma}|z-y|^N}\ dy<\infty \qquad \text{ for }z\in\partial B\quad {\text{ and }\quad \varphi\in C(\partial B).}
\end{align}
This estimate can be shown by direct (but lengthy) computations and, to keep this paper short, we do not give the details here. {Assuming \eqref{long:comp}}, then
\begin{align*}
\widetilde \dn^{\sigma-1}\Bigg(\int_{\R^N\backslash\overline{B}}\Gamma_\sigma(x,y) v(y)\ dy-\int_{\partial B} {E_{1,1+\sigma}}(x,z){\varphi(z)}\ dy\ dz\Bigg)=0
\end{align*}
(by continuity) and
\begin{align*}
&\widetilde \dn^{\sigma} \Bigg(\int_{\R^N\backslash\overline{B}}\Gamma_\sigma(\cdot,y) v(y)\ dy-\int_{\partial B} {E_{1,1+\sigma}}(\cdot,w){\varphi(w)}\ dw\Bigg)(z)=\\
&={\varphi(z)}-m_{1+\sigma}\dn^{\sigma}\Bigg(\int_{\partial B} M_{1+\sigma}(\cdot,w){\varphi(w)}\ dw\Bigg)(z)={\varphi(z)}-\lim_{x\to \theta}\ch{\varphi(x)}={\varphi(z)}-{\varphi(z)}=0,
\end{align*}
where $\cH$ is the harmonic extension given in \eqref{GHHs}.  Therefore, \eqref{claim} follows from Theorem~\ref{thm:sHTOD}, \cite[Theorem 1.1]{AJS16b}, and \cite[Lemma 2.5]{FW16}.
\end{remark}

\appendix

\section{Existence of Green functions}

To show Theorem \ref{even:thm2} and for completeness, we guarantee in this appendix the existence of a Green function in general smooth domains. Although the strategy we follow here is standard, the higher-order fractional setting requires a special care; for instance, the pointwise definition of $(-\Delta)^s u=(-\Delta)^m(-\Delta)^\sigma u$ needs in particular that $u\in\cL^1_\sigma$, but the fundamental solution does not belong to this space in general (see below). To circumvent this difficulty, the use of distributional and weak solutions together with partial integration results under varying assumptions is necessary. For the reader's convenience we include the details here. Recall (\emph{e.g.}, from \cite[Theorem 5.10]{AJS16}) that, for any order $s>0$ and any dimension $N\in \N$, the \emph{fundamental solution} of $(-\Delta)^s$ is given by $F_{N,s}:\R^N\setminus\{0\}\to \R$, 
\begin{align}\label{FNs}
F_{N,s}(x):=\left\{\begin{aligned}
&%\kappa_{N,s}
\frac{\Gamma(\frac{N}{2}-s)}{\pi^{\frac{N}{2}}4^{s}\Gamma(s)}\ \ |x|^{2s-N} && \quad\text{ for $s-\frac{N}{2}\not\in\N_0$,}\\
&\frac{2^{1-2s}(-1)^{s+1-\frac{N}{2}}}{\pi^{\frac{N}{2}}\Gamma(s-\frac{N}{2}+1)\Gamma(s)}%\kappa_{N,s}
\ \ |x|^{2s-N}\ln|x| && \quad\text{ for $s-\frac{N}{2}\in\N_0$.}
\end{aligned}
\right.
\end{align}

\begin{lemma}\label{corrector-term}
Let $\Omega\subset \R^N$ be an open bounded set with smooth boundary. Then, for any $y\in \Omega$ there is a unique function $H(\cdot,y)\in H_{loc}^s(\R^N)\cap C^s(\R^N)\cap C^{\infty}(\Omega)$ solving
\begin{align}\label{H}
(-\Delta)^s H(\cdot,y)=0\quad \text{ in }\Omega,\qquad H(x,y)=F(x-y)\quad \text{ for }x\in \R^N\backslash \Omega
\end{align}
in the sense of distributions, that is, 
\begin{align*}
\int_{\R^N} H(x,y)(-\Delta)^s\phi(x)\ dx=0\qquad \text{ for all } \phi\in C^{\infty}_c(\Omega),\ y\in \Omega.
\end{align*}
Moreover, $H(x,y)=H(y,x)$ for $x,y\in \Omega$.
\end{lemma}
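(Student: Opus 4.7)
The strategy is a ``subtract the singularity'' argument that recasts the construction of $H(\cdot,y)$ as a standard variational problem on $\mathcal H^s_0(\Omega)$. Fix $y\in\Omega$ and pick a cutoff $\chi_y\in C^\infty_c(\Omega)$ with $\chi_y\equiv1$ on a neighborhood of $y$. Define $\Phi_y:=(1-\chi_y)F(\,\cdot-y)$. Then $\Phi_y\in C^\infty(\R^N)\cap\mathcal L^1_\sigma$, $\Phi_y=F(\,\cdot-y)$ on $\R^N\setminus\Omega$, and we look for $H(\cdot,y)=\Phi_y+w_y$ with the corrector $w_y\in\mathcal H^s_0(\Omega)$. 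The boundary condition is then automatic (since $w_y$ vanishes outside $\Omega$), and the $s$-harmonicity of $H(\cdot,y)$ in $\Omega$ amounts to finding $w_y\in\mathcal H^s_0(\Omega)$ such that
\[
\mathcal E_s(w_y,\phi)\;=\;-\int_{\R^N}\Phi_y\,(-\Delta)^s\phi\;dx\qquad\text{for all }\phi\in C_c^\infty(\Omega),
\]
where Lemma~\ref{ibyp} identifies $\int w_y(-\Delta)^s\phi\,dx$ with $\mathcal E_s(w_y,\phi)$ and the RHS is finite by Lemma~\ref{decay-s-smooth} and $\Phi_y\in\mathcal L^1_s$. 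Coercivity of $\mathcal E_s$ on $\mathcal H^s_0(\Omega)$ from the Poincar\'e inequality \cite[Proposition 3.3]{AJS16a} yields a unique $w_y$ via Lax--Milgram.

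Having constructed $w_y$, interior regularity for higher-order fractional Laplacians (via \cite[Propositions 2.5--2.7]{S07}) gives $w_y\in C^\infty(\Omega)$, hence $H(\cdot,y)\in C^\infty(\Omega)$. Boundary regularity for the Dirichlet problem in smooth domains (\cite[Theorem~4]{G15:2}) provides $w_y\in C^s(\R^N)$, and combined with $\Phi_y\in C^\infty(\R^N)$ this gives $H(\cdot,y)\in C^s(\R^N)\cap H^s_{loc}(\R^N)$. To verify the distributional identity $\int H(\cdot,y)(-\Delta)^s\phi\,dx=0$ on $C^\infty_c(\Omega)$, add the two pieces: the $\Phi_y$-term is computed via $\int F(\,\cdot-y)(-\Delta)^s\phi\,dx=\phi(y)$ (the defining property of the fundamental solution, \cite[Theorem 5.10]{AJS16}) together with integration by parts on $\chi_y F(\,\cdot-y)$; the $w_y$-term matches by the variational identity.

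Uniqueness is the short piece: if $H_1,H_2$ both satisfy the claim, set $v:=H_1-H_2\in H^s_{loc}(\R^N)$. Since $\Omega$ is bounded and $v$ vanishes on $\R^N\setminus\Omega$, actually $v\in H^s(\R^N)$ and therefore $v\in\mathcal H^s_0(\Omega)$. Distributional $s$-harmonicity in $\Omega$ and Lemma~\ref{ibyp} give $\mathcal E_s(v,\phi)=0$ for every $\phi\in C^\infty_c(\Omega)$; density of $C^\infty_c(\Omega)$ in $\mathcal H^s_0(\Omega)$ (smooth boundary) and the Poincar\'e inequality force $v\equiv0$. For symmetry, introduce the candidate Green function $G_\Omega(x,y):=F(x-y)-H(x,y)$. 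Given $f_1,f_2\in C_c^\infty(\Omega)$, the convolutions $u_i(x):=\int G_\Omega(x,z)f_i(z)\,dz$ belong to $\mathcal H^s_0(\Omega)$ and solve $(-\Delta)^s u_i=f_i$ weakly in $\Omega$, so
\[
\int\!\!\int G_\Omega(x,z)f_1(z)f_2(x)\,dz\,dx=\mathcal E_s(u_1,u_2)=\mathcal E_s(u_2,u_1)=\int\!\!\int G_\Omega(x,z)f_2(z)f_1(x)\,dz\,dx;
\]
the fundamental lemma of calculus of variations yields $G_\Omega(x,y)=G_\Omega(y,x)$, and since $F(x-y)=F(y-x)$ this gives $H(x,y)=H(y,x)$.

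The delicate step is the treatment of the point-mass contribution $\phi(y)$ arising from $(-\Delta)^sF(\,\cdot-y)=\delta_y$ in the variational formulation: the functional $\phi\mapsto\phi(y)$ is continuous on $\mathcal H^s_0(\Omega)$ only when $2s>N$. In the complementary regime, I expect to remove the singularity before passing to the variational formulation---i.e.\ realize $\Phi_y$ directly through a suitable regular extension of $F(\,\cdot-y)|_{\R^N\setminus\Omega}$ that avoids any $\delta_y$ term on the right-hand side---or, alternatively, to mollify $\delta_y$ to a smooth bump and obtain $w_y$ as a limit, using coercivity for uniform bounds. All remaining technical ingredients (decay of $(-\Delta)^s\phi$ at infinity, integration by parts against $\Phi_y$) are already provided by Lemmas~\ref{ibyp} and~\ref{decay-s-smooth} above.
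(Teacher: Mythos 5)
Your overall strategy---subtract from $F(\cdot-y)$ a smooth function that agrees with it outside $\Omega$, solve for a corrector in $\cH^s_0(\Omega)$, then combine---is the same conceptual route as the paper's proof. The paper replaces $F$ near $y$ by a polynomial (producing a $C^{2k+2,0}$ function $\zeta$), whereas you use a cutoff; either choice of regular extension can work.

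However, there is a genuine gap in how you set up the variational problem, and your ``delicate step'' paragraph shows you sensed it but diagnosed it wrongly. You want to apply Lax--Milgram to
\begin{equation*}
\cE_s(w_y,\phi)\ =\ -\int_{\R^N}\Phi_y\,(-\Delta)^s\phi\ dx,\qquad\phi\in C^\infty_c(\Omega),
\end{equation*}
for which you need the right-hand side to be a \emph{bounded} linear functional on $\cH^s_0(\Omega)$, not merely finite for each fixed $\phi$. You then worry that this functional contains a point mass $\phi\mapsto\phi(y)$ from $(-\Delta)^sF(\cdot-y)=\delta_y$, and propose to ``remove the singularity'' or mollify. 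This worry is a false alarm and the proposed fix is unnecessary: $\Phi_y=(1-\chi_y)F(\cdot-y)$ already vanishes identically near $y$, so it carries no singularity and no delta appears in $(-\Delta)^s\Phi_y$. The actual difficulty you should address is different and concerns the \emph{order} in which one applies derivatives: $\Phi_y\notin\cL^1_\sigma$ once $2m\geq N$ (your claim that $\Phi_y\in\cL^1_\sigma$ is false in this regime), so $(-\Delta)^m(-\Delta)^\sigma\Phi_y$ is not directly meaningful pointwise. The resolution---which the paper carries out with its $\zeta$ and which you need to do with your $\Phi_y$---is to observe that $(-\Delta)^m\Phi_y\in\cL^1_\sigma$ (it decays like $|x|^{2\sigma-N}$ at infinity), so $f:=-(-\Delta)^\sigma(-\Delta)^m\Phi_y$ is a well-defined, regular function; two integrations by parts (classical for the $(-\Delta)^m$ factor, then Lemma~\ref{ibyp}-type for $(-\Delta)^\sigma$, using the decay from Lemma~\ref{decay-s-smooth}) then give
\begin{equation*}
\int_{\R^N}\Phi_y\,(-\Delta)^s\phi\ dx\ =\ -\int_{\Omega}f\,\phi\ dx,
\end{equation*}
which is manifestly continuous on $\cH^s_0(\Omega)$. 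With $f$ in hand one invokes the existence/uniqueness of weak solutions (as in \cite[Corollary 3.6]{AJS16a}) rather than re-proving Lax--Milgram from scratch, and the regularity, uniqueness, and symmetry steps in your proposal go through. Without the computation of $f$ and the continuity of the functional, the central existence step of your argument is not justified, and your fallback plans (mollifying $\delta_y$, re-choosing $\Phi_y$) are not needed and would not converge faster to a rigorous proof than simply computing $(-\Delta)^\sigma(-\Delta)^m\Phi_y$.
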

\begin{proof}
To ease notation, let $F=F_{N,s}$ and fix $y\in \Omega$ and $k\geq s$. Since $F$ is radially symmetric and $C^\infty(\R^N\backslash\{0\})$, there is $\zeta \in C^{2k+2{,0}}(\R^N)$ (depending on $y$) such that $\zeta=F(\cdot-y)$ in $\R^N\backslash \Omega$ and $\|\zeta\|_{C^{2k+2{,0}}(K)}<\infty$ for any compact set $K\subset \R^N$ (this follows directly from polynomial approximation of $F$ in a small neighborhood of $y$). Then $(-\Delta)^m\zeta\in \cL_\sigma^1\cap C^{2(k+1-m),0}(\R^N)$ and $f:=-(-\Delta)^\sigma(-\Delta)^m\zeta \in C^{2(k-m)+\alpha}(\R^N)$ for some $\alpha>0$, by an iteration of \cite[Proposition 2.7]{S07}. Furthermore, by \cite[Corollary 3.6]{AJS16a}, there is a unique weak solution $w\in \cH_0^s(\Omega)$ of $(-\Delta)^s w = f$ and $w\in C^s{(\R^N)}$, by elliptic regularity (see, \emph{e.g.}, \cite[Propositions 2.7 and 2.8]{S07} and \cite[Theorem 4]{G15:2}). Set $H(\cdot,y):= w+\zeta\in H_{loc}^s(\R^N)\cap C^s(\R^N)\cap C^{2k{,0}}(\Omega)$ and fix $\phi\in C^{\infty}_c(\Omega)$, then, by Lemma \ref{decay-s-smooth}, we find for every $t>0$ a constant $C(\varphi,t)>0$ such that
\begin{align*}
 |(-\Delta)^t\varphi(x)|\leq \frac{C(\varphi,t)}{1+|x|^{N+2t}}\qquad \text{ for all }x\in \R^N.
\end{align*}
Moreover, there is $c(\Omega,\zeta,k)>0$ such that $|(-\Delta)^k\zeta(x)|\leq c(\Omega,\zeta,k)|x|^{2(s-k)-N}$ for every $k\in\{0,\ldots,m\}$, in particular 
$|(-\Delta)^m\zeta(x)(-\Delta)^{\sigma}\phi(x)|\leq \widetilde C \frac{|x|^{2\sigma-N}}{1+|x|^{N+2\sigma}}$ for all $x\in\R^N$ and for some $\widetilde C>0$. 
Therefore, integrating by parts,
\begin{align*}
\int_{\R^N}\zeta(-\Delta)^{m}(-\Delta)^{\sigma}\phi\ dx=\int_{\R^N}(-\Delta)^{m}\zeta(-\Delta)^{\sigma}\phi\ dx=-\int_{\R^N}f\phi\ dx.
\end{align*}
By Lemma \ref{ibyp}, we have that $\int_{\R^N}w(x)(-\Delta)^{s}\phi(x)\ dx=\cE_{s}(w,\phi)$.  Thus
\begin{align*}
&\int_{\R^N}H(x,y)(-\Delta)^{s}\phi(x)\ dx=\cE_{s}(w,\phi)-\int_{\R^N}f(x)\phi(x)\ dx=0.
\end{align*}
Since $\varphi$ was taken arbitrarily, we have that \eqref{H} holds in the sense of distributions. We now argue uniqueness: let $v\in H^s_{loc}(\R^N)\cap C^{\infty}(\Omega)\cap C^s(\R^N)$ be another distributional solution of \eqref{H}. Then $v-H(\cdot,y)=0$ in $\R^N\setminus \Omega$ and, integrating by parts, $v-H(\cdot,y)\in \cH^s_{0}(\Omega)$ is $s$-harmonic in the weak sense. Then $v\equiv H(\cdot,y)$, by uniqueness of weak solutions (see \cite[Corollary 3.6]{AJS16a}).  Finally, since $F(x-y)=F(y-x)$ for all $x,y\in \R^N$, $x\neq y$, it follows that $H(x,y)=H(y,x)$, by uniqueness.
\end{proof}

Let $G_{\Omega}:\R^N\times \R^N\setminus \{(x,y)\in \Omega\times\Omega: x= y\}\to \R$ be given by 
\begin{align}\label{GOmega:def}
G_{\Omega}(x,y)=F_{N,s}(x-y)-H(x,y)\qquad \text{if $(x,y)\in (\Omega\times \R^N)\cup (\R^N\times\Omega)$}
\end{align}
and $G_{\Omega}(x,y)=0$ otherwise, where $H$ is the unique function given by Lemma \ref{corrector-term}.

\begin{prop}\label{existenceuniqueness}
Let $\Omega\subset\R^N$ be an open bounded set with smooth boundary. Then $G_\Omega$ is a Green function of $\Ds$ in $\Omega$.
 Moreover, if $f\in C^{\alpha}_c(\Omega)$, $\alpha\in(0,1)$, and 
\begin{align}\label{u7}
u=\int_{\Omega}G_{\Omega}(\cdot,y)f(y)\ dy,	 
\end{align}
then $u\in \cH^s_0(\Omega)$ is the unique weak solution of $(-\Delta)^su=f$ in $\Omega$ with $u=0$ on $\R^N\setminus \Omega$.	
\end{prop}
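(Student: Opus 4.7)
The strategy is to verify that $u$ defined by \eqref{u7} equals the unique weak solution of the Dirichlet problem, which establishes both assertions of the proposition at once. I plan to check three properties of $u$: (i) $u\equiv 0$ on $\R^N\setminus\Omega$; (ii) $u$ satisfies $(-\Delta)^s u = f$ distributionally in $\Omega$; (iii) $u\in\cH^s_0(\Omega)$. Together with the existence and uniqueness of the weak solution supplied by \cite[Corollary 3.6]{AJS16a}, these force $u$ to coincide with that weak solution, and the Green function property of $G_\Omega$ then follows immediately.

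Property (i) is immediate: for $x\in\R^N\setminus\Omega$ and $y\in\Omega$, Lemma \ref{corrector-term} gives $H(x,y)=F_{N,s}(x-y)$, so $G_\Omega(x,y)=0$, and Fubini yields $u(x)=0$. For (ii) I would fix $\varphi\in C^\infty_c(\Omega)$ and apply Fubini to compute
\[
\int_{\R^N}u\,(-\Delta)^s\varphi\,dx\ =\ \int_\Omega f(y)\int_{\R^N} G_\Omega(x,y)\,(-\Delta)^s\varphi(x)\,dx\,dy.
\]
The inner integral splits as $\int F_{N,s}(x-y)(-\Delta)^s\varphi(x)\,dx-\int H(x,y)(-\Delta)^s\varphi(x)\,dx$. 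The second summand vanishes by Lemma \ref{corrector-term} (distributional $s$-harmonicity of $H(\cdot,y)$), and the first equals $\varphi(y)$ by the defining property of the fundamental solution $F_{N,s}$; integrability is supplied by the decay bound in Lemma \ref{decay-s-smooth} paired with the locally integrable singularity of $F_{N,s}$ at the origin.

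Property (iii) is the main technical step. My plan is to write $u=w-H_f$, where $w(x):=\int_\Omega F_{N,s}(x-y)f(y)\,dy$ is the Riesz potential of $f\chi_\Omega$ and $H_f(x):=\int_\Omega H(x,y)f(y)\,dy$. Since $H(x,y)=F_{N,s}(x-y)$ for $x\in\R^N\setminus\Omega$, the two contributions cancel outside $\Omega$, confirming that $u$ is supported in $\overline\Omega$. The Riesz potential $w$ lies in $H^s_{loc}(\R^N)$ by a standard Fourier computation, using that $f\chi_\Omega\in L^2(\R^N)$ is compactly supported. For $H_f$ I would unwind the constructive decomposition from the proof of Lemma \ref{corrector-term}: there $H(\cdot,y)=\zeta_y+w_y$ with $\zeta_y$ a compactly modified cutoff of $F_{N,s}(\cdot-y)$ of class $C^{2k+2,0}(\R^N)$ and $w_y\in\cH^s_0(\Omega)$; integrating against $f$ and controlling the $y$-dependence via the smoothness and compact support of $f$ yields $H_f\in H^s(\R^N)$. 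Subtracting, $u\in H^s(\R^N)$ is supported in $\overline\Omega$, hence $u\in\cH^s_0(\Omega)$.

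Finally, by Lemma \ref{ibyp} the distributional identity from (ii) upgrades to the weak formulation $\cE_s(u,\varphi)=\int f\varphi$, so uniqueness from \cite[Corollary 3.6]{AJS16a} identifies $u$ with the weak solution. The hardest step is the uniform control of the decomposition from Lemma \ref{corrector-term} needed to conclude $H_f\in H^s(\R^N)$; once this is established, the remainder of the argument is a routine assembly of the pieces above.
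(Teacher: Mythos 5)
Your proposal follows the same overall strategy as the paper: decompose $G_\Omega(x,y)=F_{N,s}(x-y)-H(x,y)$, prove the distributional identity $\int_{\R^N}u\,(-\Delta)^s\varphi\,dx=\int_\Omega f\varphi\,dx$ via Fubini together with the distributional $s$-harmonicity of $H(\cdot,y)$ from Lemma~\ref{corrector-term}, establish $u\in\cH_0^s(\Omega)$, and then conclude by combining Lemma~\ref{ibyp} with the uniqueness of weak solutions \cite[Corollary 3.6]{AJS16a}. Steps (i), (ii), and the final assembly are correct and agree with what the paper does.

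The one genuine gap is in step (iii), namely the claim that the Riesz potential $w=F_{N,s}*(f\chi_\Omega)$ belongs to $H^s_{loc}(\R^N)$ ``by a standard Fourier computation''. This is not standard once $2s\geq N$, a case which is certainly permitted here since $s>0$ is arbitrary: $\widehat{F_{N,s}}$ is then a finite-part regularization of $|\xi|^{-2s}$ rather than a locally integrable function, the logarithmic branches $s-\tfrac{N}{2}\in\N_0$ of \eqref{FNs} add a further layer, and one cannot pass from a (formal) global Fourier estimate to the local statement $H^s_{loc}$ simply by cutting off, since multiplication by a cutoff does not interact in any useful way with the Fourier side. The paper's proof avoids all of this: it establishes the stronger fact $u_1=F_{N,s}*(f\chi_\Omega)\in C^{2s+\alpha}(\R^N)\subset H^{2s}_{loc}(\R^N)$ by iterating the relation $-\Delta F_{N,s}=F_{N,s-1}$ with a polynomial remainder $R_s$ (via \cite[Lemma 5.9]{AJS16}) to reduce to $(-\Delta)^m u_1=F_{N,\sigma}*f+\int_\Omega R_s(\cdot-y)f(y)\,dy$, and then invokes classical H\"older regularity of the order-$\sigma$ Riesz potential of a bounded compactly supported density (\cite[Proposition 2.8]{S07}, or \cite{L72}). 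You should replace the Fourier computation with this potential-theoretic bootstrap, or else restrict to small $s$. The other hard point you single out---uniform-in-$y$ control of the decomposition from Lemma~\ref{corrector-term} needed to conclude $H_f\in H^s_{loc}(\R^N)$---is real, and the paper is itself terse there (``since $f$ has compact support, Lemma~\ref{corrector-term} implies...''), so acknowledging it rather than detailing it is reasonable.
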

\begin{proof}
Clearly, $u=0$ on $\R^N \setminus \Omega$ when defined as in \eqref{u7}. By \eqref{GOmega:def}, we have that $u=u_1-u_2$ in $\R^N$, where
\begin{align*}
u_1(x):=\int_{\Omega} F_{N,s}(x-y)f(y)\ dy\qquad \text{ and }\qquad u_2(x):=\int_{\Omega} H(x,y)f(y)\ dy\qquad \text{ for }\ x\in \R^N. 
\end{align*}
By \cite[Corollary 5.16]{AJS16}, we have that $u_1$ is a distributional solution of $(-\Delta)^s u_1=f$ in $\R^N$, \emph{i.e.},
\begin{align}\label{u1:dist}
 \int_{\R^N} u_1(x)(-\Delta)^s\phi(x)\ dx=\int_{\Omega} f(x)\phi(x)\ dx\qquad \text{ for all }\phi\in C^{\infty}_c(\Omega).
\end{align}
Moreover, by iterating \cite[Lemma 5.9]{AJS16}, there is a polynomial $R_s$ of degree at most $2m-N$ such that $(-\Delta)^m u_1=u_{11} + u_{12}$ in $\R^N$ with 
\begin{align*}
u_{11}=F_{N,\sigma}\ast f=\int_{\R^N}F_{N,\sigma}(\cdot-y)f(y)\ dy\qquad \text{ and }\qquad u_{12}=\int_{\Omega}R_s(\cdot-y)f(y)\ dy.
\end{align*}
Then $u_{12}\in C^{\infty}(\R^N)$ and $u_{11}\in C^{2\sigma+\alpha}(\R^N)$ (see \cite{L72} or \cite[Proposition 2.8]{S07} using that $F_{N,\sigma}\ast f\in L^{\infty}(\R^N)$, because $f\in C^{\alpha}_c(\Omega)$). In particular, $(-\Delta)^mu_1 \in C^{2\sigma+\alpha}(\R^N)$, which then implies that $u_1\in C^{2s+\alpha}(\R^N)\subset H^{2s}_{loc}(\R^N)$.
On the other hand, since $f$ has compact support, Lemma \ref{corrector-term} implies that $u_2\in C^{\infty}(\Omega)\cap H_{loc}^s(\R^N)$. Hence, $u\in H_{loc}^s(\R^N)$ and since $u=0$ in $\R^N\backslash\overline{B}$, we have that $u\in\cH_0^s(\Omega)$ and, by Lemma \ref{ibyp}, it suffices to show that
\begin{align}\label{goal:dist}
 \int_{\R^N} u(x)(-\Delta)^s\phi(x)\ dx =\int_{\Omega} f(x)\phi(x)\ dx \qquad \text{ for all }\phi\in C^{\infty}_c(\Omega).
\end{align}
Since $R_s$ is a polynomial of degree at most $2m-N$ we have, by Lemma~\ref{decay-s-smooth}, Fubini's theorem, and Lemma \ref{corrector-term}, that
\begin{align}\label{u2:dist}
\int_{\R^N} u_2(-\Delta)^s\phi\ dx = \int_{\Omega} f(y) \int_{\R^N} H(x,y)(-\Delta)^s\phi(x)\ dx\ dy=0\qquad \text{ for all }\phi\in C^{\infty}_c(\Omega).
\end{align}
Thus \eqref{goal:dist} follows from \eqref{u1:dist} and \eqref{u2:dist}, and the proof is finished.
\end{proof}

\begin{remark}\label{pointwise:rmk}
The pointwise definition of $(-\Delta)^s$ can be a delicate issue.  To be more precise, let $x\in \R^N$, $\sigma\in(0,1)$, $m\in\N$ even, $s=m+\sigma$, and consider the following three options
\begin{align*}
 (i)\ \  (-\Delta)^m(-\Delta)^\sigma u(x),\qquad (ii)\ \  (-\Delta)^\frac{m}{2} (-\Delta)^\sigma (-\Delta)^\frac{m}{2}u(x),\qquad (iii)\ \  (-\Delta)^\sigma(-\Delta)^m u(x).
\end{align*}
The adequacy of each of these alternatives depends on the problem and the properties of the solutions, in particular, on the \emph{global} regularity and the growth at infinity. A first observation is that the above three options require to be at least of class $C^{2s}$ at $x$. However, $(ii)$ and $(iii)$ additionally need a global regularity assumption such as $u\in C^{m{,0}}(\R^N)$ and $u\in C^{2m{,0}}(\R^N)$ respectively. This already restricts the kind of solutions that can be studied with definition $(iii)$. For example, consider a weak solution $v\in\cH_0^s(B)$ of the problem 
\begin{align}\label{wprob}
(-\Delta)^s v = f\quad \text{ in }B,\qquad v= 0\quad \text{ on }\R^N\backslash\overline{B}, 
\end{align}
where $f\in C^\alpha(\overline{B})$, $\alpha>0$. By regularity \cite{AJS16}, $v\in C^{s}(\R^N)$ and this is in general optimal; for instance, $\delta^s$ does not belong to $C^{s+\varepsilon}(\R^N)$ for any $\varepsilon>0$ and $\delta^s$ is a weak solution of \eqref{wprob} with $f\equiv c$ for some constant $c>0$ (see \cite[Lemma 2.2]{RS15} and Lemma \ref{ibyp}). 

Observe that $(ii)$ is the pointwise notion suggested by the scalar product \eqref{scalar:p} in $\cH_0^s(B)$. Moreover, under suitable assumptions (see \cite[Proposition B.2]{AJS16}), one can interchange some derivatives with the fractional Laplacian $(-\Delta)^\sigma$ and, in particular, $(i)$ and $(ii)$ coincide for weak solutions of \eqref{wprob}.

However, definition $(i)$ is the most restrictive in terms of \emph{growth} at infinity, since to compute $(-\Delta)^\sigma u$ a condition such as $u\in\cL^1_\sigma$ is needed. Thus, if $s$ is large with respect to $N$ (for example, if $2m\geq N$), one cannot apply $(-\Delta)^m(-\Delta)^\sigma$ to the fundamental solution $F_{N,s}$ (see \eqref{FNs}) or to the function $\zeta$ in the proof of Lemma~\ref{corrector-term}.  In this case, $(iii)$ is more adequate. Indeed, if $\sigma\neq \frac{1}{2}$ then $-\Delta F_{N,s}=F_{N,s-1}$ (\emph{cf.} \cite[Lemma 5.9]{AJS16}). Thus $(-\Delta)^m F_{N,s}=F_{N,\sigma}\in \cL^1_\sigma\cap C^\infty(\R^N\backslash\{0\})$ and $(-\Delta)^\sigma(-\Delta)^m F_{N,s}(x)$ can be computed at any $x\in \R^N\backslash \{0\}$.
%We also remark that $(iii)$ is the natural choice for problems with Navier-type boundary conditions $(-\Delta)^k u=0$ at $\R^N\backslash\overline{B}$ for $k=0,\ldots,m$, see \cite{LZ16}.

A more complicated phenomenon happens in Corollary~\ref{Ls}, where neither of the above definitions can be applied: consider some outside data $\psi\in \cL^1_s\backslash \cL_{s-1}^1$ with $\psi=0$ in $B_{r}$, $r>1$; then the function $w:=\cH_s \psi\in C^{\infty}(B)\cap C^s(B_r)\cap \cL^1_s$ is $s$-harmonic in the sense of distributions (\emph{i.e.} \eqref{dist:sol} holds); nevertheless, because of its growth at infinity, one cannot compute $(i)$ nor $(ii)$, and even $(iii)$ may fail, since $(-\Delta)^m \psi$ may not be in $\cL^1_\sigma$.  Furthermore, $w$ does not belong in general to $C^{m+1{,0}}(\R^N)$ even if $\psi\in C^\infty(\R^N)$ (see \emph{e.g.} \eqref{u:dec}, where $\varphi>0$ in $\overline{B}$ if $\psi$ is nonnegative and nontrivial, which would imply that $w\in C^s(\R^N)\backslash C^{s+\varepsilon}(\R^N)$ for any $\varepsilon>0$).

Observe that the pointwise notion of $(-\Delta)^s$ is not relevant to define solutions in the sense of distributions, since all derivatives commute with $(-\Delta)^\sigma$ for functions in $C_c^\infty(\R^N)$ (see \cite[Proposition B.2]{AJS16}). 

Finally, we mention that there is a more involved pointwise evaluation of $(-\Delta)^s$ in terms of a hypersingular integral with finite differences that can be applied directly in all the cases described above, \emph{i.e.}, for functions which are only required to be locally of class $C^{2s}$ and in $\cL_s^1$; we refer to \cite{AJS17b} for details.

\end{remark}

\subsection*{Acknowledgments}

A. Salda\~{n}a was supported by the Alexander von Humboldt Foundation (Germany) and by the Instituto Superior T\'{e}cnico (Portugal).  N. Abatangelo and A. Salda\~{n}a thank the hospitality of the Goethe-Universit\"at, Frankfurt, where this research started during a short visit. The authors thank Tobias Weth for his encouragement and Gerd Grubb for helpful discussions.

% \bibliographystyle{plain}
% 
% \bibliography{mp}

\end{document}